\def\AA{{\mathbb A}}
\def\FF{{\mathbb F}} 
\def\GG{{\mathbb G}}
\def\PP{{\mathbb P}}
\def\WW{{\mathbb W}}
\def\ZZ{{\mathbb Z}}
\newcommand{\ul}{\underline}
\def\Bscr{{\mathscr B}}
\def\Hscr{{\mathscr H}}
\def\Nscr{{\mathscr N}}
\def\Sscr{{\mathscr S}}
\def\Xscr{{\mathscr X}}
\def\Yscr{{\mathscr Y}}
\def\Zscr{{\mathscr Z}}
\def\cal{\mathcal}
\def\cA{{\cal A}}
\def\cC{{\cal C}}
\def\cD{{\cal D}}
\def\cF{{\cal F}}
\def\cG{{\cal G}}
\def\cJ{{\cal J}}
\def\cO{{\cal O}}
\def\cP{{\cal P}}
\def\Spec{\operatorname{Spec}}
\def\Ann{\operatorname{Ann}}
\def\Ext{\operatorname{Ext}}
\def\End{\operatorname{End}}
\def\Hom{\operatorname{Hom}}
\def\Proj{\operatorname{Proj}}
\def\Lie{\operatorname{Lie}}
\def\stmod{\operatorname{stmod}}
\def\0ol{{\bar 0}}
\def\1ol{{\bar 1}}
\def\2ol{{\bar 2}}
\def\ol2{{\bar 2}}
\def\3ol{{\bar 3}}
\def\4ol{{\bar 4}}
\def\5ol{{\bar 5}}
\def\6ol{{\bar 6}}
\def\7ol{{\bar 7}}
\def\8ol{{\bar 8}}
\def\9ol{{\bar 9}}
\def\P2Skly{\PP^2_{Skly}}
\def\coker{\operatorname {coker}}
\def\End{\operatorname {End}}
\def\Ext{\operatorname {Ext}}
\def\ker{\operatorname {ker}}
\def\Lie{\operatorname {Lie}}
\def\supp{\operatorname {supp}}
\def\Ann{\operatorname{Ann}}
\def\Aut{\operatorname{Aut}}
\def\dim{\operatorname{dim}}
\def\End{\operatorname{End}}
\def\Ext{\operatorname{Ext}}
\def\Hom{\operatorname{Hom}}
\def\id{\operatorname{id}}
\def\max{\operatorname{max}}
\def\min{\operatorname{min}}
\def\mod{{\sf mod}\ }
\def\Proj{\operatorname{Proj}}
\def\rep{{\sf rep\ }}
\def\Spec{\operatorname{Spec}}
\def\stmod{\operatorname{stmod}}
\def\uHom{\operatorname{\underline{Hom}}}
\def\ul1{\operatorname{\underline{1}}}
\def\stmod{\operatorname{stmod}\nolimits}
\def\fB{{\mathfrak B}}
\def\fE{{\mathfrak E}}
\def\fF{{\mathfrak F}}
\def\fL{{\mathfrak L}}
\def\fM{{\mathfrak M}}
\def\fN{{\mathfrak N}}
\def\fO{{\mathfrak O}}
\def\fg{{\mathfrak g}}
\def\fh{{\mathfrak h}}
\def\fn{{\mathfrak n}}
\def\fp{{\mathfrak p}}
\def\fq{{\mathfrak q}}
\def\ft{{\mathfrak t}}
\def\cVec{{\sf Vec}\ }
\def\dirlim{\mathop{\vtop{\baselineskip -100pt\lineskip -1pt\lineskiplimit 0pt
\setbox0\hbox{lim}\copy0\hbox to \wd0{\rightarrowfill}}}\limits}
\def\invlim{\mathop{\vtop{\baselineskip -100pt\lineskip -1pt\lineskiplimit 0pt
\setbox0\hbox{lim}\copy0\hbox to \wd0{\leftarrowfill}}}\limits}
\def\I11{{1 \kern -0.8pt \! \mbox{l}}}
\def\mumu{{\mu\kern-4.2pt\mu}}
\def\bfmu{{\mu\kern-4.2pt\mu}}
\def\2slash{\backslash \! \backslash}
\def\boxtimes{\setbox0\hbox{$\Box$}\copy0\kern-\wd0\hbox{$\times$}}
\def\Supph\operatorname{Supph}
\def\sl{\mathfrak{sl}}
\theoremstyle{definition}
\numberwithin{equation}{subsection}
\newtheorem{thm}[equation]{Theorem}
\newtheorem*{thm*}{Theorem}
\newtheorem{prop}[equation]{Proposition}
\newtheorem{definition}[equation]{Definition}
\newtheorem{example}[equation]{Example}
\newtheorem{expo}[equation]{}%exposition
\newtheorem{lemma}[equation]{Lemma}
\newtheorem*{lemma*}{Lemma}
\newtheorem{cor}[equation]{Corollary}
\newtheorem{conjecture}[equation]{Conjecture}
\newtheorem*{conjecture*}{Conjecture}
\title{A tensor-triangular property for categories of representations of restricted Lie algebras}
\author{Justin Bloom}
\date{December 2024}
\begin{document}

\maketitle

\begin{abstract}
    We define a property for restricted Lie algebras in terms of cohomological support and tensor-triangular geometry of their categories of representations.
    By Tannakian reconstruction, the different symmetric tensor category structures on the underlying linear category of representations of a restricted Lie algebra correspond to different cocommutative Hopf algebra structures on the restricted enveloping algebra. In turn this equates together the linear categories of representations for various group scheme structures.
    The tensor triangular spectrum, for representations of a restricted Lie algebra, is known to be isomorphic to the scheme of 1-parameter subgroups of the infinitesimal group scheme structure associated to the Lie algebra. Points in the spectrum of a tensor triangulated category correspond to minimal radical thick tensor-ideals, provided the spectrum is noetherian, as is known in our case of finite group schemes. When the group scheme structure changes from the Lie algebra structure, a set of subgroups can still yield points of the spectrum, but there may not be enough to cover the spectrum.
    Restricted Lie algebras satisfy our property if, for each group scheme structure, the remaining set of subgroups correspond to minimal radical thick tensor-ideals having identical Green-ring structure to that of the original Lie algebra.
    Some small examples of algebras of finite and tame representation type satisfying our property are given. We show that no abelian restricted Lie algebra of wild representation type may have our property. We conjecture that satisfying our property is equivalent to having finite or tame representation type. 
\end{abstract}

\tableofcontents

\section{Introduction}
\subsection{Overview}

In this paper we define Property PC for Jacobson's restricted Lie algebras \cite{Jac41} in terms of cohomological support for their category of restricted representations. The purpose of finding Lie algebras with this property is to illustrate how tensor-triangular geometry can be used to find constraints imposed on Green ring calculations for families of symmetric tensor categories.

We adopt the convention throughout that all representations of a restricted Lie algebra are restricted, and that $k$ is always an algebraically closed field of characteristic $p > 0$. We also adopt the convention that all finite group schemes over $k$ are affine. For a finite group scheme $G$ over $k$, we denote by $kG$ the \emph{group algebra,} also called the measure algebra, a cocommutative Hopf algebra which is dual to the coordinate algebra $k[G] = \cO(G)$ of the affine scheme $G.$ We denote $A^*$ the $k$-linear dual of a $k$-space $A$. We see any finite dimensional cocommutative Hopf algebra $A$ is $kG$ for some finite group scheme $G = \Spec A^*$.

We will review the notions of cohomological support for finite group schemes in Section \ref{background}. The work of Benson, Carlson, and Rickard \cite{BCR96} on finite groups, and later the works of Friedlander and Pevtsova \cite{FrPev07}, \cite{FrPev05}, and of Benson, Iyengar, Krause, and Pevtsova \cite{BIKP15} on finite group schemes, establishes an equivalence between the data of cohomological support and tensor-triangular support (Balmer \cite{Balmer05}) for the stable module category. Thus, for a finite group scheme $G$, a closed point $\fp \in \Proj H^*(G, k)$ gives a subcategory $\cC(\fp)$ of finite modules supported only at the singleton $\{\fp\}$, and $\cC(\fp)$ is a \emph{minimal thick $\otimes$-ideal.} What is interesting for our purposes is that when two finite group schemes both have a group algebra isomorphic to a given associative algebra $A$, there is an equivalence of $k$-linear categories of representations, but not of tensor categories. Yet the subcategories $\cC(\fp)$ (or any subcategory with support in a fixed subset of $\Proj H^*(A, k)$) remain $\otimes$-ideal independent of which $\otimes$ is chosen, so long as the one dimensional module $k$ comes from a fixed augmentation $A \to k$. This may be seen by noticing how the ring structure on $H^*(A, k) = \Ext^*_A(k, k)$ is not dependent on a choice of Hopf algebra structure, and similarly the module action on cohomology $H^*(A, M) = \Ext^*_A(k, M)$ for any $A$-module $M$. We investigate for which group schemes and which points $\fp$ the Green ring structure on $\cC(\fp)$ may be known to not change between group schemes. 

When $\fg$ is a restricted Lie algebra of dimension $r$, recall that restricted representations of $\fg$ coincide with modules over the restricted enveloping algebra $A = u(\fg)$, which has dimension $p^r.$ The algebra $A$ is canonically a cocommutative Hopf algebra by taking elements of $\fg$ to be primitive. Thus there is a canonical group scheme $\widetilde G$ with group algebra $A,$ which we call the \emph{infinitesimal group scheme} corresponding to $\fg$. Given any cocommutative Hopf algebra structure $\Delta : A \to A \otimes A,$ we have a corresponding group scheme $G$ with group algebra $A$, and a corresponding tensor product $\otimes$ for modules over $A$, which are now also representations of $G.$ We will always denote by $\widetilde\Delta, \widetilde \otimes$ the canonical Hopf algebra comultiplication and tensor product for the Lie algebra $\fg$, or equivalently the infinitesimal group scheme $\widetilde G.$ We will only consider Hopf algebra structures sharing a counit $A \to k,$ so that $k$ is a fixed $A$-module acting as the monoidal unit with respect to any $\otimes.$
\begin{definition}\label{mainProperty}
    Let $\fg$ be a restricted Lie algebra over $k$, and $A = u(\fg)$. For finite group schemes $G$ coming from a Hopf algebra structure on $A$, let $\otimes$ denote the tensor product of $G$-representations. 
    
    The algebra $\fg$ is said to satisfy \emph{Property PC} if, for any such finite group scheme $G$, and any $\fp \in \Proj H^*(A, k),$ the tensor products $\otimes, \widetilde\otimes$ induce identical Green ring structures on the ideal $\cC(\fp)$ iff the prime $\fp$ is \emph{noble} (\ref{nobleDef}) for $G$. 
\end{definition}
% Definition \ref{defpipt} of a $\pi$-point, and the definition of $\pi$-support (\ref{pisuppExpo}) is due to Friedlander and Pevtsova \cite{FrPev07}, \cite{FrPev05}. After fixing the category of representations of $\fg$, the $\pi$-support of a given representation is intrinsic only to its module structure over $u(\fg)$ and thus the intersection property of support tells us the subcategory $\cC(\fp)$ of modules with a given support is a $\otimes$-ideal, regardless of which tensor product $\otimes$ is chosen.

We include, as Proposition \ref{tautnoble}, a consequence of the work of Bendel, Friedlander, Parshall, and Suslin \cite{FrPar87}, \cite{FrPar86}, \cite{SFB97}, which tells us that all homogeneous primes of $\Proj H^*(\fg, k)$ arise from $1$-dimensional Lie subalgebras of $\fg$. The idea of homogeneous primes arising from subgroups of a group scheme is generalized to our notion of a \emph{noble} prime (\ref{nobleDef}). In this way,  Proposition \ref{tautnoble} may be restated to say that every prime is noble for the infinitesimal group scheme corresponding to the restricted Lie algebra. 

In context of Property PC, we see that deforming the comultiplication structure on the restricted enveloping algebra may change the group scheme in such a way that a given homogeneous prime of cohomology is no longer noble. It is at these \emph{ignoble} primes where we expect the Green ring structure on the subcategory $\cC(\fp)$ to always change from its original structure from the Lie algebra. This we call Property PB, which in conjunction with its converse Property PA, becomes Property PC (see \ref{PABC}). 

Theorem \ref{introKlein} below is proven in Section \ref{sectionKlein} and shows there exists an algebra of tame representation type satisfying Property PC. 
\begin{thm}\label{introKlein}
Let $\fg$ be the 2-dimensional abelian Lie algebra with trivial restriction, i.e $\fg^{[p]} = 0$, over $k$ with characteristic $p = 2$. Then $\fg$ satisfies Property PC.
\end{thm}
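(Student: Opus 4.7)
The plan is a case analysis over the finitely many finite group scheme structures on $A = u(\fg) = k[x,y]/(x^2,y^2)$. First I would enumerate the $G$ with $kG\cong A$ up to isomorphism. Since $A$ is local with $\mathrm{Rad}(A)^3 = 0$ and $\dim\mathrm{Rad}(A)/\mathrm{Rad}(A)^2 = 2$, a comparison of algebra structures narrows the candidates to: the canonical infinitesimal $\widetilde G = \alpha_2\times\alpha_2$, the mixed $\alpha_2\times(\mathbb Z/2)$, the constant Klein four $V_4 = (\mathbb Z/2)^2$, and the height-$2$ infinitesimal $\alpha_4$ (whose measure algebra is identified with $A$ via the divided-power basis $(f_1, f_2, f_1 f_2)\leftrightarrow(x,y,xy)$); reducible or separable alternatives like $\mu_4$ and $\mu_2\times\mu_2$ are ruled out on algebra grounds.

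For each $G$ on this list, I would determine the noble primes in $\Proj H^*(A,k)\cong\mathbb P^1_k$. By Proposition \ref{tautnoble}, every prime is noble for $\widetilde G$. The remaining cases have strictly smaller noble sets: three $k$-rational primes for $V_4$ (from the three order-$2$ subgroups), two for $\alpha_2\times(\mathbb Z/2)$, and one for $\alpha_4$ (from the kernel-of-Frobenius embedding $\alpha_2\hookrightarrow\alpha_4$).

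For the Green ring comparison, I would invoke the classical classification of indecomposable $A$-modules into string and band modules; the ideal $\cC(\fp)$ is generated by the one-parameter family of band modules $W_n(\fp)$. To establish Property PA at a noble prime coming from a subgroup $H\subset G$ of rank $p$, I would argue that tensor products of band modules in $\cC(\fp)$ reduce to computations in $kH$-mod, where the Green ring is insensitive to the Hopf structure: $H$ has only two indecomposables and the key identity $P\otimes P = P\oplus P$ is forced by projectivity under both the $\alpha_2$ and $\mathbb Z/2$ coproducts. To establish Property PB at an ignoble prime, I would exhibit explicit tensor products $W_n(\fp)\otimes W_m(\fp)$ versus $W_n(\fp)\widetilde\otimes W_m(\fp)$ with distinct indecomposable decompositions; the discrepancy is driven by the quadratic terms such as $x\otimes x$ in the non-canonical coproducts shifting the pattern of band summands.

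The main obstacle is Property PB: verifying non-isomorphism of Green rings at \emph{every} ignoble prime and for each non-canonical $G$. A tidy approach is to identify a structural invariant of the Green ring on $\cC(\fp)$—for instance, a specific relation among the classes $[W_n(\fp)]$ whose coefficients depend on the coproduct—that is preserved under Green ring isomorphism and distinguishes the primitive $\widetilde\Delta$ from any non-primitive $\Delta$, generalizing uniformly across all ignoble primes of $\mathbb P^1$.
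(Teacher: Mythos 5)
Your enumeration of the four group schemes and their noble loci matches the paper (X.~Wang's classification: $\GG_{a(1)}^2$, $\GG_{a(2)}$, $\GG_{a(1)}\times\ZZ/2$, $(\ZZ/2)^2$, with $1$, $2$, $3$ noble points respectively for the last three), and your plan for Property PB at ignoble primes is essentially the paper's Lemma \ref{PB'}. But there is a genuine gap in scope: Property PC is quantified over \emph{all} cocommutative Hopf algebra structures on $A=u(\fg)$ with the fixed counit --- the whole variety $\Hscr$ of \ref{considerthequotient} --- not just over the four structures up to isomorphism. Any other structure is a twist $\Delta_i^\varphi$ by an augmented automorphism $\varphi\in\Aut(A)$, and passing from your four representatives to all of $\Hscr$ requires knowing that twisting is harmless; by Lemma \ref{twisting} this comes down to the isotropy hypothesis of Lemma \ref{curtailmentLemma}, namely that every finite module with support $\{\fp\}$ is isomorphic to its twist by any automorphism fixing $\fp$. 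At $p=2$ this holds precisely because Ba{\v s}ev's classification (Theorem \ref{basevClassificationThm}) says the indecomposables in $\cC(\fp)$ are determined by their dimension; this is where the paper genuinely uses the classification. The step is not a formality: for $p>2$ the analogous reduction fails, and that failure is exactly how the paper disproves Property PA in odd characteristic (Theorem \ref{nPA}). Your proposal never addresses it.

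The second gap is the mechanism for Property PA at noble primes. You propose to ``reduce tensor products in $\cC(\fp)$ to computations in $kH$-mod'' for the rank-$p$ subgroup $H$ realizing the noble point. Restriction along $kH\hookrightarrow A$ only controls the image of $V\otimes W$ in $\stmod kH$; it does not determine the $A$-module isomorphism type of the product, and recovering the global decomposition from such local data is precisely the local-to-global obstruction discussed in \ref{ourpurpose}. Likewise the identity $P\otimes P\cong P\oplus P$ concerns only projectives and says nothing about the nonprojective summands that distinguish Green rings. The actual content of PA is the formula $V_{2n}(\fp)\otimes V_{2n}(\fp)\cong 2V_{2n}(\fp)\oplus(n^2-n)P$ for every noble $\fp$, every $n$, and every coproduct; in the paper this requires the projective-rank computation of Proposition \ref{kleinProjComp}, the semiring combinatorics of Lemma \ref{basevLemma} to handle $n\neq m$, and the explicit matrix and canonical-mono contradiction argument of Theorem \ref{nobleSquareTheorem} for the squares. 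As written, your sketch has no workable substitute for that computation.
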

Theorem \ref{intro2dimthm} below is proven in Section \ref{sectionp2lemma} and provides an algebra of wild representation type in each odd characteristic, all failing to satisfy Property PC.  
\begin{thm}\label{intro2dimthm} Let $\fg$ be the 2-dimensional abelian Lie algebra over $k$ with trivial restriction, i.e. $\fg^{[p]} = 0$. 
    If $p > 2$ then $\fg$ does not satisfy Property PA.
    In other words, there exists a group scheme $G$ as in Definition \ref{mainProperty} and a point $\fp$ which is noble for $G$, and modules $V, W \in \cC(\fp)$ such that $V\otimes W$ is not isomorphic to $V\widetilde\otimes W.$ So $\fg$ does not satisfy Property PC.
\end{thm}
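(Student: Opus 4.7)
The plan is to exhibit an explicit finite group scheme $G$, a noble prime $\fp$ for $G$, and a module $V \in \cC(\fp)$ whose self-tensor product differs under the two Hopf structures. Take $G := \ZZ/p \times \GG_{a(1)}$, with group algebra $kG = A = u(\fg) = k[x_1,x_2]/(x_1^p, x_2^p)$ obtained by declaring $g_1 := 1 + x_1$ group-like and $x_2$ primitive. The resulting coproduct $\Delta$ differs from $\widetilde\Delta$ only in the first generator: $\Delta(x_1) - \widetilde\Delta(x_1) = x_1 \otimes x_1$, while $\Delta(x_2) = \widetilde\Delta(x_2)$.

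The proper nontrivial sub-Hopf-algebras of $kG$ are $kH_1$ for $H_1 := \ZZ/p \times \{1\}$ and $kH_2$ for $H_2 := \{1\} \times \GG_{a(1)}$. By kernel-of-restriction, these correspond to noble primes $\fp_1 = (\xi_2)$ and $\fp_2 = (\xi_1)$ in $\Proj H^*(A,k) \cong \PP^1$. Both primes are also noble for the Lie algebra structure by Proposition \ref{tautnoble}. Fix $\fp := \fp_1$. Any module on which $x_1$ acts trivially factors through the Hopf quotient $A \twoheadrightarrow A/(x_1)$ (a Hopf morphism in both structures), making its tensor products automatically agree; so I take $V := \Omega_A(A/(x_1)) \cong A/(x_1^{p-1})$ via the identification $(x_1) \cong A/\Ann_A(x_1)$. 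This module lies in $\cC(\fp_1)$ by direct $\pi$-point verification (restriction along $t \mapsto x_1 + \lambda x_2$ is free precisely for $\lambda \neq 0$), and has $x_1$ acting non-trivially.

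The modules $V \otimes V$ and $V \widetilde\otimes V$ share the same underlying $k$-vector space and the same $x_2$-action, differing only by the commuting nilpotent $x_1 \otimes x_1$ added to the $x_1$-action. Single-variable invariants (the $x_i$-Jordan types, the socle dimension) coincide, since the tensor categories of cyclic-$p$-group and Frobenius-kernel representations are equivalent at rank one. To distinguish the two one must therefore compute a joint invariant of the commuting operators $(x_1, x_2)$: for example, dimensions of graded pieces $x_1^a M / (x_1^{a+1} M + x_1^a x_2 M)$ of the Loewy filtration, or the dimensions of $\Hom_A(\Omega^{-n} k, M)$ for small $n$. Concretely, it suffices to reduce to the smallest case $p = 3$, where the relevant $A$-module structures live on a $36$-dimensional space and can be compared by hand.

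The hard part is this discrepancy computation. The extra term $x_1 \otimes x_1$ introduces a genuinely diagonal entanglement between the two tensor factors that cannot be absorbed by any $A$-linear automorphism of the underlying vector space; this is formalised by a nonzero class in an obstruction group, namely the cokernel of $[-, x_1 \otimes 1 + 1 \otimes x_1]$ acting on the centralizer of $x_2 \otimes 1 + 1 \otimes x_2$ inside $\End_k(V \otimes V)$. The argument then extends to all odd primes by observing that the extra coproduct term is a uniform feature in any odd characteristic and contributes nontrivially to the $A$-action on $V \otimes V$ whenever $x_1 V \neq 0$; for $p = 2$ the term is absorbable, consistent with Theorem \ref{introKlein}.
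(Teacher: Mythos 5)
Your construction does not work, and the gap is not just that the ``discrepancy computation'' is left undone --- the claimed discrepancy is in fact an isomorphism. Your module $V=\Omega_A(A/(x_1))\cong A/(x_1^{p-1})$ is an external tensor product: $V\cong J_{p-1}\otimes_k k[x_2]/(x_2^p)$, with $x_1$ acting only on the first factor (as a Jordan block of size $p-1$) and $x_2$ only on the second (freely). Both comultiplications you are comparing decompose compatibly with the algebra factorization $A\cong k[x_1]/(x_1^p)\otimes k[x_2]/(x_2^p)$: $\widetilde\Delta$ is the tensor product of the two primitive structures, and your $\Delta$ is the tensor product of the $\ZZ/p$ structure on the first factor with the \emph{same} primitive structure on the second. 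For a tensor product of Hopf algebras, the product of external tensor products is computed factorwise, so
\[
V\otimes V\;\cong\;(J_{p-1}\otimes_{\ZZ/p}J_{p-1})\otimes_k Q,
\qquad
V\widetilde\otimes V\;\cong\;(J_{p-1}\,\widetilde\otimes\,J_{p-1})\otimes_k Q,
\]
with the same (free) second factor $Q$ in both. By the rank-one comparison (Example \ref{exGa1}: $J_i\otimes J_j\cong J_i\widetilde\otimes J_j$ for the two Oort--Tate structures on $k[t]/t^p$), the first factors are isomorphic as well, so $V\otimes V\cong V\widetilde\otimes V$. Thus no ``joint invariant,'' obstruction class, or $p=3$ hand computation can distinguish them; the sentence ``cannot be absorbed by any $A$-linear automorphism'' is exactly what would need proof, and it is false here. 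More generally, the paper never establishes (and does not need) a failure of PA at a noble point of one of the standard representatives $\GG_{a(1)}\times\ZZ/p$, $\GG_{a(2)}$, $(\ZZ/p)^2$; any module that splits as an external product along the two directions will behave as above.

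The paper's proof takes a different route that avoids this trap: the counterexample group scheme is not a classification representative but a twist $\widetilde G^\varphi$ of the infinitesimal group scheme by the augmented automorphism $\varphi(x)=x$, $\varphi(y)=y+x^2$ (nontrivial only when $p>2$). Since $\varphi$ acts trivially on $\Proj H^*(A,k)$, the point $\fp=[0:1]$ stays noble for $\widetilde G^\varphi$, and one takes $V=W=V(\fp)$, the module induced from $\langle y\rangle$ (annihilated by $y$, with $x$ a full Jordan block). Using $(M\widetilde\otimes M)^{\varphi^{-1}}=M^{\varphi^{-1}}\widetilde\otimes^{\,\varphi^{-1}\!}M^{\varphi^{-1}}$-type identities, the question reduces to comparing $pM^{\varphi^{-1}}$ with $M^{\varphi^{-1}}\widetilde\otimes M^{\varphi^{-1}}$ after restriction along the $\pi$-point $t\mapsto y$: the former has maximal Jordan block size $\tfrac{p+1}{2}$, while the latter contains a block of size $p$, giving the non-isomorphism. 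If you want to salvage your strategy you would need either a module in $\cC(\fp)$ that is genuinely entangled across the two variables (not an external product) together with an actual computation, or, more efficiently, to switch to the twisting mechanism the paper uses.
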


The restricted Lie algebra $\fg$ of \ref{introKlein} and \ref{intro2dimthm} corresponds to the infinitesimal group scheme $\GG_{a(1)}^2.$ The enveloping algebra $u(\fg)$ is isomorphic as an associative algebra also to the group algebra $k(\ZZ / p)^2$ and to the group algebra for two other distinct group schemes over $k$ up to isomorphism. The classification of Hopf algebra structures on $u(\fg) = k[x,y]/(x^p, y^p)$ was first given as a corollary of the classification of all connected Hopf algebras of dimension $p^2$, by X. Wang \cite{XWang13}.

We state Conjecture \ref{conj3}, that having wild representation type is equivalent to failing to satisfy Property PC, and provide more examples of restriced Lie algebras having wild representation type, while not satisfying Property PC, in Section \ref{sectionWild}. We have found no restricted Lie algebra failing to satisfy Property PB, leading us to Conjecture \ref{conj4}.

It is fundamental to our use of noble points that the following algebra, of finite representation type, also satisfies Property PC. This is discussed further in \ref{exGa1}.
\begin{example}\label{introGa1}
    Let $\fg$ be the one dimensional Lie algebra over $k$ with restriction $\fg^{[p]} = 0.$ Then $\fg$ satisfies Property PC. In fact, there is only one homogeneous prime in $\Proj H^*(\fg, k) = \{\fp\}$ and we find that all group schemes $G$ with $kG \cong u(\fg)$ have that $\fp$ is noble for $G$, and that they all define identical Green rings.
\end{example}

% We show our strongest result negating Property PC in Section \ref{sectionWild}.  
% \begin{thm}\label{introWildThm}
%     Let $\fg$ be a restricted Lie algebra of wild representation type. Then $\fg$ does not satisfy Property PC. 
% \end{thm}
In order to affirm that a given restricted Lie algebra $\fg$ of tame or finite representation type satisfies Property PC, we resort to using an explicit classification of all cocommutative Hopf algebra structures on the universal enveloping algebra $u(\fg).$
The work of Nguyen, Ng, L. Wang, and X. Wang, \cite{NgW23}, \cite{NWW15}, \cite{NWW16}, \cite{WW14}, \cite{XWang13}, \cite{XWang15} has classified connected and pointed Hopf algebras of small dimension. As a corollary there are many small dimensional Lie algebras $\fg$ for which $u(\fg)$ is a local ring (such $\fg$ are called unipotent), with all Hopf algebra structures on $u(\fg)$ known to be dual to one of the connected Hopf algebras of the above mentioned authors. All local Hopf algebras of order $p^2$ in characteristic $p$ are given explicitly as a corollary in \cite{XWang13} and we use this directly in Section \ref{sectionKlein} to affirm Property PC for a tame algebra over a field of characteristic $p=2$. 

Our Property PC pertains to cocommutative Hopf algebra \emph{structures} on a given finite augmented algebra, which we may think of as points of an affine algebraic set $\Hscr,$ whereas the classifications of the above mentioned authors yield only a complete set of representatives of the orbit space $\Hscr / \sim$ under the action of twisting by augmented algebra automorphisms (\ref{considerthequotient}). Our technique for proving Theorem \ref{introKlein} consists of first showing the expected behavior for modules supported at noble and ignoble points for a complete set of representatives of $\Hscr / \sim$, as first classified by X. Wang in \cite{XWang13}. 
In doing so it is shown that both the tame algebra of Theorem \ref{introKlein} and the wild algebra of Theorem \ref{intro2dimthm} satisfy a weaker version of Property PB, quantified only over a complete set of representatives of $\Hscr / \sim.$ This is extended in the tame case $p = 2$ by using that if $M$ is any finite representation supported at a point $\fp$, we find $M^\varphi \cong M$ whenever $\varphi \in \Aut(u(\fg))$ is an algebra automorphism fixing the point $\fp \in \Proj H^*(\fg, k).$

Our paper concludes with Section \ref{sectionWild}, an expos{\'e} on augmented algebra automorphisms, and on induced modules, for the restricted enveloping algebra of some Lie algebras having wild representation type. The group of (augmented) automorphisms $\Aut(u(\fg))$ acts on $\Hscr$, on the isomorphism classes $\pi_0(\rep \fg),$ and on the projective scheme $\Proj H^*(\fg, k).$
Our technique, for proving that a Lie algebra $\fg$ fails to satisfy Property PC, is to provide nontrivial elements of the quotient \[\Aut(u(\fg))^\fp/ \Aut(u(\fg))^{\pi_0(\cC(\fp))}\] of isotropy subgroups $\Aut(u(\fg))^{\fp} , \Aut(u(\fg))^{\pi_0(\cC(\fp))}$ of $  \Aut(u(\fg)),$ for a choice of point $\fp \in \Proj H^*(\fg, k).$ Given that an automorphism fixes each isomorphism class in $\pi_0(\cC(\fp)),$ it must also fix the point $\fp$. It is conjectured that the converse only fails for algebras $\fg$ of wild representation type; this is in turn informed by the conjecture that the continuous parameter for indecomposables of any tame Lie algebra is always realizable as support.

For the right choice of isotropy $\varphi \in \Aut(u(\fg))^\fp$, we find that twisting the Lie Hopf algebra structure gives a tensor product $\widetilde\otimes^\varphi$ such that $V \widetilde\otimes^\varphi W \not\cong V \widetilde\otimes W,$ where $V, W$ are a choice of modules having support $\{\fp\},$ induced from a subalgebra of $\fg$. We show how to produce such an isotropy for any restricted Lie algebra with a wild abelian Lie algebra as a direct summand. We also produce such an isotropy in odd characteristic for the Heisenberg Lie algebra, which contains a wild elementary abelian Lie subalgebra, but not as a direct summand. 
% \begin{lemma}\label{intro2dimlemma} Let $\fg$ be as in \ref{intro2dimthm}.
% In any positive characteristic,
%     there exists a complete set of cocommutative Hopf algebras up to equivalence 
%     $\Hscr/\sim$ such that $\fg$ satisfies Property PB$'(t)$ for each $t \in \Hscr / \sim$.

%     In other words, for each $t \in \Hscr / \sim$ with associated scheme and tensor product $G, \otimes,$ and for each $\pi$-point $\fp$, ignoble for $G,$
%     there exist modules $V, W \in \cC(\fp)$ such that $V\otimes W$ is not isomorphic to $V\widetilde\otimes W.$
% \end{lemma}
% For a standard choice of $\pi$-points $k[t]/t^p \to u(\fg)$, the $V$ and $W$ for Lemma \ref{intro2dimlemma} are chosen to both be the induced module of the trivial module 
% \[k \otimes_{k[t] / t^p} u(\fg).\] 
% What remains to prove Lemma \ref{intro2dimlemma} is a direct application of the classification of cocommutative Hopf algebra structures on $u(\fg)$, computed first by X. Wang \cite{XWang13}.
% To prove Theorem \ref{intro2dimthm} (negating Property PA), we also use the induced module of the trivial module, and choose $G$ to be a twisting of the Lie algebra by a special choice of isotropy automorphism for the associative algebra $u(\fg)$.
\subsection{Why Lie algebras?}\label{ourpurpose}
For our study of certain families of tensor-categories, we will review some tools from the tensor-triangular geometry of the stable module category of a group scheme in section \ref{defsection}. 
For one, we establish that subcategories supported at a subset of homogeneous primes of cohomology are closed under tensor product no matter which tensor product is chosen! Further, two modules will have tensor product a projective module if and only if they have disjoint support, and again their support and hence this property is independent of which tensor product is chosen. So by our account, comparing Green rings structures on the same underlying abelian category leads directly to tensor-triangular geometry. Many computations of the product in a tame category follow from abstract impositions from the theory of support before a product is even chosen.

Now we direct the reader to Proposition \ref{tautnoble}. This proposition states how every prime is noble for $\widetilde G$, whenever $\widetilde G$ is the infinitesimal group scheme corresponding to a restricted Lie algebra $\fg$. This is a necessary condition for the given restricted Lie algebra $\fg$ to have our Property PC. So why do we want to study deformations of finite group schemes $G$ such that every prime is noble for $G$?
The answer is a kind of local-to-global problem for tensor product structures on modules supported at only one point. 

%%briefly explain pi points better below before taking about pi support equivalence

In place of localization we consider the pullback of a  module along a \emph{$\pi$-point} for $G$, as defined by Friedlander and Pevtsova \cite{FrPev07}, \cite{FrPev05}, which is a flat map \[\alpha : k[t] / t^p \to kG\] that factors through a unipotent subgroup scheme of $G.$
When the pullback of a module $M$ to $k[t]/t^p$ modules along $\alpha$ is not projective, we say $M$ is supported by $\alpha$. It is then shown that
the cohomological support for a module $M$ over a finite cocommutative Hopf algebra $A$ is equivalent (see Sections \ref{pppp}, \ref{ttGeoSection}) to the locus of $\pi$-points $\alpha : k[t]/t^p \to A$ such that the pullback is not projective. Our methods for proving Property PC are partly an investigation into whether isomorphisms
\[(M\otimes N)\downarrow_{\alpha} \cong (M\otimes' N )\downarrow_{\alpha},\]
known to hold `locally' for each $\pi$-point $\alpha$, are enough to conclude the `global' isomorphism $M\otimes N \cong M\otimes ' N$ for two different tensor products $\otimes, \otimes'$ of modules over the algebra $A.$ Further, the definition of noble (\ref{nobleDef}) gives us a representing $\pi$-point $\alpha : k[t] / t^p \to A$ such that the restriction property holds
\[(M\otimes N)\downarrow_{\alpha} \cong M\downarrow_{\alpha} \otimes N\downarrow_{\alpha},\] since $\otimes$ may be defined on $k[t]/t^p$-modules according to a Hopf-subalgebra of $A$. Suppose $M, N$ are $A$-modules supported only at the $\pi$-point $\alpha$, which is noble for both finite group schemes $G, G'$ having an isomorphism $kG \cong kG'$ as associative algebras. Then assuming further that $kG$ is a local ring we have an automatic local-isomorphism of products because for the point $\alpha$ in the support of both $M, N$, we get 

\begin{flalign}\nonumber
    (M\otimes N)\downarrow_{\alpha} &= (M \downarrow_{\alpha})\otimes (N\downarrow_{\alpha})\\\nonumber
    &\cong(M \downarrow_{\alpha})\otimes (N\downarrow_{\alpha})\\\label{localIsoProp}
    &\cong (M \downarrow_{\alpha})\otimes' (N\downarrow_{\alpha})\\\nonumber
    &= (M\otimes' M)\downarrow_{\alpha},
\end{flalign}
and for the points not supported by $M, N$ the tensor product is projective on both sides, and hence free of the same rank. The third identity \ref{localIsoProp} is a direct application of our fundamental example \ref{introGa1}. Without automatic local-isomorphisms as a starting point, it is significantly more difficult to address how the tensor categories compare between two arbitrary group schemes $G, G'$ sharing a group algebra. A finite group scheme for which every prime, or $\pi$-point, is noble, is as such a good starting point for this kind of investigation, so Proposition \ref{tautnoble} offers an especially convenient start towards investigating Lie algebras.  
\section{Background}\label{background}
\subsection{Tensor product of representations and reconstruction}\label{reconstructionSection}
If $V, W$ are two representations of a Lie algebra $\fg$ over a field $k$, the tensor product $V\otimes_k W$ is given the structure of a $\fg$-representation, we'll call $V \widetilde\otimes W$, according to the Leibniz rule on simple tensors
\[x(v\otimes w) = xv\otimes w + v\otimes xw,\]
for $x \in \fg, v \in V, w\in W.$ If $k$ is of characteristic $p > 0$ and $\fg$ is a restricted Lie algebra of dimension $r$ in the sense of Jacobson \cite{Jac41}, then the restricted universal enveloping algebra $A = u(\fg)$ is defined, and is an associative algebra over $k$ of dimension $p^r$, such that the restricted representations of $\fg$ are equivalent to modules over $A$. In general, the products of representations, which endow the category of finite representations $\rep\fg$ with the structure of a tensor category, fibred via the usual forgetful functor
\[\cF : \rep \fg \to \cVec k,\]
are all induced from a Hopf algebra structure $A \to A \otimes A$, a map of $k$-algebras. The product $\widetilde \otimes$ comes from the canonical Hopf algebra $\widetilde \Delta : A \to A \otimes A,$ which makes $\fg$ into the primitive subspace of $P(A) \subset A$, i.e. each element $x$ of the generating set $\fg \subset A$ is mapped to $x \otimes 1 + 1 \otimes x \in A \otimes A.$ The monoidal unit $k$ is given module structure by a counit map of $k$-algebras $A \to k$. 

We recall from Etingof, Gelaki, Nikshych, and Ostrik \cite{EGNO15} some definitions and the \emph{reconstruction theorem} \ref{reconstruction} to justify how tensor category structures on $\rep \fg$ are induced from Hopf algebra structures on $A.$
\begin{definition}\cite{EGNO15}
\begin{enumerate}
    \item A $k$-linear abelian category is \emph{locally finite} if
    \begin{enumerate}[(i)]
        \item $\cC$ has finite dimensional spaces of morphisms;
        \item every object of $\cC$ has finite length,
    \end{enumerate}
and $\cC$ is \emph{finite} if in addition
    \begin{enumerate}[resume*]
        \item $\cC$ has enough projectives; and
        \item there are finitely many isomorphism classes of simple objects. 
    \end{enumerate}
    \item An object in a monoidal category is called \emph{rigid} if it has left and right duals. A monoidal category $\cC$ is called \emph{rigid} if every object of $\cC$ is rigid.
    \item Let $\cC$ be a locally finite $k$-linear abelian rigid monoidal category. The category $\cC$ is a \emph{tensor category} if the bifunctor $\otimes : \cC \times \cC \to \cC$ is bilinear on morphisms, and $\End_\cC({\bf 1}) \cong k$. 
\end{enumerate}
\end{definition}
\begin{definition}\cite{EGNO15}
Let $\cC, \cD$ be two locally finite abelian categories over $k$. \emph{Deligne's tensor product} $\cC \boxtimes \cD$ is an abelian $k$-linear category which is universal for the functor assigning, to every $k$-linear abelian category $\cA$, the category of right exact in both variables bilinear bifunctors $\cC \times \cD \to \cA.$ 

The tensor product exists, is locally finite, and is unique up to unique equivalence, see \cite[Proposition~1.11.2]{EGNO15}, or Deligne \cite{Del90}.
\end{definition}

\begin{expo}
    A finite $k$-linear abelian category $\cC$ is equivalent to the category of modules over a finite algebra, or rather a Morita equivalence class of algebras \cite[Section~1.8]{EGNO15}. Fixing an exact faithful functor $\cF : \cC \to \cVec k$ to finite dimensional vector spaces allows a finite algebra to be constructed as $\End(\cF)$. Given an algebra $A$ such that $\cC = \mod A,$ the forgetful functor $\cF$ is representable by the free module $A$. Hence, by the Yoneda lemma $\End(\cF) = \cF(A) = A$ as a vector space, and indeed as an algebra.
    In fact any such exact faithful $\cF$ has that $\cC$ is equivalent to modules over $A = \End(\cF)$, and when modeled as such $\cF : \mod A \to \cVec k$ is isomorphic to the forgetful functor. 

    Let $\cF : \cC \to \cVec k$ and $\cG : \cD \to \cVec k$ be exact faithful functors with finite $k$-linear abelian sources $\cC, \cD,$ equivalent respectively to modules over $A = \End(\cF)$ and $B = \End(\cG)$. Then the exact functors $\cC \to \cD$ relative to $\cVec k$ correspond to homomorphisms of algebras $B \to A.$ In other words, given a commutative diagram of exact functors
\[\begin{tikzcd}
	\cC & \cD \\
	& {\cVec k}
	\arrow["\Phi", from=1-1, to=1-2]
	\arrow["\cF"', from=1-1, to=2-2]
	\arrow["\cG", from=1-2, to=2-2]
\end{tikzcd}\]
the homomorphism of algebras $\Psi : B \to A$, defined by precomposition with $\Phi$, is such that $\Phi$ is the pullback of modules along $\Psi.$ 
\end{expo}

\begin{expo}\label{reconstruction}
Now we review the relevant version of Tannakian reconstruction of a Hopf algebra from a finite tensor category. Let $A$ be a finite associative algebra over $k$, the category of finite modules is a finite $k$-linear abelian category we'll call $\cC.$ It is known that $\cC \boxtimes \cC$ is equivalent to the category of $(A \otimes A)$-modules. By the previous discussion we see the tensor product of finite $k$-linear abelian categories is again finite.

Denote by 
\[\cF : \cC \to \cVec k\]
the forgetful functor. Then the composition
\[\otimes \circ (\cF \boxtimes \cF) : \cC \boxtimes \cC \to \cVec k\boxtimes \cVec k\to \cVec k\]
is equivalent, when $\cC \boxtimes \cC$ is modeled as $\mod (A \otimes A),$ to the forgetful functor.
A tensor category structure on $\cC$ making $\cF$ into a functor of tensor categories is essentially one giving an $A$-module structure to the $k$-space $V\otimes_k W$ for each pair of modules $V, W$. 
In general a tensor category structure on $\cC$ is realizable as an exact functor $\otimes : \cC \boxtimes \cC \to \cC,$
relative to $\cVec k$, together with coherence laws of associators, etc. 
By the previous discussion, these are homomorphisms $A \to A \otimes A$ of algebras with coherence laws of associators corresponding to being a bialgebra. Tensor categories being rigid by definition will in turn see that these bialgebras are indeed Hopf algebras. Thus, we have proved the reconstruction theorem for tensor category structures on a fixed category of modules. That is, the classification of Hopf algebra structures on a given finite associative algebra is equivalent to the classification of tensor category structures on its category of modules. It also follows from reconstruction that Hopf algebra structures on a fixed finite augmented algebra are equivalent to tensor category structures with a fixed choice of unit object. 
\end{expo}

\subsection{$\pi$-points and a plausibility proposition}\label{pppp}
We will first review the machinery of \emph{$\pi$-points}, defined by Friedlander and Pevtsova \cite{FrPev07}, \cite{FrPev05}, which are used in Definition \ref{nobleDef} to define when a homogeneous prime of cohomology is \emph{noble}, a fundamental notion in Definition \ref{mainProperty} of Property PC. From this machinery we conclude a basic proof for Proposition \ref{tautnoble} stated below, which is otherwise a deeper consequence of earlier work of Friedlander and Parshall \cite{FrPar87}, \cite{FrPar86}, as well as Suslin, Friedlander, and Bendel \cite{SFB97}. This shows plausibility for Property PC for any given restricted Lie algebra. Recall for a finite group scheme $G$ over $k$, that the total cohomology $H^*(G, k) = \Ext^*_G(k, k) = \bigoplus_{i \ge 0}\Ext^i_G(k, k)$, for $k$ the trivial representation, is a graded commutative algebra with the cup product (see e.g. Benson \cite{bensonI}).
\begin{prop}\label{tautnoble}
    Let $\fg$ be a finite dimensional restricted Lie algebra over $k$, and $H^*(\fg, k) = \Ext^*_{\fg}(k, k)$ the cohomology ring. Then every homogeneous prime $\fp \in \Proj H^*(\fg, k)$ is the radical ideal $\sqrt{\ker(\alpha^*)}$ for the composition
    \[\alpha^* : H^*(\fg, k) \xrightarrow{\otimes_kK} H^*(\fg_K, K) \xrightarrow{H^*(\iota)} H^*(\fh, K), \]
    induced by the inclusion $\iota : \fh \to \fg_K$ of some 1-dimensional Lie subalgebra $\fh$, restricted by $\fh^{[p]} = 0,$ of the base change $\fg_K = \fg\otimes_k K$ to a field extension $K / k.$
\end{prop}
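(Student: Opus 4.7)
The plan is to deduce the proposition from the $\pi$-point theorem of Friedlander-Pevtsova \cite{FrPev07}, \cite{FrPev05}, using crucially that the infinitesimal group scheme $\widetilde G$ attached to $\fg$ has height exactly $1$. The conclusion is the Lie-algebra specialization of the Suslin-Friedlander-Bendel identification \cite{SFB97} of the scheme of $1$-parameter subgroups with $\Proj H^*(G,k)$, already available in the height-$1$ case from Friedlander-Parshall \cite{FrPar87}, \cite{FrPar86}.

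First I would invoke the $\pi$-point theorem: the assignment $\alpha \mapsto \sqrt{\ker \alpha^*}$ descends to a homeomorphism between the space of $\pi$-equivalence classes of $\pi$-points $\alpha : K[t]/t^p \to KG$ (ranging over field extensions $K/k$) and $\Proj H^*(G,k)$. Given a homogeneous prime $\fp$, choose a representing $\pi$-point $\alpha$ over some $K/k$, and let $\fa \subseteq \fg_K$ be the abelian restricted Lie subalgebra whose enveloping algebra $u(\fa)$ realizes the unipotent abelian subgroup scheme through which $\alpha$ factors. Such a factorization exists by definition, and because $\widetilde G$ has height $1$, its unipotent abelian subgroup schemes also have height $\leq 1$, hence arise from honest Lie subalgebras of $\fg_K$.

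The main step is to replace $\alpha$ within its $\pi$-equivalence class by a $\pi$-point of the form $\beta : K[t]/t^p \to u(\fh) \hookrightarrow u(\fg_K)$, $t \mapsto x$, for some nonzero $x \in \fg_K$ with $x^{[p]} = 0$, spanning the required $1$-dimensional restricted subalgebra $\fh = Kx$. The image $y = \alpha(t)$ lies in the radical of the commutative local algebra $u(\fa)$ and satisfies $y^p = 0$, and flatness of $\alpha$ forces $y$ to have nonzero class in $\operatorname{rad}/\operatorname{rad}^2$; after enlarging $K$ so that any needed $p$-th roots exist, one rescales $y$ by a unit and modifies it modulo $\operatorname{rad}^2$ to land on a nonzero primitive element $x \in \fa$, both operations being well known to preserve $\pi$-equivalence. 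Composing $\beta$ with the base-change map then identifies $\beta^*$ with the composition $\alpha^*$ displayed in the statement, and $\sqrt{\ker \beta^*} = \fp$ is immediate from the $\pi$-point theorem.

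The principal obstacle is this replacement step, and it is exactly here that the cited work of \cite{FrPar87} and \cite{SFB97} does the heavy lifting: their homeomorphism between the cohomological support variety of $\fg$ and the projectivized nullcone $\{x \in \fg_K : x^{[p]} = 0\}$ is constructed via precisely the prescription $[x] \mapsto \sqrt{\ker(H^*(\iota) \circ (- \otimes_k K))}$ appearing in the proposition, so once $\fp$ has been located on the nullcone side via the $\pi$-point theorem, the subalgebra $\fh = Kx$ is produced automatically. Doing the replacement by hand, without invoking these results, would require analyzing $u(\fa)$ via its PBW filtration and the structure of abelian restricted enveloping algebras as truncated polynomial rings in suitable generators.
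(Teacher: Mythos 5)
Your fallback route (simply quoting the Friedlander--Parshall/Suslin--Friedlander--Bendel identification of $\Proj H^*(\fg,k)$ with the projectivized nullcone) does prove the statement, but it is exactly the ``deeper consequence'' the paper is trying to avoid: the paper's proof is deliberately elementary, reducing via \cite{FrPev05} to a $\pi$-point factoring through a quasi-elementary subgroup scheme $E_K$, then applying Milnor--Moore (Theorem \ref{liesubhopf}) together with the computation that $P(kE)$ is one-dimensional for $E\cong \GG_{a(r)}\times(\ZZ/p)^s$ to force $E\cong\GG_{a(1)}$, so that the flat map $K[t]/t^p\to KE_K$ is an isomorphism onto a Hopf subalgebra $u(\fh)$ and no normalization of the image element is ever needed. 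So if you rely on \cite{SFB97}/\cite{FrPar86} you have a correct but genuinely different (and less self-contained) proof; the issue is with the hands-on replacement step you propose as the primary argument.

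That step contains a false claim. Flatness of $\alpha$ does \emph{not} force $y=\alpha(t)$ to have nonzero class in $\rad/\rad^2$ of $u(\fa)$: take $\fa\cong\fn_2$, so $u(\fa)=K[u]/u^{p^2}$. The condition $y^p=0$ already forces $y\in(u^p)\subset\rad^2$ for \emph{every} algebra map $K[t]/t^p\to u(\fa)$, and $t\mapsto u^p$ is flat (the target is free of rank $p$ over it). In this situation your recipe of rescaling and truncating modulo $\rad^2$ has no primitive linear part to land on, yet the correct representative does exist, namely the inclusion of $\fh=K\,u^{[p]}$ --- an element invisible to $\rad/\rad^2$. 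Moreover, the assertion that modifying $y$ modulo $\rad^2$ preserves $\pi$-equivalence is established in \cite{FrPev05} for quasi-elementary group schemes in their standard truncated-polynomial coordinates, not for an arbitrary abelian unipotent $u(\fa)$ with respect to its radical filtration, so even in the case $y\notin\rad^2$ your normalization needs an argument you have not supplied. In short: either cite the nullcone identification outright (correct, but a different and heavier route than the paper's), or repair the replacement step along the paper's lines by first passing to a quasi-elementary subgroup scheme, where Milnor--Moore and the primitivity computation make the normalization problem disappear.
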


If $A$ is any algebra over $k$ and $M$ is any module over $A$, we write $A_K = A\otimes_k K$ to be the base changed algebra over $K$ and $M_K = M\otimes_k K$ to be the base changed module over $A_K$. 

\begin{definition}\label{defpipt}\cite{FrPev07}
    Let $G$ be a finite group scheme over $k$. A $\pi$-point of $G$ (defined over a field extension $K / k$) is a (left) flat map of $K$-algebras
    \[\alpha_K : K[t]/t^p \to KG\]
    which factors through the group algebra $KC_K \subset KG_K = KG$ of some unipotent abelian subgroup scheme $C_K$ of $G_K$. 

    If $\beta_L : L[t]/t^p \to LG$ is another $\pi$-point of $G$, then $\alpha_K$ is said to be a \emph{specialization} of $\beta_L$, written $\beta_L \downarrow \alpha_K$, provided that for any finite dimensional $kG$-module $M$, $\alpha^*_K(M_K)$ being free implies $\beta^*_L(M_L)$ is free.

    Two $\pi$-points $\alpha_K, \beta_L,$ are said to be \emph{equivalent}, written $\alpha_K \sim \beta_L$, if $\alpha_K \downarrow \beta_L$ and $\beta_L\downarrow \alpha_K.$
    
    The points $\alpha_K, \beta_L$ are said to be \emph{strongly equivalent} if for any module (not necessarily finite dimensional) $M$, $\alpha^*_K(M_K)$ is projective if and only if $\beta^*_L(M_L)$ is projective. It is shown that equivalence implies strong equivalence, and hence the notions coincide. 
\end{definition}

\begin{expo}\label{QEexpo}
    Denote by $\GG_{a(r)}$ the $r$th Frobenius kernel for the additive group scheme $\GG_a$ over $k$. 
    A \emph{quasi-elementary} group scheme is one in the form \[E \cong \GG_{a(r)} \times (\ZZ / p)^s.\] When $E$ is quasi-elementary, we have isomorphism of the group algebra \[kE \cong k[t_1, \dots, t_{r+s}]/(t_1^p,\dots, t_{r+s}^p), \]
    with the first $r$ variables $t_i$ dual to the basis elements $t^{p^i}$ in the coordinate algebra $k[\GG_{a(r)}] = k[t]/t^{p^r}.$
    It is shown in \cite{FrPev05} that each $\pi$-point defined over the base $k$ (originally called a $p$-point with the same equivalence relation) is equivalent to some $\alpha : k[t]/t^p \to kG$ which factors through the group algebra $kE \subset kG$ for some quasi-elementary subgroup scheme $E\subset G$. In fact the base changed statement can be shown for a $\pi$-point defined over any field extension.

    %Suppose $\alpha_K : K[t]/t^p \to KG$ is a $\pi$-point for a group scheme $G$ over $k$, with $K / k$ a field extension. Then $\alpha_K$ is also a $p$-point for the base changed group scheme $G_K$. Then there exists $\beta_K : K[t] / t^p \to KG$ a $p$-point for the finite group scheme $G_K$, with $\alpha_K \sim \beta_K$ as $p$-points for $G_K$, such that $\beta_K$ factors through some quasi-elementary subgroup scheme $E_K\subset G_K.$ But the equivalence $\alpha_K \sim \beta_K$ as $\pi$-points over $G_K$ is strictly stronger than over $G$, for nontrivial extensions $K / k$, as not all finite representations of $G_K$ are in the form $M_K$ for $M$ a finite representation of $M$ of $G.$
    %So in any case, every $\pi$-point defined over $K / k$ for $G$ is equivalent to one which factors through a quasi-elementary subgroup scheme $E_K\subset G_K.$ 

    Notice, for any finite Hopf algebra $A$ with comultiplication $\Delta :A \to A\otimes A$ we may define a restricted Lie subalgebra $P(A)\subset \Lie (A)$ the primitive subspace of $A$, i.e. $x \in A$ such that $\Delta(x)=  x \otimes 1 + 1 \otimes x.$ The universal enveloping algebra $u(P(A))$ is isomorphic to the Hopf subalgebra of $A$ generated as an associative algebra by the subspace $P(A).$
    
    If $E$ is a quasi-elementary group scheme and the group algebra $kE$ is given coordinates $t_i$ as above, direct computation shows that $P(kE)$ is one-dimensional, generated by $t_1.$ Thus, $\GG_{a(1)}$ is the only quasi-elementary group scheme with group algebra isomorphic as a Hopf algebra to a universal enveloping algebra for a restricted Lie algebra. 
 \end{expo}

\begin{expo}\label{piptsCohomology}
Now we review the relationship with cohomology. The algebra $H^*(G, k)$ is graded-commutative, meaning not necessarily commutative. However we do have that every homogeneous element is either central or nilpotent. So we write $\Proj H^*(G, k)$ to mean the space of homogeneous primes for the reduction of $H^*(G, k)$, a commutative, graded algebra. In characteristic $p = 2$ we have that homogeneous elements of any degree may survive, but for characteristic $p > 2,$ this means only the even degree elements may survive.

Denote the algebra $D = K[t]/t^p$ over $K$, an extension field of $k$. There exists a Hopf algebra structure on $D$ (in fact there are two up to isomorphism, by a theorem of Oort and Tate \cite{OT1970}) showing that the Hopf algebra cohomology $H^*(D, K)$ is also a graded-commutative algebra. 
Given a $\pi$-point $\alpha_K : D \to KG_K$ defined over $K / k$, we define the ideal $\fp(\alpha_K)$ as the radical of the kernel for the composition
\[H^*(G, k) \xrightarrow{\otimes_k K} H^*(G_K, K) \xrightarrow{H^*(\alpha_K)} H^*(D, K).\]
It is shown in \cite{FrPev07} that $\fp(\alpha_K)$ is always a homogeneous prime in $ \Proj H^*(G, k),$ that every homogeneous prime of $\Proj H^*(G, k)$ is of the form $\fp(\alpha)$ for some $\pi$-point $\alpha$ of $G$, and further, for two $\pi$-points $\alpha_K, \beta_L,$ that $\alpha_K \sim \beta_L$ coincides with the equivalence relation $\fp(\alpha_K) = \fp(\beta_L).$
\end{expo}

The last thing we need to prove Proposition \ref{tautnoble} is the following theorem of Milnor and Moore, found as \cite[Theorem~6.11]{MM65}
\begin{thm}\label{liesubhopf}
    Let $\fg$ be a finite dimensional restricted Lie algebra over $k$, and $A = u(\fg)$ the restricted enveloping algebra, a cocommutative Hopf algebra with $\fg \subset A$ the subspace of primitive elements. Then if $A' \subset A$ is any Hopf subalgebra, there exists a restricted Lie subalgebra $\fg' \subset \fg$ such that $A' = u(\fg')$ and $A'\subset A$ is the induced inclusion.
\end{thm}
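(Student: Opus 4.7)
The plan is to take $\fg' := P(A') = A' \cap \fg$ and show the canonical map $\phi : u(\fg') \to A'$ is an isomorphism of Hopf algebras. First I would verify that $\fg'$ is a restricted Lie subalgebra of $\fg$: both the bracket and $p$-th power operations on $\fg = P(A)$ are intrinsic to the associative structure of $A$, given by commutator $xy - yx$ and $p$-th power $x^p$, so they preserve the subalgebra $A'$; they also preserve primitivity by a direct coalgebra computation using cocommutativity of $\Delta$ together with the characteristic-$p$ identity $(x \otimes 1 + 1 \otimes x)^p = x^p \otimes 1 + 1 \otimes x^p$. Thus $\fg' \subset \fg$ is closed under both operations, and the universal property of $u(\fg')$ yields a Hopf algebra homomorphism $\phi : u(\fg') \to A'$ extending the inclusion $\fg' \hookrightarrow A'$.

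Injectivity of $\phi$ comes almost for free: the composite $u(\fg') \xrightarrow{\phi} A' \hookrightarrow A = u(\fg)$ is the functorial map induced by the inclusion of restricted Lie algebras $\fg' \hookrightarrow \fg$, which is injective by the Poincar\'e--Birkhoff--Witt theorem for restricted enveloping algebras: an ordered restricted basis of $\fg'$ extended to one of $\fg$ yields compatible monomial bases of $u(\fg')$ and $u(\fg)$.

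The main obstacle is surjectivity, and it is the core of the Milnor--Moore theorem in characteristic $p$. My approach would use that $A = u(\fg)$ is a \emph{connected} coalgebra (its unique simple subcoalgebra is $k \cdot 1$, since $u(\fg)$ is a local ring with unique augmentation), so $A'$, being a subcoalgebra of a connected coalgebra, is itself connected. I would then induct along the coradical filtration $C_0(A') \subset C_1(A') \subset \dotsb$, whose bottom two layers are $C_0 = k$ and $C_1 = k \oplus \fg'$, proving $C_n(A') \subset \phi(u(\fg')) + C_{n-1}(A')$ at each stage. For $x \in C_n(A')$, comultiplicativity forces $\Delta(x) - x \otimes 1 - 1 \otimes x$ to lie in $\sum_{i+j=n,\ i,j \geq 1} C_i(A') \otimes C_j(A')$, so modulo the inductive hypothesis $x$ is primitive up to an obstruction built from products of lower-filtration elements already in $\phi(u(\fg'))$. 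The delicate, characteristic-$p$-specific subtlety is that this obstruction can include a Frobenius-twisted term of the form $y^p$ with $y \in C_{\lceil n/p \rceil}(A')$, which is again primitive by the same identity used above; it is precisely the restricted $p$-th-power operation on $\fg'$ that absorbs this kind of obstruction, and hence why the \emph{restricted} enveloping algebra, not the ordinary one, is the right target. Once the induction terminates on the finite-dimensional $A'$, surjectivity follows, $\phi$ is an isomorphism of Hopf algebras, and $\fg' = \fg \cap A'$ is the promised restricted Lie subalgebra realizing $A' \subset A$ as $u(\fg') \subset u(\fg)$.
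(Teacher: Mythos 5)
Your overall architecture is reasonable, and the two easy halves are fine: $\fg' := P(A') = A' \cap \fg$ is indeed a restricted Lie subalgebra, and injectivity of $\phi : u(\fg') \to A'$ follows from the restricted PBW theorem exactly as you say. (For context, the paper does not prove this statement at all; it quotes it as Milnor--Moore \cite[Theorem~6.11]{MM65}.) The problem is the surjectivity step, which is the entire content of the theorem, and there your induction has a genuine gap: nowhere does it use the hypothesis that $A'$ is a Hopf subalgebra of the \emph{primitively generated} algebra $u(\fg)$. As written, your coradical-filtration argument would apply verbatim to any finite-dimensional connected cocommutative Hopf algebra (take it to be its own Hopf subalgebra) and would ``prove'' that it is generated by primitives, which is false in characteristic $p$. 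The paper's own running example is a counterexample: $k\GG_{a(2)} = k[x,y]/(x^p,y^p)$ with $x$ primitive and $\Delta(y) = y \otimes 1 + 1 \otimes y + \omega(x)$ is connected and cocommutative, its primitive space is spanned by $x$ alone, and the subalgebra generated by primitives has dimension $p$, not $p^2$. Concretely, the step that fails is the assertion that the obstruction $\Delta(x) - x\otimes 1 - 1\otimes x \in u(\fg')\otimes u(\fg')$ can always be absorbed using products of lower-filtration elements together with the restricted $p$-th power on $\fg'$: for the element $y$ above, the obstruction $\omega(x)$ lies in $k[x]/(x^p)\otimes k[x]/(x^p)$, yet it is not of the form $\Delta(b) - b\otimes 1 - 1\otimes b$ for any $b$ in the primitively generated part (otherwise $y - b$ would be primitive, forcing $y \in k[x]/(x^p)$), and no $p$-th power of a primitive corrects it. So the absorption claim is precisely what must be proved, and it can only come from the ambient embedding $A' \subset u(\fg)$, which your sketch never invokes.

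The standard repair is dual to your setup and is essentially how Milnor--Moore argue: being primitively generated is equivalent to a Frobenius-vanishing condition on the dual, and that condition passes to quotients, hence to Hopf subalgebras on the original side. Concretely, $u(\fg)^* = k[\widetilde G]$ is the coordinate ring of a height-one infinitesimal group scheme, so $f^p = 0$ for every $f$ in its augmentation ideal; a Hopf subalgebra $A' \subset u(\fg)$ dualizes to a Hopf-algebra quotient $k[\widetilde G] \twoheadrightarrow (A')^*$, so the same vanishing holds there, i.e.\ the closed subgroup scheme $H = \Spec (A')^*$ of $\widetilde G$ has height $\le 1$; the Cartier/Demazure--Gabriel theorem (equivalently, vanishing of the Verschiebung on a connected cocommutative Hopf algebra) then identifies $A' = kH$ with $u(\Lie H) = u(P(A'))$, which is your surjectivity. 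One could instead argue directly inside $u(\fg)$ with a PBW basis adapted to $\fg' \subset \fg$, but some use of the ambient algebra is indispensable. A smaller point: your justification that $A$ is connected as a coalgebra (``$u(\fg)$ is a local ring'') fails when $\fg$ has toral elements, e.g.\ $u(\ft) = k[x]/(x^p - x)$ is a product of $p$ copies of $k$; connectedness instead follows because the dual $k[\widetilde G]$ is local, $\widetilde G$ being infinitesimal.
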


Contrast now Proposition \ref{tautnoble} with the cohomological structure of $\pi$-points reviewed in \ref{piptsCohomology}. Identifying $R = H^*(u(\fg), k) = H^*(\fg, k)$, we claim that each prime in $\Proj R$ comes from a $\pi$-point $K[t]/t^p \to u(\fg_K)$ which is the inclusion of a Hopf subalgebra isomorphic to $K\GG_{a(1)}.$ It is not true in general that $\pi$-points are equivalent to some Hopf subalgebra inclusion, and in fact the failure of this property for general finite group schemes is what we are studying with Property PC (Definition \ref{mainProperty}), at the level of tensor categories. 

\subsubsection*{Proof of Proposition \ref{tautnoble}}
    Let $G$ be the finite group scheme with group algebra $kG = u(\fg)$ as cocommutative Hopf algebras. 
    Let $E_K$ be a quasi-elementary subgroup scheme of $G_K$. By theorem \ref{liesubhopf}, the map $KE_K \to KG_K$ is the induced map of enveloping algebras for a subalgebra of $\fg.$
    In particular, $KE_K$ is generated by its space of primitive elements. Since $E$ is quasi-elementary, by our discussion \ref{QEexpo} we have $E \cong \GG_{a(1)}$ with $KE_K \cong K[t]/t^p.$
    Every $\pi$-point is equivalent to some $\alpha : K[t]/t^p \to KG_K$ factoring through a quasi-elementary subgroup scheme of $E$ of $G$, and in this case we have shown $K[t]/t^p \to KE_K$ is an isomorphism, and $\alpha$ is a map of Hopf algebras assuming $t$ is primitive. 
\qed

\begin{expo}
    The reason we say Proposition \ref{tautnoble} shows plausibility for Property PC is as follows. In Definition \ref{mainProperty}, we have quantified Property PC over all cocommutative Hopf algebra structures on $A = u(\fg)$ for a restricted Lie algebra $\fg.$ One such structure is the Lie comultiplication and associated tensor product $\Delta = \widetilde\Delta, \otimes = \widetilde \otimes,$ defining a finite group scheme we'll call $G$. Without reviewing definitions of $\pi$-support and the categories $\cC(\fp)$ yet, it is tautological that the products $\otimes, \widetilde\otimes$ define the same Green ring structure on $\cC(\fp)$. So for $\fg$ to satisfy PC, it is necessary for each $\pi$-point to be noble for $G$ i.e. equivalent to the inclusion of a Lie subalgebra $\fh \subset \fg$ by Theorem \ref{liesubhopf}. This is what is guaranteed by Proposition \ref{tautnoble}. 
\end{expo}
\begin{expo}\label{generalizedPi}(Generalized $\pi$-points) Let $\alpha_K : K[t]/t^p \to KG_K$ be a flat map. The $\pi$-point condition, that $\alpha_K$ factors through a unipotent subgroup scheme of $G_K$ turns out to be too strong for some of our purposes. We want to argue when the radical of the kernel for the composition $H^*(\alpha_K)\circ (\otimes_kK)$ is a homogeneous prime in $\Proj H^*(G, k)$. Taking $D = K[t]/t^p,$ the cohomology ring is calculated as
\[H^*(D, K) = \Ext_D(K, K) = \begin{cases}
    K[\xi]\otimes \Lambda(\eta)& p > 2\\
    K[\zeta]& p = 2
\end{cases}\]
where $|\xi| = 2, |\eta| = 1$, and $|\zeta| = 1$. In particular, regardless of characteristic, the reduced algebra $H^*(D, K)_{\mathrm{red}}$ is an integral domain. The radical of the ideal $\ker(H^*(\alpha_K)\circ (-\otimes_kK))$ in $H^*(G, k)$ agrees with the kernel of the composition $\mathrm{red}\circ H^*(\alpha_K)\circ (\otimes_kK),$ i.e. changing the target from $H^*(D, K)$ to $H^*(D, K)_{\mathrm{red}}$. This ideal is always a homogeneous prime, and so being in $\Proj H^*(G, k)$ is equivalent to the induced map $\mathrm{red}\circ H^*(\alpha_K)\circ( \otimes_kK)$ being nonzero in some positive degree. Such nondegenerate flat maps $\alpha_K$ we may call \emph{generalized $\pi$-points,} and we denote by  $\fp(\alpha_K)$ the homogeneous prime of $\Proj H^*(G, k)$. Friedlander and Pevtsova \cite{FrPev07} show that $\pi$-points are nondegenerate in this way, i.e. that factoring through a unipotent subgroup scheme of $G_K$ is sufficient for knowing the induced map of cohomology is nonzero in some degree. It remains to be shown, for generalized $\pi$-points $\alpha_K, \beta_L$, that $\fp(\alpha_K) = \fp(\beta_L)$ if and only if $\alpha_K \sim \beta_L$, for an equivalence relation $\sim$ defined similarly in terms of detecting projectivity. However, this can be shown by repeating the methods of \cite{FrPev07} in a straightforward way, so we take it for granted. Note how it immediately follows that generalized $\pi$-points are always equivalent to some $\pi$-point. 
\end{expo}

\begin{lemma}\label{actionOnPiPts}
    Let $G$ be a finite group scheme over $k$, and let $\varphi \in \Aut(kG)$ be an augmented automorphism of the augmented algebra $kG$. Let $\alpha : K[t]/t^p \to KG_K$ be a $\pi$-point of $G$ over $K$. Then the composition $\varphi_K\circ \alpha$ is a generalized $\pi$-point of $G$ over $K$, where $\varphi_K \in \Aut(KG_K)$ is the base change. 
    If $\beta_L$ is another $\pi$-point and $\alpha_K \sim \beta_L,$ then $\varphi_K\circ \alpha_K \sim \varphi_L \circ \beta_L$ as generalized $\pi$-points.  
\end{lemma}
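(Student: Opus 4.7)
The statement has two parts: showing that $\varphi_K\circ\alpha$ is a generalized $\pi$-point, and showing that the equivalence relation is preserved under post-composition with $\varphi_K$, $\varphi_L$. The plan is to exploit two functorial consequences of $\varphi$ being an augmented automorphism of $kG$: namely, that $\varphi$ induces a module-category autoequivalence $M \mapsto M^\varphi := \varphi^*(M)$ which commutes with base change, and that it induces a graded algebra automorphism $H^*(\varphi)$ of $H^*(G, k)$.

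For the first claim, I would first check that $\varphi_K\circ\alpha$ is flat: since $\varphi_K$ is an algebra isomorphism, it is faithfully flat, and composition of flat maps is flat. For nondegeneracy in the sense of Exposition~\ref{generalizedPi}, the induced map on cohomology factors as $H^*(\alpha)\circ H^*(\varphi_K)$ after base change. Since $H^*(\varphi_K)$ is a graded automorphism of $H^*(G_K, K)$, and $H^*(\alpha)$ is nonzero in some positive degree (because $\alpha$ is a genuine $\pi$-point, per \ref{piptsCohomology}), the composition remains nonzero in some positive degree after reduction. More precisely, one finds $\fp(\varphi_K\circ\alpha) = H^*(\varphi_K)^{-1}(\fp(\alpha))$, which is a homogeneous prime since the preimage of a prime ideal under a graded automorphism is a homogeneous prime.

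For the second claim, the key identity is that for any $kG$-module $M$,
\[
(\varphi_K\circ\alpha_K)^*(M_K) \;=\; \alpha_K^*(\varphi_K^*(M_K)) \;=\; \alpha_K^*((M^\varphi)_K),
\]
using that the pullback along $\varphi$ commutes with base change from $k$ to $K$. Since $\varphi^*$ is an autoequivalence on $kG$-modules, applying the hypothesis $\alpha_K\sim\beta_L$ to the module $M^\varphi$ in place of $M$ gives that $(\varphi_K\circ\alpha_K)^*(M_K)$ is projective iff $\alpha_K^*((M^\varphi)_K)$ is, iff $\beta_L^*((M^\varphi)_L)$ is, iff $(\varphi_L\circ\beta_L)^*(M_L)$ is. Quantifying over finite $M$ yields the desired equivalence of generalized $\pi$-points. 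The only mildly subtle point, and what I would identify as the hardest step, is confirming that the equivalence relation $\sim$ is well-defined on generalized $\pi$-points and retains its characterization via detecting projectivity on finite modules; this is granted by the remark in Exposition~\ref{generalizedPi} that the methods of \cite{FrPev07} extend verbatim, so no new technical work is needed here.
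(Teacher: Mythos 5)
Your proof is correct, and the second half (post-composing with $\varphi$ preserves $\sim$ via the identity $(\varphi_K\circ\alpha_K)^*(M_K)\cong\alpha_K^*(\varphi^*(M)_K)$ and the fact that $\varphi^*$ permutes finite $kG$-modules) is exactly the paper's argument. Where you genuinely diverge is the first claim. The paper does not argue cohomologically at all: it twists the comultiplication to $\Delta'=(\varphi_K\otimes\varphi_K)\circ\Delta\circ\varphi_K^{-1}$, observes that $B=\varphi_K(KU)$ is then a local Hopf subalgebra through which $\varphi_K\circ\alpha_K$ factors, so the composite is an honest $\pi$-point of the twisted group scheme $G^\varphi$, and only then invokes the invariance of cohomology under change of Hopf structure together with \cite{FrPev07} to get nondegeneracy for $G$. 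You instead verify nondegeneracy directly from functoriality: $H^*(\varphi_K\circ\alpha_K)=H^*(\alpha_K)\circ H^*(\varphi_K)$ with $H^*(\varphi_K)$ a graded automorphism, so the reduced composite stays nonzero in positive degree and $\fp(\varphi\circ\alpha)$ is the preimage prime. Your route is more economical (no need to check that $\Delta'$ is a cocommutative Hopf structure or that $B$ is local), while the paper's route records the stronger and later-useful fact that $\varphi_K\circ\alpha_K$ is a genuine $\pi$-point for the twisted group scheme, which feeds directly into Lemma \ref{twisting}. Two small points of care in your write-up: $\fp(\alpha)$ lives in $H^*(G,k)$, so the preimage should be taken under the automorphism $H^*(\varphi)$ of $H^*(G,k)$ induced by the $k$-rational $\varphi$ (compatible with $H^*(\varphi_K)$ under base change), and your notation $M^\varphi:=\varphi^*(M)$ is the inverse of the twist $M^\varphi$ defined in \ref{isotropies}; neither affects the validity of the argument.
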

\begin{proof}
    Let $A = KG_K$ be the group algebra over $K$, with $\Delta : A \to A\otimes A$ its cocommutative Hopf algebra structure. 
    Suppose $\alpha_K : K[t]/t^p \to A$ factors through the inclusion $KU \hookrightarrow A$ of a unipotent subgroup scheme $U < G_K.$ The composition $\varphi_K\circ \alpha_K$ is automatically a flat map (since $\alpha_K$ and $\varphi_K$ are both flat). So, to show that $\varphi_K \circ \alpha_K$ is a generalized $\pi$-point, it suffices to show that there is some cocommutative Hopf algebra structure $\Delta'$ on $A$ such that $\varphi_K \circ \alpha_K$ factors through the inclusion $B \hookrightarrow A$ of a local Hopf-subalgebra $B$. Letting $G' = (\Spec A^*, \Delta')$ and $U' = \Spec B^*$, we see $U'$ is a unipotent subgroup scheme of $G'.$ 
    Since the cohomology rings and their induced maps are invariant between Hopf algebra structures, we see any such choice of $\Delta'$ and unipotent subgroup scheme $U' < G'$ will have that the induced map of cohomology from $\varphi_K \circ \alpha_K$ is nondegenerate, making $\varphi_K \circ \alpha_K$ a generalized $\pi$-point of $G$, as shown in \cite{FrPev07}. 
    
    Define $\Delta' = (\varphi_K \otimes \varphi_K) \circ \Delta \circ \varphi_K^{-1}$ (c.f. \ref{twisting}). We see $\Delta'$ is indeed a cocommutative comultiplication for a Hopf algebra structure on $A$. Further, if we define $B = \varphi_K(KU)$ the image of the group algebra, we see $B$ is a Hopf-subalgebra of $A, \Delta'$. Now $B$ is local as it is isomorphic to $KU$, and $\varphi_K \circ \alpha_K$ factors through the inclusion $B \hookrightarrow A.$ We conclude $\varphi_K \circ \alpha_K$ is a $\pi$-point for $G'$ and hence a generalized $\pi$-point for $G$ over $K$. 

    Now let $\beta_L$ be a $\pi$-point over $L / k$ such that $\alpha_K \sim \beta_L$. Then for any $kG$ module $M$ we have $\alpha_K^*(M_K)$ is projective if and only if $\beta_L^*(M_L)$ is projective.
    There is natural isomorphism of $K[t]/t^p$ modules $(\varphi_K \circ \alpha_K)^*(M) \cong \alpha_K^*(\varphi^*(M)_K),$ and similarly for $\beta_L.$ Since $\varphi^*(M)$ is a $kG$ module, we have $\alpha^*_K(\varphi^*(M)_K)$ is projective if and only if $\beta_L^*(\varphi^*(M)_L)$ is projective, and hence $\varphi_K \circ \alpha_K \sim \varphi_L \circ \beta_L.$
\end{proof}

There is a hence a well defined $\Aut(kG)$-action on $\Proj H^*(G, k)$ by
\[\varphi\cdot \fp =  \fp(\varphi \circ \alpha) \text{ for a } \pi\text{-point }\alpha\text{ such that } \fp = \fp(\alpha).\]

\subsection{Cohomological support and tt-geometry}\label{ttGeoSection}
\begin{expo}\label{pisuppExpo}
    In \ref{pppp} we reviewed how the space of $\pi$-points for a finite group scheme $G$ over $k$ is equivalent to $\Xscr(G) = \Proj H^*(G, k)$. Let $R = H^*(G, k)$, a graded-commutative algebra. For each representation $M$ of $G$, the cohomology \[H^*(G, M) = \Ext^*_G(k, M)\] has a canonical graded right-module structure over $R$ by the Yoneda splice product. 
    By the theorem of Friedlander and Suslin \cite{FS97}, $\Xscr(G)$ is a (possibly reducible) projective variety for finite group schemes $G$, and $H^*(G, M)$ gives a coherent sheaf over $\Xscr(G)$ for finite dimensional $M$. To $M$ we therefore associate a closed subvariety of \emph{cohomological support}
    \[\Xscr(G, M) = V(\Ann_R(H^*(G, M))).\]
    It is further shown by Friedlander and Pevtsova \cite{FrPev07} that this subvariety is equivalent to $\pi$-support, i.e. 
    \begin{flalign*}
        \Xscr(G, M) &= \supp_G(M)\\
        &= \left\{ [\alpha_K]\ \mid\ \alpha^*_K(M_K)\text{ is not projective}\right\},
    \end{flalign*}
    defined to range over equivalence classes of $\pi$-points $\alpha_K.$
\end{expo}

\begin{expo}\label{noteOnPiEquivalence}
    Now that we have a definition for support, we can elaborate on the equivalence relation for $\pi$-points given in Definition \ref{defpipt}. We continue our assumption that $k$ is algebraically closed, and so the closed points of $\Xscr(G)$ are all of the form $\fp(\alpha)$ for a $\pi$-point $\alpha$ of $G$, defined over the ground field $k$. The equivalence relation $\alpha \sim \beta$ between $\pi$-points $\alpha, \beta$ of $G$ over $k$ is, by definition, that for any finite module $M$, $\alpha^*(M)$ is projective if and only if $\beta^*(M)$ is projective. In practice, we may be given a finite group algebra $kG$ with generators and relations, and the $\pi$-points of $G$ over $k$ make an affine-algebraic subset $\Pi$ of $\AA(kG) = \Spec S(\cO(G))$. So it is preferable to characterize the equivalence relation in coordinates. It turns out fixing a closed point $\fp \in \Xscr(G)$, in many important cases, there are standard techniques for producing a finite module $M$ such that $\Xscr(G, M) = \{\fp\}.$ When such $M$ is known, the equivalence class of $\pi$-points over $k$ $\{ \alpha \in \Pi \mid \fp = \fp(\alpha)\}$ is the same as
    \[\{\alpha \in \Pi \mid \alpha^*(M)\text{ is not projective}\}.\]
    By computing Jordan canonical forms, fixing $\alpha \in \Pi$ such that $\fp = \fp(\alpha),$ it becomes straightforward to characterize $\beta \in \Pi$ such that $\alpha \sim \beta.$ The computational advantage here is that we only need to consider a single finite module $M$ rather than range over all $M$.
\end{expo}

\begin{expo}
    A universal approach toward support is given for tensor-triangulated categories in Balmer's tensor-triangular geometry \cite{Balmer05}. For representations of a finite group scheme $G$, we look at the tensor triangulated category given by finite stable representations, denoted $\stmod kG.$ That is, with $kG$ the cocommutative group algebra associated to $G$, we look at the category of finite dimensional modules, with Hom spaces between objects $X, Y$ given by 
    \[\uHom_G(X, Y) := \Hom_G(X, Y) / \cP(X, Y), \]
    where $\cP(X, Y)$ is the subspace of maps factoring through a projective module. This category is tensor-triangulated, a general fact for the stable category of a Frobenius category with exact monoidal product (see e.g. Keller, \cite{Keller94}). 
    The work of Benson, Carlson, Rickard \cite{BCR96}, and Friedlander and Pevstova \cite{BIKP15}, \cite{BIKP18}, classifies the thick $\otimes$-ideals of $\stmod kG.$ In Balmer's tt-geometric terms, what this means is that the projective variety $\Xscr(G)$ defined above is the spectrum of the tt-category $\stmod kG$.

     Recall the basic elements for tt-structure on $\stmod kG$: The algebra $kG$ is a cocommutative Hopf algebra with counit $kG \to k$ defining $k$ to be the trivial module, the monoidal unit with respect to the product $\otimes$ of representations (see Section \ref{reconstructionSection}).
     A finite representation $P$ is projective if and only if it is isomorphic to $0$ as an object of $\stmod kG$, and the projective modules form an $\otimes$-ideal, meaning the monoidal structure of $\mod kG$ descends to the quotient $\stmod kG.$ 
     
     The triangulated structure has the suspension autoequivalence $\Sigma,$ defined on objects as (co)syzygies, i.e. $\Sigma M = \coker(\iota)$ 
     where $\iota : M \hookrightarrow I $ 
     is a minimal injective embedding of $M$, and the inverse supspension defined by syzygies $\Sigma^{-1} M = \Omega M = \ker(\epsilon)$ where 
     $\epsilon : P \twoheadrightarrow M$ is a minimal projective cover of $M$. The exact triangles come from exact sequences of modules; see, e.g., Happel \cite{Happel87} or Keller \cite{Keller94}. 
\end{expo}

\begin{definition}
    A full subcategory $\cC$ of $\mod kG$ is \emph{triangulated} if
    \begin{enumerate}[(T1)]
        \item Every finite projective $kG$-module is contained in $\cC$, and
        \item For every short exact sequence of finite modules 
        \[0 \to M' \to M \to M'' \to 0,\]
        if two of the modules in $\{M', M, M''\}$ belong to $\cC$ then so does the third.
    \end{enumerate}
        Notice a triangulated subcategory is closed under isomorphism, and also under the autoequivalences $\Sigma, \Omega.$
        
    The subcategory $\cC$ is called \emph{thick} if in addition
    \begin{enumerate}[resume*]
        \item Whenever $M \oplus M'$ belongs to $\cC$, so do the summands $M, M'$.
    \end{enumerate}
    
    The subcategory $\cC$ is called \emph{$\otimes$-ideal} if in addition
    \begin{enumerate}[resume*]
        \item Whenever $M$ belongs to $\cC$, $N \otimes M$ belongs to $M$ for any module $N$.
    \end{enumerate}
    
    The subcategory $\cC$ is called \emph{radical} if in addition
    \begin{enumerate}[resume*]
        \item If the $n$-fold product $M^{\otimes n}$ belongs to $\cC$, so does the module $M$. 
    \end{enumerate}
\end{definition}

The properties (T1)-(T5) for the tensor category $\mod kG$ all descend to the quotient $\stmod kG$ to define the corresponding notions \cite{Balmer05} for subcategories of the tt-category. Now we recall what it means to be a classifying support data on the tt-category $\stmod kG.$
\begin{definition}\cite{Balmer05} A \emph{support data} on the tt-category $\stmod kG$ is a pair $(\Xscr, \sigma),$ where $\Xscr$ is a topological space and $\sigma$ is an assignment which associates to any object $M \in \stmod kG$ a \emph{closed} subset $\sigma(M) \subset \Xscr$ subject to the following rules:
\begin{enumerate}[(S1)]
    \item\label{SD1} $\sigma(0) = \emptyset$,
    \item\label{SD2} $\sigma(M \oplus M') = \sigma(M) \cup \sigma(M')$,
    \item\label{SD3} $\sigma(\Sigma M) = \sigma(\Omega M) = \sigma(M)$,
    \item\label{SD4} $\sigma(M) \subset \sigma(M')\cup \sigma(M'')$ for any short exact sequence \[0 \to M' \to M \to M'' \to 0,\]
    \item\label{SD5} $\sigma(M \otimes M') = \sigma(M)\cap \sigma(M').$
\end{enumerate}
A support data $(\Xscr, \sigma)$ for $\stmod kG$ is a \emph{classifying support data} if the following two conditions hold:
\begin{enumerate}[(C1)]
    \item\label{CSD1} The topological space $\Xscr$ is noetherian and any non-empty irreducible closed subset $\Zscr \subset \Xscr$ has a unique generic point: $\exists !\ x \in \Zscr$ with $\overline{\{x \}} = \Zscr$,
    \item\label{CSD2} We have a bijection 
\begin{flalign*}
    \{\text{Thomason subsets }&\Yscr \subset \Xscr\} \xrightarrow{\sim}\\ 
    &\{\text{thick }\otimes\text{-ideals }\cJ \subset \stmod kG\}
\end{flalign*}
defined by $\Yscr\mapsto \{ M \in \stmod kG \mid \sigma(M) \subset \Yscr\}$, with inverse
$\cJ \mapsto \sigma(\cJ) := \bigcup_{M \in \cJ} \sigma(M).$
\end{enumerate}
\end{definition}
The theorem of Friedlander and Suslin \cite{FS97} shows that the purely topological condition \ref{CSD1} holds for $\Xscr = \Xscr(G)$. The work of Friedlander and Pevtsova \cite{FrPev07}, \cite{FrPev05},  includes that taking $\Xscr = \Xscr(G)$ and defining cohomological support $\sigma(M) = \Xscr(G, M)$ makes $(\Xscr, \sigma)$ into a support data for $\stmod G$. With Benson, Iyengar, Krause, and Pevtsova \cite{BIKP15}, \cite{BIKP18}, we have indeed that cohomology gives a classifying support data for $\stmod G.$
As a consequence, $\Xscr(G)$ has a certain universal property for support data (see Balmer's \cite[Theorem~5.2]{Balmer05}) making it the spectrum of the tensor triangulated category $\stmod kG.$

Some elementary considerations show that in fact the bijection from condition \ref{CSD2} preserves inclusion. Thus, we know how to characterize minimal (radical, $\otimes$-ideal) thick subcategories; they are the subcategories $\cC(\fp)$ supported at a singleton closed point $\fp \in \Xscr(G).$ 
Any finite module has support a closed set, so those with a singleton support in particular have support a closed point.

% \begin{expo}
% Note that each thick $\otimes$-ideal of $\stmod kG$ is a radical $\otimes$-ideal. That is, if $\cJ$ is a thick subcategory of $\stmod kG$ and for a finite representation $M$, the $n$-fold product $M^{\otimes n } \in \cJ$ for some $n \ge 0,$ then also $M \in \cJ$ ($\cJ$ is radical), and for any $M\in \cJ$ and $N \in \stmod kG$, we have $M \otimes N \in \cJ$ ($\cJ$ is ideal).
% General tt-categories do not have this property, but we know this for finite representations by following Balmer's inductive argument for dualizable objects: Given a finite representation $M$, and $\Hom_k(N, N') \cong N'\otimes N^*$ the internal hom of representations, there is a split embedding $M \hookrightarrow M \otimes \End_k(M)$ 
% given by $m \mapsto m\otimes \id_M$. Now $\End_k(M) \cong M\otimes M^*$ and we see $M$ is a direct summand of $M\otimes M \otimes \End_k(M),$ hence $M$ is contained in the thick $\otimes$-ideal generated by $M \otimes M.$
% This implies $M^{\otimes n}$ is in the thick $\otimes$-ideal generated by $M^{\otimes( n + 1)}$ so by induction, $M$ is in the thick $\otimes$-ideal generated by $M^{\otimes n }$ for any $n > 0$.
% \end{expo}

\begin{expo}
    On the constancy of classifying support between different Hopf algebra structures: If $A\to k$ is an augmentation map, we can define a graded algebra structure on the cohomology $H^*(A, k) = \Ext^*(k, k)$ by splicing Yoneda extensions. If $A$ is given a Hopf algebra structure such that $A \to k$ is the counit, one shows the splicing of Yoneda extensions is equivalent to the cup product, which is known to be graded-commutative (see e.g. Benson, \cite{bensonI} in the case of cocommutative Hopf algebras). The same is true of the graded right-module structure on $H^*(A, M) = \Ext^*(k, M)$. 

    What we have now, is that even though applying the tt-geometry methods discussed for $A$-modules depends on the existence of a cocommutative Hopf algebra structure on $A$ giving a symmetric monoidal product to begin with, the variety $\Xscr(A) = \Proj H^*(A, k)$ does not depend on which Hopf algebra structure is chosen, and the supports $\Xscr(A, M)$ of a finite module $M$ are in this way also independent. They all satisfy the tensor product property \ref{SD5}, and in particular the subcategories $\cC(\fp)$ supported at a point $\fp \in \Xscr(A)$ are closed under any symmetric tensor product chosen. 
\end{expo}
We have now concluded our review of terminology used in Definition \ref{mainProperty}. 

\section{A property of some Lie algebras}\label{mainPropertySection}
\subsection{Definitions}\label{defsection}
\begin{definition}\label{nobleDef}
    Let $G$ be a finite group scheme over a field $k$. We say a $\pi$-point is \emph{noble for $G$} if it is equivalent to a map 
    \[\alpha : K[t]/t^p \to KG_K\]
    such that the image $\alpha(K[t]/t^p)$ is a Hopf subalgebra of $KG_K,$ and the point is \emph{ignoble for $G$} otherwise. 
    We may also refer to an equivalence class of a $\pi$-point being noble or ignoble, as well as its realization as a point $\fp\in \Proj H^*(G, k).$
\end{definition}

\begin{example}
    The Klein 4-group $G = \langle h, g \mid h^2 = g^2 = (gh)^2 = 1\rangle$, for $p = 2$, has 3 noble $\pi$-points up to equivalence, corresponding to its 3 cyclic subgroups generated by $j = h, g, hg,$ and defining a $\pi$-point $t \mapsto j - 1.$ We revisit this in Sections \ref{sectionp2lemma}, \ref{sectionKlein}. Every flat map $k[t]/t^p \to kG$ is defined by 
    \[t \mapsto a(h- 1) + b(g-1) + c(h-1)(g-1),\]
    for nonzero vector $(a, b) \in k^2$, and equivalence of $\pi$-points identifies the triples $(a, b, c)\sim (a', b', c')$ if and only if $ab' - a'b = 0$ (this can be seen explicitly after classifying all modules, $G$ being of tame representation type). Thus $\Xscr(G) = \PP^1$, i.e. points are given homogeneous coordinates $[a : b]$. The maps corresponding to the three generators $h, g, gh$ in these coordinates are $[1 : 0], [0 : 1], [1 : 1]$ respectively. For $G$ we have now that $[1 : 0]$ is noble and $[a : 1]$ is noble iff $a \in \FF_2$.
\end{example}

\begin{definition}\label{semirings}
    A full subcategory $\cD$ of a finite tensor category $(\cC, \otimes)$ (over a field $k$) is a \emph{semiring subcategory} if the set of isomorphism classes of objects $\cD$ is closed under direct sum and tensor product. If $\Phi: \cC_1 \to \cC_2$ is an equivalence of ($k$-linear abelian) categories between finite tensor categories $(\cC_i, \otimes_i),$ and $\Phi$ restricts essentially to an equivalence of categories $\cD_1 \to \cD_2$ between semiring subcategories $\cD_i \subset \cC_i$, we write $(\cD_1, \otimes_1) \equiv (\cD_2, \otimes_2)$ to mean an $\Phi$ induces an isomorphism of Green rings, i.e. $\Phi(a \otimes_1 b) \cong \Phi(a)\otimes_2 \Phi(b)$ for each pair of objects $a, b\in \cD_1.$
\end{definition}

\begin{definition}\label{PABC}
    Let $\fg$ be a restricted Lie algebra over $k$, and $A = u(\fg)$ the restricted enveloping algebra. Let $\widetilde \Delta, \widetilde \otimes$ be the Lie comultiplication on $A$ and its associated tensor product of $\fg$ representations. When $G$ is a finite group scheme arising from a Hopf algebra structure on the augmented algebra $A$, we let $\Delta, \otimes$ denote the group comultiplication on $A$ and its associated tensor product of $G$-representations. Denote $\mathcal{C}(\fp)$ the minimal thick subcategory of finite $A$-modules, with support a singleton $\fp \in \Proj H^*(A, k)$.
    
    \begin{enumerate}[A.]\label{mainBidirectional}
        \item\label{mainPropertyA} The algebra $\fg$ is said to satisfy \emph{Property PA} if for any finite group scheme $G$ as above, and any noble $\fp \in \Xscr(A)$ for $G$, that $(\cC(\fp), \otimes) \equiv (\cC(\fp), \widetilde\otimes)$ as semiring subcategories (Definition \ref{semirings}).
        \item\label{mainPropertyB} The algebra $\fg$ is said to satisfy \emph{Property PB} if for any finite group scheme $G$ as above, and any ignoble $\fp \in \Xscr(A)$ for $G$, there exists $V, W \in \cC(\fp)$ such that $V\otimes W$ is not isomorphic to $V\widetilde\otimes W.$
    \end{enumerate}
    It is clear from definitions that Properties PA, PB are converse to one another and that in conjunction they are Property PC of \ref{mainProperty}. 
\end{definition}
\begin{expo}\label{considerthequotient}
We must emphasize the formal meaning of quantifying our properties PA and PB over all group schemes $G$ having a given group algebra $A$. On one hand, $k$ being algebraically closed and $A = u(\fg)$ being finite dimensional, we may na{\"i}vely define an affine variety $\Bscr$ of cocommutative bialgebra structures $\Delta : A\to A \otimes A$ with fixed counit $A \to k$, an affine variety $\Sscr$ of linear maps $S : A \to A$, and a closed subvariety $\Hscr \subset \Bscr \times \Sscr$ of cocommutative Hopf algebras $(\Delta, S)$ with $S$ an antipode for the comutiplication $\Delta$. In fact, antipodes being uniquely determined by comultiplications, the composition  $\Hscr \to \Bscr \times \Sscr \to \Bscr$ is injective on $k$-points. On the other hand the work of X. Wang et. al. \cite{NgW23}, \cite{NWW15}, \cite{NWW16}, \cite{WW14}, \cite{XWang13}, \cite{XWang15} classifies Hopf algebras in small dimension \emph{up to equivalence}. For our purposes, these classifications can provide a computation of the orbit space $\Hscr / \Aut(A)$ (of $k$-points) where $\Aut(A)$ is the group of augmented $k$-algebra automorphisms $\varphi : A \to A$ acting on $\Bscr \times \Sscr$ by 
\[(\Delta ,\, S) \mapsto (\Delta^\varphi,\, S^\varphi) = ((\varphi \otimes \varphi)\circ \Delta \circ \varphi^{-1},\, \varphi \circ S \circ \varphi^{-1} ),\]
making $\Hscr$ an invariant subvariety.

In these terms, what we have is that for the properties P $=$ PA, PB, PC, we defined implicitly an existential P$'(s)$ dependent on a set $s\in \Hscr$ of $k$-points such that Property P is in the form
\[\text{P} := \forall s \in \Hscr,\,\, \left(\text{P}'(s)\text{ holds}\right),\]
per definition. But to make good use the work of X. Wang et. al. while avoiding the enormous computation of $\Hscr$, we must confirm a curtailment of Property P to be valid for $u(\fg)$. That is, we would like to range over the set $t \in \Hscr / \Aut(A)$, realized as a complete set of $\Aut(A)$-orbit representatives in $\Hscr,$ and confirm that
\begin{equation}\label{curtailment}
\left(\forall t \in \Hscr / \Aut(A),\,\,\left( \text{P}'(t)\text{ holds }\right)\right) \implies \text{P}.
\end{equation}
\end{expo}

The curtailments \ref{curtailment} for Properties P $=$ PA, PB, PC are not immediate for a given $\fg$. The curtailments would follow if for example it is known that \[\text{P}'(s) \implies \left(\text{P}'(s^\varphi)\quad \forall \varphi \in \Aut(A) \right)\] for each $s = (\Delta, S) \in \Hscr,$ where $s^\varphi = (\Delta^\varphi, S^\varphi)$. To see why this is not immediate, consider the following lemmas.

\begin{lemma}\label{twisting}(Twisting Hopf algebras)
    Let $\varphi \in \Aut(A)$ be an augmented algebra automorphism, and  $\varphi^* : \Proj H^*(A, k) \to \Proj H^*(A, k)$ the induced automorphism on varieties (\ref{actionOnPiPts}). If $\Delta$ is the comultiplication for a group scheme $G$ with $kG \cong A$, denote the twisted group scheme $G^\varphi$ by the comultiplication
    $\Delta^\varphi = (\varphi \otimes \varphi) \circ \Delta\circ \varphi^{-1}$. Then $\fp \in \Proj H^*(A, k)$ is noble for $G$ iff $\varphi^*(\fp)$ is noble for $G^\varphi.$
\end{lemma}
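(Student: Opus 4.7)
The plan is to show the two directions are converse to each other under the correspondence $\alpha \mapsto \varphi_K \circ \alpha$ on $\pi$-points, and then use the identity $(\Delta^\varphi)^{\varphi^{-1}} = \Delta$ to get the second direction from the first. So I will focus on proving: if $\fp = \fp(\alpha)$ is represented by a noble $\pi$-point $\alpha : K[t]/t^p \to KG_K$ for $G$, then $\beta := \varphi_K \circ \alpha$ is a noble $\pi$-point for $G^\varphi$ representing $\varphi^*(\fp)$.

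First I would invoke Lemma \ref{actionOnPiPts}: the composition $\beta = \varphi_K \circ \alpha$ is automatically a generalized $\pi$-point of $G$ over $K$ with cohomological class $\varphi^*(\fp(\alpha))$. Since the cohomology ring $H^*(A,k)$ and the induced maps of cohomology from flat maps $K[t]/t^p \to A_K$ do not depend on which Hopf structure is placed on $A$, the same class $\varphi^*(\fp)$ is the one assigned to $\beta$ when $A$ is regarded with comultiplication $\Delta^\varphi$.

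Next I would directly verify that $\beta$ is in fact a $\pi$-point for $G^\varphi$ (not merely a generalized one) with image a Hopf subalgebra with respect to $\Delta^\varphi_K$. Let $B = \alpha(K[t]/t^p) \subset A_K$, which by nobility satisfies $\Delta_K(B) \subset B \otimes B$. The image of $\beta$ is $\varphi_K(B)$, and a short computation gives
\begin{align*}
\Delta^\varphi_K(\varphi_K(B))
&= (\varphi_K \otimes \varphi_K) \circ \Delta_K \circ \varphi_K^{-1}(\varphi_K(B)) \\
&= (\varphi_K \otimes \varphi_K)(\Delta_K(B)) \\
&\subset (\varphi_K \otimes \varphi_K)(B \otimes B) = \varphi_K(B) \otimes \varphi_K(B),
\end{align*}
so $\varphi_K(B)$ is a Hopf subalgebra of $(A_K, \Delta^\varphi_K)$. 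Since $\varphi_K$ is an algebra isomorphism and $B \cong K[t]/t^p$ is local, $\varphi_K(B)$ is local too, hence corresponds to a unipotent subgroup scheme of $G^\varphi_K$. Therefore $\beta$ is a genuine $\pi$-point for $G^\varphi$ factoring through a Hopf subalgebra, so $\varphi^*(\fp)$ is noble for $G^\varphi$.

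For the reverse implication, I would apply the forward implication to the automorphism $\varphi^{-1}$ and the group scheme $G^\varphi$: since $(\Delta^\varphi)^{\varphi^{-1}} = \Delta$, nobility of $\varphi^*(\fp)$ for $G^\varphi$ transports to nobility of $(\varphi^{-1})^*(\varphi^*(\fp)) = \fp$ for $G$. The main ``obstacle'' is really just bookkeeping—one has to be slightly careful that the $\Aut(A)$-action on $\Proj H^*(A,k)$ defined after Lemma \ref{actionOnPiPts} is the correct thing to track, and that equivalence of (generalized) $\pi$-points is preserved under $\alpha \mapsto \varphi_K \circ \alpha$ so that the statement about equivalence classes $[\alpha] = \fp$ actually makes sense; both points are already established in the preceding lemma.
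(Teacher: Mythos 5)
Your proposal is correct and follows essentially the route the paper intends: the paper declares the proof of Lemma \ref{twisting} straightforward, and the substance of your argument---that $\varphi_K$ carries the image Hopf subalgebra $B$ to a local Hopf subalgebra $\varphi_K(B)$ of $(A_K,\Delta^\varphi_K)$, with class $\varphi^*(\fp)$, plus the preservation of equivalence---is exactly the computation already carried out in the proof of Lemma \ref{actionOnPiPts}. Your reduction of the reverse implication to the forward one via $(\Delta^\varphi)^{\varphi^{-1}}=\Delta$ is the expected bookkeeping and is fine.
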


\begin{expo}\label{isotropies}
Denote by $\Omega(A, \fp)$ the isotropy subgroup of $\Aut(A)$ for $\pi$-point $\fp$, and \[\Omega(A) = \bigcap_{\fp } \Omega(A, \fp)\]
the kernel of $\Aut(A) \to \Aut(\Proj H^*(A, k))$ which takes $\varphi$ to $\varphi^*$ as in Lemma \ref{twisting}.

Suppose that Hopf algebras $I = \Hscr / \Aut(A)$ on a given $A = u(\fg)$ are classified up to equivalence, as $\Delta_i ,$ corresponding to the scheme $G_i$ and product $\otimes_i$ for $i \in I$. Then for any Hopf algebra structure $\Delta : A \to A \otimes A,$ there is some $\varphi \in \Aut(A)$ and $i \in I$ with $\Delta = \Delta_i^\varphi.$ For modules $M$ with action $\pi : A \otimes_k M \to M$, denote $M^\varphi$ the twisted module on the same $k$-space $M$, but with action the composition \[\pi\circ (\varphi^{-1} \otimes \id_M) : A \otimes_k M^{\varphi} \to M^\varphi,\]
i.e. $M^{\varphi}:= \varphi(A)\otimes_{A} M,$ the base change along $\varphi.$
Then a prime $\fp \in \Xscr(A)$ belongs to the support variety $\Xscr(A, M)$ if and only if $\varphi^*(\fp)$ belongs to $\Xscr(A, M^{\varphi}).$
\end{expo}
\begin{lemma}\label{curtailmentLemma}
    Suppose for each closed point $\fp \in \Xscr(A)$, each finite $A$-module $M$ with $\Xscr(A, M) = \{\fp\},$ and each isotropy $\varphi \in \Omega(A, \fp),$ that $M^\varphi \cong M.$
    Then the curtailment \ref{curtailment} holds for Properties P $=$ PA, PB, PC. 

    In words for e.g. P $=$ PA: under the same isotropy hypothesis, if for all $i \in I$, and any noble point $\fp$ for $G_i$, we have $(\cC(\fp), \otimes_i) \equiv (\cC(\fp), \widetilde\otimes)$ as semiring subcategories, then indeed $\fg$ satisfies Property PA. 
\end{lemma}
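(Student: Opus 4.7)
The plan is to bootstrap the hypothesis $P'(\Delta_i)$ on orbit representatives to all of $\Hscr$ by transporting along the twisting autoequivalences $T_\psi\colon M\mapsto M^\psi$ of $\mod A$, and then closing the resulting gap via the isotropy hypothesis. Given $\Delta \in \Hscr$, first write $\Delta = \Delta_i^\psi$ for some representative $\Delta_i$ and some $\psi \in \Aut(A)$. For $\fp \in \Xscr(A)$ noble for the associated group scheme, set $\fq := (\psi^{-1})^*(\fp)$; by Lemma \ref{twisting}, $\fq$ is noble for $G_i$. The structural identity
\[
(V \otimes_{\Delta'} W)^\psi \;\cong\; V^\psi \otimes_{(\Delta')^\psi} W^\psi,
\]
valid for any cocommutative Hopf structure $\Delta'$, together with $T_\psi$ being a $k$-linear abelian autoequivalence restricting to $\cC(\fq) \xrightarrow{\sim} \cC(\fp)$ (per \ref{isotropies}), makes $T_\psi$ into a translator between tensor structures at $\fq$ and at $\fp$.

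For Property PA, take any witness $\Phi_i$ of $(\cC(\fq),\otimes_i) \equiv (\cC(\fq),\widetilde\otimes)$ and form $\Phi := T_\psi \circ \Phi_i \circ T_{\psi^{-1}}$. Applying the intertwining identity once for $\Delta_i$ and once for $\widetilde\Delta$, $\Phi$ witnesses
\[
(\cC(\fp), \otimes_\Delta) \equiv (\cC(\fp), \otimes_{\widetilde\Delta^\psi}).
\]
The remaining task is to identify $\otimes_{\widetilde\Delta^\psi}$ with $\widetilde\otimes$ on $\cC(\fp)$, which is precisely where the isotropy hypothesis enters. For any $\varphi \in \Omega(A,\fp)$, combining the intertwining formula with the assumed $V^\varphi \cong V$, $W^\varphi \cong W$, and $(V \widetilde\otimes W)^\varphi \cong V \widetilde\otimes W$ on $\cC(\fp)$ yields $V \widetilde\otimes W \cong V \otimes_{\widetilde\Delta^\varphi} W$; hence the tensor product $\otimes_{\Delta'}$ on $\cC(\fp)$ depends only on the $\Omega(A,\fp)$-orbit of $\Delta'$ in $\Hscr$. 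Using the freedom to vary $\psi$ within $\psi \cdot \Stab(\Delta_i)$ and to absorb a Hopf-automorphism factor from $\Stab(\widetilde\Delta)$, one arranges $\widetilde\Delta^\psi = \widetilde\Delta^\varphi$ for some $\varphi \in \Omega(A,\fp)$, completing the identification.

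For Property PB the mirror argument applies: $\fp$ is ignoble for $\Delta$ iff $\fq$ is ignoble for $\Delta_i$ (by Lemma \ref{twisting}), so any witness $V',W' \in \cC(\fq)$ to $V' \otimes_i W' \not\cong V' \widetilde\otimes W'$ transports under $T_\psi$ to $V,W \in \cC(\fp)$ with $V \otimes_\Delta W \not\cong V \otimes_{\widetilde\Delta^\psi} W$; the isotropy-based identification from the PA argument then converts this into $V \otimes_\Delta W \not\cong V \widetilde\otimes W$. Property PC follows as the conjunction of PA and PB.

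The main obstacle is the last step of PA: decomposing the twist $\psi$ as $\varphi \cdot t \cdot s$ with $\varphi \in \Omega(A,\fp)$, $t \in \Stab(\widetilde\Delta)$, $s \in \Stab(\Delta_i)$, so as to collapse $\widetilde\Delta^\psi$ onto the $\Omega(A,\fp)$-orbit of $\widetilde\Delta$ under the isotropy hypothesis. Carrying this out demands a careful analysis of how these three subgroups of $\Aut(A)$ jointly act on $\Hscr$ and on $\Xscr(A)$, and the isotropy hypothesis is the feature that forces the required rearrangement; all the other steps are straightforward applications of the twisting formula.
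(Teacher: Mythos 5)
Your reduction is the right one, and it matches the route the paper indicates: transport the representative-level witness along the twist functor $T_\psi$, use $(V\otimes_{\Delta'}W)^\psi \cong V^\psi\otimes_{(\Delta')^\psi}W^\psi$ together with Lemma \ref{twisting}, and thereby reduce the whole lemma to identifying $\widetilde\otimes^\psi$ with $\widetilde\otimes$ on $\cC(\fp)$. The gap is that this last identification — which is the entire content of the lemma — is never proved. What the isotropy hypothesis actually yields is only the following: if $\rho\in\Omega(A,\fp)$, then $M^\rho\cong M$ for $M$ supported at $\{\fp\}$, hence $V\widetilde\otimes^\rho W=(V^{\rho^{-1}}\widetilde\otimes W^{\rho^{-1}})^\rho\cong V\widetilde\otimes W$ for $V,W\in\cC(\fp)$; i.e. the Lie product on $\cC(\fp)$ is unchanged by twisting along point-fixing automorphisms. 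To handle an arbitrary $\psi$ you appeal to a factorization $\psi=\varphi\, t\, s$ with $\varphi\in\Omega(A,\fp)$, $t\in\Stab(\widetilde\Delta)$, $s\in\Stab(\Delta_i)$, and assert that "the isotropy hypothesis is the feature that forces the required rearrangement." It cannot: the isotropy hypothesis is a statement about modules, whereas the factorization is a purely group-theoretic statement about how $\Omega(A,\fp)$, the Hopf automorphisms of $(A,\widetilde\Delta)$ (i.e. automorphisms of $\fg$) and $\Stab(\Delta_i)$ sit inside $\Aut(A)$ — equivalently, about whether $\fp$ and $\fq=(\psi^*)^{-1}(\fp)$ lie in a single $\Aut(\fg)$-orbit in $\Xscr(A)$. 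Nothing in your argument establishes this, and your final paragraph concedes it is left as "a careful analysis." Since the PB half relies on the same identification, both halves of your proof are incomplete at exactly this point.

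It is worth seeing why the gap is invisible in the paper's applications: there the orbit statement holds for trivial reasons ($\Xscr(A)$ is a single point in Example \ref{exGa1} and its companion, and $\Aut(\fg)=\GL_2$ acts transitively on $\Xscr(A)=\PP^1$ in Section \ref{sectionKlein}), so one can first move $\fq$ to $\fp$ by a Hopf automorphism $\tau$ of $(A,\widetilde\Delta)$ — for which $T_\tau$ is genuinely $\widetilde\otimes$-multiplicative — and only then apply the isotropy hypothesis to the leftover point-fixing twist $\psi\tau^{-1}\in\Omega(A,\fp)$. The lemma, however, is stated for an arbitrary restricted Lie algebra, so a complete proof must either establish the orbit/factorization claim in that generality (that the $\Aut(A)$-orbit of each point of $\Xscr(A)$ is covered by its $\Aut(\fg)$-orbit, up to the allowed $\Stab(\Delta_i)$ adjustment), or replace the bridging step by an argument for $(\cC(\fp),\widetilde\otimes^\psi)\equiv(\cC(\fp),\widetilde\otimes)$ that does not route through a twist functor fixing $\fp$. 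Supplying that missing ingredient — not the isotropy hypothesis — is what your proof still needs.
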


The proofs of Lemmas \ref{twisting}, \ref{curtailmentLemma} are straightforward. One shows with Lemma \ref{twisting} that the isotropy hypothesis of Lemma \ref{curtailmentLemma} has, as a consequence, that $\text{P}'(s) \implies \text{P}'(s^\varphi)$ for Hopf algebras $s \in \Hscr$, and Properties P $=$ PA, PB, PC. But the isotropy hypothesis is not immediate for restricted enveloping algebras $A = u(\fg),$ a counterexample is given in \ref{nPA}.

We conclude this section with two easy examples of restricted Lie algebras of finite representation type which satisfy Property PC. 
\begin{example}\label{exGa1}
    Let $\fg = \langle x \rangle$ be the one dimensional Lie algebra, with trivial restriction $\fg^{[p]}= 0.$ The restricted enveloping algebra is given by
    \[u(\fg) = A = k[x]/x^p,\]
    and by a theorem of Oort and Tate \cite{OT1970}, there are only two Hopf algebra structures on $A$ up to isomorphism. They are given by
    \begin{enumerate}
        \item $\widetilde\Delta : x \mapsto x \otimes 1 + 1 \otimes x$,
        \item $\Delta : x \mapsto x \otimes 1 + 1 \otimes x + x \otimes x,$
    \end{enumerate}
    with tensor products denoted $\widetilde \otimes, \otimes$ respectively. The structure $(A, \widetilde \Delta) $ is equivalent to the group algebra for $\GG_{a(1)}$, and the structure $(A, \Delta)$ is equivalent to the group algebra for $\ZZ / p.$ The algebra $A$ is a quotient of a PID and it is easy to see how there is exactly one indecomposable module $J_i$ of dimension $i$, for $1 \le i \le p$, with $J_p$ the unique indecomposable projective module. It is known (see e.g. Benson \cite{Benson17}) that $J_i\, \widetilde\otimes\, J_j \cong J_i \otimes J_j$ for any $1 \le i, j \le p.$

    It follows that $\fg$ satisfies Property PC. To elaborate, Proposition \ref{tautnoble} (or better yet, a direct computation of cohomology and cup product) tells us that $\Proj H^*(\fg, k)$ consists of a single point $\fp$, represented by the identity for $\fg$, and with that, each $J_i$ belongs to the unique minimal thick subcategory $\cC(\fp)$. Finally, we may apply Lemma \ref{curtailmentLemma}: in this case $\Omega(A)= \Aut(A)$, and we see for $\varphi \in \Omega(A)$, that $J_i^\varphi$ is indecomposable of dimension $i$, hence isomorphic to $J_i$.
\end{example}
\begin{example}
    Let $\fg = \langle x, y\ \mid\ [x,y] = 0, x^{[p]} = y, y^{[p]} = 0\rangle$ be the two dimensional abelian Lie algebra with nontrivial restriction. The restricted enveloping algebra is given by
    \[u(\fg) = A = k[x]/x^{p^2}.\]
    By a corollary of X. Wang \cite{XWang13}, there are only three Hopf algebra structures on $A$ up to isomorphism. They are given by 
    \begin{enumerate}
        \item $\widetilde \Delta : x \mapsto x \otimes 1 + 1 \otimes x,$
        \item $\Delta_1 : x \mapsto x \otimes 1 + 1 \otimes x + \omega(x^p),$
        \item $\Delta_2 : x \mapsto x \otimes 1 + 1 \otimes x + x\otimes x,$
    \end{enumerate}
    with tensor products $\widetilde\otimes, \otimes_1, \otimes_2$ respectively. The structure $(A, \widetilde\Delta)$ is equivalent to the group algebra for the Frobenius kernel $\WW_{2(1)},$ where $\WW_i$ is the algebraic group of \emph{length $i$ Witt vectors}. The comultiplication $\Delta_1$ depends on a term $\omega(x^p)$, defined as 
    \[\omega(y) = \frac{(y\otimes 1 + 1\otimes y)^p - (y^p\otimes 1 + 1\otimes y^p)}{p},\] a formal division by $p$ in characteristic $p$. The structure $(A, \Delta_1)$ is the group algebra for a certain degree $p$ subgroup $G_2$ of the second Frobenius kernel $\WW_{2(2)}$. Both $G_2$ and $\WW_{2(2)}$ are equal to their own Cartier dual.  The structure $(A, \Delta_2)$ is equivalent to the group algebra for $\ZZ / (p^2).$
    
    As in the Oort-Tate example above, there is only one $\pi$-point for these group schemes, this time represented by the map
    \[k[t]/t^p \xrightarrow{x^p} k[x]/x^{p^2},\] a subgroup inclusion making the point noble for all three group schemes. It is not hard to see that the products $\widetilde\otimes, \otimes_1, \otimes_2$ all give the same Green ring, so that $\fg$ has Property PC, again making use of Lemma \ref{curtailmentLemma}.
\end{example}

\subsection{Abelian Lie algebras of dimension 2}\label{sectionp2lemma}
Throughout this section we let $\fg$ be the abelian Lie algebra of dimension 2 with the trivial restriction $\fg^{[p]} = 0,$ and $A = u(\fg)$ the restricted enveloping algebra which we endow with coordinates
\[A = k[x,y]/(x^p, y^p)\]
and take $\widetilde \Delta$ to be the Hopf algebra comultiplication making $x$ and $y$ as primitive, and $\widetilde \otimes$ the corresponding tensor product of $A$-modules. 

For $p > 2$, $A$ is of wild representation type, and we will show that $\fg$ does not meet the hypothesis of Lemma \ref{curtailmentLemma}, and in fact, $\fg$ does not satisfy Property PA. In the tame case $p=2$, we show in Section \ref{sectionKlein} that Lemma \ref{curtailmentLemma} can be applied directly.

\begin{expo}\label{WangClassification}
The Hopf algebra structures on $A$, up to equivalence, are classified by X. Wang in \cite{XWang13}, and the cocommutative structures are given as follows. 
\begin{enumerate}\setcounter{enumi}{-1}
    \item The Lie algebra $k\GG_{a(1)}^2$
    \begin{flalign*}
        \widetilde\Delta : x &\mapsto x \otimes 1 + 1 \otimes x\\
        y &\mapsto y \otimes 1 + 1 \otimes y,
    \end{flalign*}
    \item The quasi-elementary group algebra $k\GG_{a(2)}$
    \begin{flalign*}
        \Delta_1 : x &\mapsto x \otimes 1 + 1 \otimes x\\
        y &\mapsto y\otimes 1 + 1 \otimes y + \omega(x),
    \end{flalign*}
    \item The group-Lie product $k\left(\GG_{a(1)}\times \ZZ / p\right)$
    \begin{flalign*}
        \Delta_2 : x &\mapsto x \otimes 1 + 1 \otimes x\\
        y &\mapsto y \otimes 1 + 1 \otimes y + y\otimes y,
    \end{flalign*}
    \item The discrete group algebra $k(\ZZ / p)^2$
    \begin{flalign*}
        \Delta_3 : x &\mapsto x \otimes 1 + 1 \otimes x + x \otimes x\\
        y &\mapsto y \otimes 1 + 1 \otimes y + y \otimes y.
    \end{flalign*}
\end{enumerate}
For $\Delta_1$ we have used the notation
\[\omega(x) = \frac{(x \otimes 1 + 1 \otimes x)^p - (x^p\otimes 1 + 1\otimes x^p)}{p},\]
a formal division of binomial coefficients by $p$.
\end{expo}
\begin{expo}
    We calculate the spectrum $\Xscr(\GG_{a(1)}^2) = \Proj H^*(\GG_{a(1)}^2, k)$, applying Lemma \ref{tautnoble}, to be $\PP^1,$ since each linear subspace of $\fg_K$ over an extension of fields $K / k$ is a Lie subalgebra with trivial restriction. Each $\pi$-point $\alpha_K$ is of the form 
    \[K[t]/t^p \to K[x, y]/(x^p, y^p)\]
    with $t \mapsto ax + by + \xi,$
    for $a,b \in K$ not both $0$,
    where $\xi$ is a polynomial in the ideal $(x^2, xy, y^2),$ i.e. a higher order term. From Friedlander and Pevtsova \cite{FrPev07}, if we let $\beta_L$ be another $\pi$-point with $t \mapsto a'x + b'y + \xi'$ in the same form over $L$, then $\alpha_K \sim \beta_L$ if and only if there is a common extension $F$ of $K$ and $L$ such that $[a : b] = [a' : b']$ as $F$-points of the projective scheme $\PP^1.$

    The group schemes corresponding to the four cocommutative Hopf algebras listed in \ref{WangClassification} are as we have claimed in notation: \[\GG_{a(1)}^2,\quad \GG_{a(2)},\quad \GG_{a(1)}\times \ZZ / p,\quad (\ZZ / p)^2.\]
    We already know that each $\pi$-point is noble for the Lie algebra, corresponding to $\GG_{a(1)}^2$. The noble points for the three Group schemes representing the remaining points of $\Hscr / \Aut(A)$ are calculated below. One checks that each of these group schemes has finitely many subgroup schemes and that base changing to any field extension does not change the number of subgroup schemes. 
    \begin{enumerate}
        \item The quasi-elementary group scheme $\GG_{a(2)}$ has only one nontrivial proper subgroup, and it is isomorphic $\GG_{a(1)}$. The inclusion of $\GG_{a(1)}$ gives a noble $\pi$-point $k[t]/t^p \to A$ which maps $t \mapsto x.$ Therefore $[1 : 0] \in \PP^1$ is the only noble point for $\GG_{a(2)}$.
        \item The group schemes $\GG_{a(1)}$ and $\ZZ / p$ are disjunct in the sense that any subgroup of the product $\GG_{a(1)} \times \ZZ / p$ is the natural inclusion of a product $H_1 \times H_2$ for $H_1 \le \GG_{a(1)}$ and $H_2 \le \ZZ / p$. Therefore there are two inclusions $\GG_{a(1)}\times 0 < \GG_{a(1)} \times \ZZ / p$ and $0 \times \ZZ/ p  < \GG_{a(1)} \times \ZZ / p$, which give noble $\pi$-points $k[t] / t^p \to A$, mapping $t \mapsto x$ and $t\mapsto y$ respectively. Therefore $[1 : 0]$ and $[0: 1]$ are the only noble points for the product $\GG_{a(1)} \times \ZZ / p$.
        \item The discrete group scheme $(\ZZ / p)^2$ is a 2-dimensional vector space over the prime field $\ZZ/ p$. Therefore the nontrivial proper subgroups are all isomorphic to 1-dimensional subspaces, i.e. the cyclic subgroups. The inclusion of the cyclic subgroup generated by $(i, j) \in (\ZZ/ p )^2$ gives a noble $\pi$-point $k[t]/t^p \to A$ mapping $t \mapsto (x +1)^i(y+1)^j - 1.$ Computing the linear term then tells us that the of the noble points for the discrete group are precisely those in the form $[ i : j ] \in \PP^1$ for $i, j \in \ZZ / p,$ and up to equivalence there are $p + 1$ of them. 
    \end{enumerate}
\end{expo}

\begin{lemma}\label{PB'}
    Let $I = \{\widetilde\Delta, \Delta_1, \Delta_2, \Delta_3\} = \Hscr / \Aut(A)$, with corresponding schemes $G_i$ and tensor products $\otimes_i$ for $i \in I$. 
    Then for any $G_i$, and any ignoble point $\fp$ for $G_i$, there exists $V, W \in \cC(\fp)$ such that $V \otimes_i W$ is not isomorphic to $V \widetilde \otimes W.$
\end{lemma}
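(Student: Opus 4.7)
The plan is to exhibit, for each of the three non-Lie cases $i \in \{1,2,3\}$ (the case $\widetilde\Delta$ being vacuous since every point is noble for $\widetilde G$ by \ref{tautnoble}) and each ignoble point $\fp$ for $G_i$, a single module $N \in \cC(\fp)$ for which $N\widetilde\otimes N \not\cong N\otimes_i N$. The mechanism will be to show that the linear form $\ell := ax+y \in A$ representing $\fp$ acts as the zero operator on $N\widetilde\otimes N$, while it acts nontrivially on $N\otimes_i N$; since any $A$-linear isomorphism preserves the action of $\ell$, this distinguishes the two modules.

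First I would note that, in the enumeration of ignoble points given just above, the point $[1:0]$ is noble for each of $G_1, G_2, G_3$, so every ignoble $\fp$ has the form $[a:1]$ for some $a \in k$. Setting $\ell = ax+y$ and defining
\[
N \,:=\, A/A\ell \,\cong\, k[x]/x^p,
\]
the element $x$ acts on $N$ as a regular Jordan block $X$ of size $p$ and $y$ acts as $-aX$. A direct computation shows that a generic $\pi$-point $t \mapsto \alpha x + \beta y$ acts on $N$ as $(\alpha - a\beta)X$, which is projective as a $k[t]/t^p$-module iff $\alpha - a\beta \neq 0$; hence $N$ has support $\{\fp\}$, so by the tensor-product formula for support (independent of the Hopf structure), both $N\widetilde\otimes N$ and $N\otimes_i N$ belong to $\cC(\fp)$.

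The second step is to compute the operator $\ell = ax+y$ on each tensor product by substituting $x \to X$ and $y \to -aX$ in the comultiplication formulas of \ref{WangClassification}. For $\widetilde\Delta$ this yields $\ell = (aX + (-aX))\otimes I + I\otimes (aX + (-aX)) = 0$. For $\Delta_3$, including the group-like corrections $x\otimes x$ and $y\otimes y = a^2 X\otimes X$, the residual is $a(1+a)\,X\otimes X$, nonzero since $a \notin \FF_p$ forces $a \neq 0, -1$. For $\Delta_2$, only the $y$-term carries the group-like correction and the residual simplifies to $a^2 X\otimes X$, nonzero since ignoble for $G_2$ requires $a \neq 0$. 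For $\Delta_1$ the correction is the Witt cocycle $\omega(x) = \sum_{k=1}^{p-1} c_k\, x^{p-k}\otimes x^k$ with coefficients $c_k = \tfrac{1}{p}\binom{p}{k}\bmod p \in \FF_p^\times$, and one obtains $\ell = \omega(X)$ on $N\otimes_1 N$, nonzero because, for instance, the $k=1$ summand $c_1 X^{p-1}\otimes X$ is linearly independent of the other bidegree $(p-k,k)$ terms.

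The only mildly technical point is the nonvanishing in the $\Delta_1$ case, which relies on the classical fact that $\tfrac{1}{p}\binom{p}{k}$ is a nonzero integer for $1 \le k \le p-1$, with mod-$p$ value $(-1)^{k-1}/k$. With all four actions computed, $\ell$ acts as zero on $N\widetilde\otimes N$ but as a nonzero operator on each $N\otimes_i N$, so no $A$-linear isomorphism can exist, proving the lemma.
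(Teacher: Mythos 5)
Your proposal is correct and is essentially the paper's own argument: you use the same induced module $V(\fp)=A/A(ax+y)$ supported only at $\fp$, and distinguish the two tensor squares by showing that $ax+y$ annihilates the square under $\widetilde\otimes$ but acts nontrivially under $\otimes_i$, with the same residual terms $\omega(x)$, $a^2\,x\otimes x$, and $(a+a^2)\,x\otimes x$ in the three cases. The only cosmetic difference is that you verify the vanishing for $\widetilde\Delta$ directly, whereas the paper deduces it from the restriction formula $V(\fp)\,\widetilde\otimes\,V(\fp)\cong pV(\fp)$.
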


\begin{proof}
Given $\fp = [a : b] \in \PP^1,$ we let $\alpha(\fp)$ be the \emph{canonical} $\pi$-point, a map \[k[t]/t^p\to A = k[x, y]/(x^p, y^p)\] which takes $t \mapsto ax + by.$ We generate a module with support $\{\fp\}$ by inducing up the trivial $k[t]/t^p$ module $k$ up to $A$, i.e. take
\[V(\fp) = A\otimes_{k[t]/t^p} k,\]
where $A$ is a $k[t]/t^p$ algebra via $\alpha(\fp).$
Explicitly the $A$-module structure for $V(\fp)$ is given as a $p$-dimensional space over $k$ such that $s_2 = ax + by$ acts as the $0$-matrix and, for $s_1 = cx + dy$ with $\{s_1, s_2\}$ linearly independent,  $s_1$ acts by the nilpotent $p\times p$ Jordan block
\[J_p = 
\begin{pmatrix} 
0       &1      & 0     &\dots  & 0\\
0       &0      & 1     &\dots   & 0\\
\vdots  &\vdots & \vdots&\ddots & \vdots\\
0       &0      &0      &0      &1\\
0       &0      &0      &0      &0
\end{pmatrix},\]
written in a fixed ordered basis $s_1^{p-1}v, \dots, s_1v, v,$ where $v = 1 \otimes 1 \in A \otimes_{k[t]/t^p} k.$
Then $\fp \in \Xscr(G_i, V(\fp))$, since $t$ annihilates the restricted module $\alpha(\fp)^*(V(\fp))$. If $\fq = [c : d] \in \PP^1$ is a closed point not equivalent to $\fp$, then the restriction $\alpha(\fq)^*(V(\fp))$ is free of rank 1, and hence $\fq \not\in \Xscr(G_i, V(\fp)),$ and the generic point is not in $\Xscr(G_i, V(\fp))$ either. Thus $\Xscr(G_i, V(\fp))$ is the singleton $\{\fp\}.$

Now we compute the products $V(\fp) \otimes_i V(\fp)$. First, each canonical $\pi$-point $\alpha = \alpha(\fq), \, \fq \in \PP^1,$ is the inclusion of a subgroup scheme of $\GG_{a(1)}^2.$ Therefore for any $A$-modules $M, M',$ we have
\[\alpha^* ( M \widetilde\otimes M') \cong \alpha^*(M)\, \widetilde\otimes\, \alpha^*(M'),\]
where the right-hand $\widetilde\otimes$ is the tensor product of $\GG_{a(1)}$ representations as in Example \ref{exGa1}. Therefore $V(\fp) \widetilde\otimes V(\fp)$ is annihilated by $s_2$ and is free of rank $p$ when restricted along $s_1,$ so
\[V(\fp) \widetilde\otimes V(\fp) \cong pV(\fp).\]

By Proposition \ref{tautnoble} there are no ignoble points for $\GG_{a(1)}^2,$ so PB$'(\widetilde\Delta)$ is vacuously true. To show PB$'(\Delta_i)$ for $i = 1, 2, 3,$ we show that $V(\fp)\otimes_i V(\fp)$ is not annihilated by $s_2$, for each ignoble point $\fp$ for $G_i,$ and is therefore not isomorphic to $V(\fp)\widetilde \otimes V(\fp)$.
Notice the point $[1 : 0] \in \PP^1$ is noble for each of $G_1, G_2, G_3.$ Therefore an ignoble point is always in the form $[a : 1]$ for $a \in k,$ so we may always assume $s_1 = x$ and $s_2 = ax + y.$

The elements $s_2 \otimes 1, 1\otimes s_2 \in A \otimes A$ annihilate the $k$-space $V(\fp)\otimes V(\fp)$. Therefore the action of $s_2$ for the representation $V(\fp)\otimes_i V(\fp)$ of $G_i$, given by $\Delta_i(s_2),$ is well defined as an element of $A\otimes A / \Sigma, $ where $\Sigma$ is the ideal $(s_2 \otimes 1, 1\otimes s_2) \triangleleft A\otimes A.$ The elements of the quotient $A \otimes A/ \Sigma$ are cosets, which we write as $z + \Sigma$ for $z \in A \otimes A.$ 

% We will write matrices in block notation, following the convention that there is an ordered $k$-linear decomposition 
% \[
% V(\fp) \otimes_k V(\fp) = (x^{p-1}\otimes V(\fp)) \oplus \dots \oplus( x \otimes V(\fp)) \oplus (1 \otimes V(\fp)).
% \]
% So an endomorphism in $\End_k(V(\fp) \otimes_k V(\fp))$ will be given by a $p\times p$ square, and for $1 \le n, m \le p,$ the $(n, m)$ entry is a block representing a linear map in $\Hom_k(x^{p-m}\otimes V(\fp), x^{p-n}\otimes V(\fp) ).$ 

% Within the $(n, m)$ block, we identify the basis elements $x^{p-m}\otimes e$ and $x^{p-n} \otimes e$ for basis elements $e = x^i(1 \otimes 1)$ of $V(\fp).$ With this, if $A, B$ are endomorphisms in $\End_k(V(\fp)),$ represented by matrices $(a_{nm}), (b_{nm})$ respectively, then the simple endomorphism $A \otimes B \in \End_k(V(\fp)\otimes_k V(\fp))$ is given as the block matrix $().$

\begin{enumerate}
    \item The ignoble points for $G_1 = \GG_{a(2)}$ are $\fp = [a : 1]$ for any $a \in k.$ We have
    \[\Delta_1 : s_2 \mapsto s_2\otimes 1 + 1 \otimes s_2 + \omega(s_1) \in \omega(s_1) + \Sigma.\]
    We described $\omega(s_1)$ in \ref{WangClassification} using a formal division of binomial coefficients by $p$. This makes $\omega(s_1)$ a sum with coefficients on terms $s_1^n\otimes s_1^m$ for $n+m = p$, and $n, m \ge 1.$ Using the same basis elements $s_1^{p-\ell} v$ from our description of the Jordan block $J_p$, we see \[\omega(s_1) \cdot (s_1^{p-2} v\otimes v) = (s_1 \otimes s_1^{p-1})\cdot(s_1^{p-2} v\otimes v) = s_1^{p-1}v \otimes s_1^{p-1} v,\]
    so $\omega(s_1)$ does not annihilate $V(\fp) \otimes_k V(\fp).$
    \item The ignoble points for $G_2 = \GG_{a(1)}\times \ZZ/ p$ are $[a : 1]$ for $a \neq 0$. We have
    \[\Delta_2 : s_2 \mapsto s_2 \otimes 1 + 1\otimes s_2 + (s_2 - as_1)\otimes(s_2 - as_1) \in a^2 s_1 \otimes s_1 + \Sigma.\]
    Using the same $s_1^{p-\ell} v$ we see
    \[(a^2 s_1 \otimes s_1) \cdot (v \otimes v) = a^2 s_1 v \otimes s_1 v,\]
    and in particular, supposing $a \neq 0,$ we have $a^2s_1\otimes s_1$ does not annihilate $V(\fp)\otimes_k V(\fp).$
    \item The ignoble points for $G_3 = (\ZZ/ p)^2$ are $[a : 1]$ for $a^p - a \neq 0.$ We have
    \begin{flalign*}\Delta_3 : s_2 \mapsto s_2 \otimes 1 + 1\otimes s_2 + as_1\otimes s_1 &+ (s_2 - as_1)\otimes(s_2 - as_1)\\ &\in (a + a^2) s_1 \otimes s_1 + \Sigma.
    \end{flalign*}
    The same considerations as for $G_2$ above shows that, supposing $a + a^2 \neq 0,$ we have $(a + a^2)s_1\otimes s_1$ does not annihilate $V(\fp)\otimes_k V(\fp).$ Note that $a^p - a$ is divisible by $a^2 + a$ in $k[a]$ in any characteristic $p > 0$.
\end{enumerate}
We conclude that if $\fp$ is ignoble for $G_i$, then $V(\fp)\otimes_i V(\fp) \not\cong V(\fp)\, \widetilde\otimes\, V(\fp)$, for each $i = G_1, G_2, G_3 \in \Hscr / \Aut(A),$ since $V(\fp)\, \widetilde\otimes\, V(\fp)$ is annihilated by $s_1$ and $V(\fp)\otimes_i V(\fp)$ is not. Thus PB$'(s)$ holds for $s \in \Hscr / \Aut(A).$
\end{proof}

\begin{thm}\label{nPA}(c.f. Theorem \ref{intro2dimthm})
    If $p > 2,$ then $\fg$ does not satisfy Property PA.
\end{thm}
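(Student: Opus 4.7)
The plan is to reuse essentially the same computation already carried out in the proof of Lemma \ref{PB'} for the Hopf structure $\Delta_3$ corresponding to $G_3 = (\ZZ/p)^2$, but to apply it at a point which is \emph{noble} for $G_3$ rather than ignoble. The key observation is that the formula derived there, $\Delta_3(s_2) \equiv (a + a^2)\, s_1 \otimes s_1 \pmod{\Sigma}$ at the point $\fp = [a : 1]$, makes no use of ignobility of $\fp$; it is a purely algebraic identity. It annihilates $V(\fp) \otimes_k V(\fp)$ precisely when $a + a^2 = 0$, independently of whether $a^p = a$.

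First, I recall from Proposition \ref{tautnoble} (or directly from the fact that every canonical $\pi$-point is the inclusion of a Hopf-subalgebra $k[t]/t^p \hookrightarrow u(\fg)$ when the comultiplication is $\widetilde\Delta$) that $V(\fp) \widetilde\otimes V(\fp)$ is annihilated by $s_2$ for every $\fp \in \Xscr(A)$. In particular this holds at every noble point for $G_3$.

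Next, I choose the noble point $\fp = [1:1] \in \PP^1 = \Xscr(A)$, which is noble for $G_3$ since $1 \in \FF_p$. With $a = 1$, the coefficient $a + a^2 = 2$ is nonzero in $k$ precisely because $p > 2$; this is exactly where the hypothesis on the characteristic enters, and it is the only place where the proof breaks in characteristic $2$ (where the theorem is false by Theorem \ref{introKlein}). The exact computation in part (3) of the proof of Lemma \ref{PB'} then shows that $s_2$ acts on $V(\fp) \otimes_3 V(\fp)$ by an element equal to $2\, s_1 \otimes s_1$ modulo elements already annihilating $V(\fp) \otimes_k V(\fp)$, and this operator takes $v \otimes v$ to $2\, s_1 v \otimes s_1 v \ne 0$.

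Putting these two together: with $G = G_3$, $\fp = [1:1]$ noble for $G$, and $V = W = V(\fp) \in \cC(\fp)$, the $A$-module $V \otimes_3 W$ has $s_2$ acting nontrivially while $V \widetilde\otimes W$ has $s_2$ acting as zero. Isomorphisms of $A$-modules preserve the action of every element of $A$, so $V \otimes_3 W \not\cong V \widetilde\otimes W$, exhibiting a triple $(G, \fp, V, W)$ which witnesses the failure of Property PA. I do not expect any real obstacle; the entire argument is a matter of observing that the ignobility hypothesis is superfluous in case (3) of Lemma \ref{PB'} once $p > 2$, together with the trivial verification that $[1:1]$ is a noble point for $(\ZZ/p)^2$.
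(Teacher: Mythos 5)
Your proof is correct, and it takes a genuinely different route from the paper. The paper's own proof does not use any of the non-Lie representatives $\Delta_1,\Delta_2,\Delta_3$ at a noble point; instead it picks the isotropy $\varphi\in\Omega(A)$ with $\varphi(y)=y+x^2$, twists the Lie comultiplication to get the group scheme $\widetilde G^{\varphi}$ (a Hopf structure on $A$ \emph{not} among Wang's representatives), fixes the noble point $\fp=[0:1]$, and distinguishes $M\widetilde\otimes M$ from $M\widetilde\otimes^{\varphi}M$ by comparing Jordan types of restrictions along the canonical $\pi$-point, using the known Clebsch--Gordan rule for $k[t]/t^p$. That argument is longer, but it is deliberately built to exhibit the failure of the isotropy hypothesis of Lemma \ref{curtailmentLemma} and it is exactly the technique (twisting by isotropies fixing $\fp$ but moving modules) that Section \ref{sectionWild} generalizes to all wild abelian algebras and to the Heisenberg algebras. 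Your argument instead stays with the listed representative $G_3=(\ZZ/p)^2$ and the noble point $[1:1]$, observing that the computation in case (3) of Lemma \ref{PB'} gives $\Delta_3(s_2)\equiv(a+a^2)\,s_1\otimes s_1 \pmod{\Sigma}$ with no use of ignobility, and that $a+a^2=2\neq0$ precisely when $p>2$; since $s_2$ kills $V(\fp)\widetilde\otimes V(\fp)$ but not $V(\fp)\otimes_3 V(\fp)$, the annihilators differ and the modules are non-isomorphic. I checked the details: $[1:1]$ is indeed noble for $G_3$ (cyclic subgroup generated by $(1,1)$, $\pi$-point $t\mapsto x+y+xy$, equivalent to the canonical $t\mapsto x+y$), $\Xscr(A,V(\fp))=\{\fp\}$ as in Lemma \ref{PB'}, and $(2\,s_1\otimes s_1)(v\otimes v)=2\,s_1v\otimes s_1v\neq0$. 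Both proofs disprove Property PA in the same operative sense the paper uses (the two products compared on the same objects of $\cC(\fp)$, i.e.\ via the identity equivalence), so your shortcut is a complete proof of the theorem; what it buys is brevity and reuse of Lemma \ref{PB'}, what it loses is the connection to the curtailment/isotropy machinery that the paper needs later.
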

\begin{proof}
%Let $\fp  = [a : b] \in \PP^1$, and consider the module $M = V(\fp)$. Let $\varphi \in \Omega(A)$ be an isotropy automorphism of $A$ for the $\Aut(A)$-variety $\PP^1 = \Proj H^*(A, k)$. We will show that $M^\varphi$ is not necessarily isomorphic to $M$ when $p > 2$ and therefore Lemma \ref{curtailmentLemma} can not be applied. In fact this will let us conclude our theorem when we twist the Lie algebra comultiplication as $\widetilde \Delta^\varphi = (\varphi\otimes \varphi) \circ \Delta \circ \varphi^{-1}$ to get a counterexample point of $\Hscr$ with the corresponding twisted tensor $\widetilde\otimes^\varphi$.

Consider the automorphism $\varphi : A \to A$ of algebras, with inverse $\varphi^{-1}$
\begin{align*}
    \varphi : x &\mapsto x      &        \varphi^{-1} : x &\mapsto x\\
        y & \mapsto y + x^2,    &                   y   &\mapsto y - x^2.
\end{align*}
Now fix $\fp = [0 :1]$. By examining linear terms of $\varphi(ax + by)$ we see that $\varphi \in \Omega(A).$ For this reason $M^\varphi$ has the same support $\fp$. We compute directly that the module $M^\varphi$ is determined by the $p \times p$-matrix below, representing the action of a generic $ax + by$
\[
\begin{pmatrix}
    0       & a     & -b     & 0     &\dots  &0      \\
    0       & 0     & a     & -b     &\dots  &0      \\
    \vdots  & \vdots&\vdots & \ddots&\ddots &\vdots \\
    0       &0      &0      & 0     & a     & -b\\
    0       &0      &0      &0      & 0     & a\\
    0&0&0&0&0&0
\end{pmatrix}
\]
Now letting $\alpha(\fp)$ be the canonical $\pi$-point with $t \mapsto y$, we see $\alpha(\fp)^* ( M^\varphi)$ has a Jordan block of size $N = \frac{p + 1}2$ (we have assumed $p$ is odd). In particular $M^\varphi$ is not isomorphic to $M$ because it is not annihilated by $y$, thus contradicting the hypothesis in Lemma \ref{curtailment}.
The Jordan block of size $N$ is also present in $\alpha(\fp)^*(M^{\varphi^{-1}}).$ 
We shall see that $M \widetilde \otimes M$ is not isomorphic as an $A$-module to $M \widetilde\otimes^{\varphi} M,$ despite $\fp$ being noble for both the group scheme $G$ asssociated to the Lie algebra $\widetilde\Delta$, and its twist $G^\varphi$ associated to $\widetilde\Delta^\varphi = (\varphi \otimes \varphi)\circ \widetilde\Delta\circ \varphi^{-1}$. This proves Theorem \ref{intro2dimthm}, as our claim shows that $\fg$ does not satisfy Property PA.

Our argument is as follows: if $\otimes$ is induced from any $\Delta$, and $\otimes^\varphi$ induced from the twist $\Delta^\varphi$, then we have for $A$-modules $V, W,$ that there is equality of $A$-modules \[V^\varphi \otimes ^\varphi W^\varphi = (V\otimes W)^\varphi.\]
Therefore $M\widetilde\otimes M$ is isomorphic to $M \widetilde\otimes^\varphi M = (M^{\varphi^{-1}} \widetilde\otimes M^{\varphi^{-1}})^\varphi$ if and only if $(M\widetilde \otimes M)^{\varphi^{-1}}$ is isomorphic to $M^{\varphi^{-1}} \widetilde\otimes M^{\varphi^{-1}}$. But we see that the latter is false by restricting along $\alpha(\fp).$

We know already that $M\widetilde \otimes M$ is a direct sum of $p$ copies of $M$ from the restriction property of $\widetilde\otimes$ used in \ref{PB'}. In particular $(M\widetilde \otimes M)^{\varphi^{-1}} = p M^{\varphi^{-1}}$ has no Jordan blocks of size $p$ when restricted along $\alpha(\fp)$, the largest Jordan block is instead of size $N = \frac{p + 1}{2}.$

But we also have the restriction property
\[\alpha(\fp)^* ( M^{\varphi^{-1}} \widetilde\otimes M^{\varphi^{-1}}) \cong\alpha(\fp)^* ( M^{\varphi^{-1}})\,\widetilde\otimes \,\alpha(\fp)^* ( M^{\varphi^{-1}}), \]
where the right-hand $\widetilde\otimes$ is of $k[t]/t^p$-modules as in Example \ref{exGa1}. The product $\widetilde\otimes$ of $k[t]/t^p$-modules is known to multiply a pair of Jordan blocks of size $n$ to a module containing a Jordan block of size $\max\{2n - 1, p\}$ (see e.g. Benson \cite{Benson17}). We conclude $\alpha(\fp)^* ( M^{\varphi^{-1}} \widetilde\otimes M^{\varphi^{-1}})$ has a Jordan block of size $2N - 1 = p$ but $\alpha(\fp)^*((M\widetilde \otimes M)^{\varphi^{-1}})$ does not, and hence
\[M\widetilde\otimes M \not\cong M\widetilde\otimes^\varphi M.\]
\end{proof}
\subsection{A tame algebra}\label{sectionKlein}
We continue the assumptions of Section \ref{sectionp2lemma}, and we specialize to $p =2,$ so that $A = k[x, y]/(x^2, y^2)$ is of tame representation type. Note that in X. Wang's classification \ref{WangClassification}, we can now replace the term $\omega(t) = t\otimes t$ for $t \in A$, per its definition. 

The Hopf algebra $\Delta_3$ of \ref{WangClassification} corresponds to the discrete group $G_3 = (\ZZ / 2)^2$. Finite indecomposable $A$-modules were first classified by Ba{\v s}ev \cite{Basev61}, identifying $A = k(\ZZ/2)^2$.  The semirings relative to $G_3, \otimes_3$ for thick subcategories $\cC(\fp)$, supported at noble points $\fp\in \Xscr(A)$ for $G_3$, were also successfully calculated  by Ba{\v s}ev, and we will see that $\fg$ satisfies property PA. So for each $V, W$ supported only at a noble point for $G_3$, we will see that $V \otimes_3 W \cong V\widetilde\otimes W$ with the same methods as likely used by Ba{\v s}ev. For the 2-dimensional modules $V(\fp) = A\otimes_{k[t]/t^2} k$ defined in \ref{PB'}, our computations of $V(\fp)\otimes_3 V(\fp)$ also agree with Ba{\v s}ev for $\fp$ both noble and ignoble points for $G_3$. Note that the complete semiring $(\cC(\fp), \otimes_3)$ for the ignoble points $\fp$ for $G_3$ was initially computed in error in \cite{Basev61}, and corrected first by Conlon in \cite{Conlon65}.   

\begin{expo}
Our argument for Theorem \ref{introKlein} is as follows. The algebra $A$ is local so there is up to isomorphism only one projective indecomposable we call $P$, of dimension $4$. For each closed point $\fp \in \Xscr(A, k) = \PP^1$, the minimal thick subcategory $\cC(\fp)$ contains up to isomorphism only $P$, and for each $n = 1, 2, \dots$, a single indecomposable module $V_{2n}(\fp)$ of dimension $2n$ with support $\{\fp\}$. This is shown by Ba{\v s}ev \cite{Basev61}. We will see for $n =1$ that $V_2(\fp)$ agrees with our induced module $V(\fp)$ defined in \ref{PB'}. Ba{\v s}ev's classification shows for us that the algebra $A$ satisfies the isotropy hypothesis for applying the curtailment Lemma \ref{curtailmentLemma}. By Lemma \ref{PB'} we then know that $\fg$ satisfies Property PB. What remains is to repeat the methods of \cite{Basev61} to conclude that
\begin{equation}\label{basevRule}
V_{2n}(\fp) \otimes V_{2m}(\fp) \cong 2 V_{2\min(n, m)} + (nm - \min(n, m))P
\end{equation}
for each point $\fp$ which is noble for the group scheme $G$, having induced product $\otimes = \widetilde\otimes, \otimes_1, \otimes_2, \otimes_3$, and each $n, m.$ Then by the curtailment Lemma \ref{curtailmentLemma}, as well as Propositon \ref{tautnoble} and X. Wang's classification \ref{WangClassification}, we know in particular that $V_{2n}(\fp) \otimes V_{2m}(\fp) \cong V_{2n}(\fp)\, \widetilde\otimes\, V_{2m}(\fp)$ for each Hopf algebra structure of $\Hscr$, with group scheme $G$, product $\otimes$, and $\fp$ noble for $G$. Since these are all possible pairs of nonprojective indecomposables, we conclude that $\fg$ satisfies Property PA and thus Property PC.
\end{expo}
The next three results below extend the techniques of \cite{Basev61} and reduce Ba{\v s}ev's formula \ref{basevRule} to direct computation with matrices. 

\begin{prop}\label{kleinProjComp}
     Let $n,m \in \ZZ_{> 0}$ and $\fp \in \PP^1.$ Then for $\otimes = \widetilde\otimes, \otimes_1, \otimes_2, \otimes_3,$ we have 
    \[V_{2n}(\fp)\otimes V_{2m}(\fp) = V \oplus (nm - \min(n, m))P\]
    for some finite $A$-module $V$ with no nontrivial projective submodule. 
\end{prop}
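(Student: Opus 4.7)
My plan is to establish $V_{2n}(\fp)\otimes V_{2m}(\fp) \cong V \oplus rP$ with $r = nm - \min(n,m)$ by extracting the projective rank from a single algebra-intrinsic linear operator that turns out to be invariant across the four Hopf algebra structures. By the tensor product property of support (Axiom S5 of a classifying support data), the tensor product $V_{2n}(\fp)\otimes V_{2m}(\fp)$ has support $\{\fp\}$ for each of $\widetilde\otimes, \otimes_1, \otimes_2, \otimes_3$. Ba\v{s}ev's classification \cite{Basev61} identifies the only indecomposable $A$-modules supported at a single closed point of $\Xscr(A) = \PP^1$ as the ``band modules'' $V_{2\ell}(\fp)$ ($\ell \geq 1$) and the projective $P$; so a Krull--Schmidt decomposition yields $V_{2n}(\fp)\otimes V_{2m}(\fp) = V \oplus rP$ with $V$ a direct sum of such band modules. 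Each band module satisfies $xy\cdot V_{2\ell}(\fp) = 0$, because $\soc(V_{2\ell}(\fp))$ equals $xV_{2\ell}(\fp) + yV_{2\ell}(\fp)$ and is annihilated by $x,y$; while $xy\cdot P$ is the one-dimensional socle of $P$. Hence $\dim_k(xy\cdot(V_{2n}(\fp)\otimes V_{2m}(\fp))) = r$, reducing the problem to a rank computation for the ``$xy$'' operator on the tensor product.

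The next step will show this operator is the same $k$-linear map for all four Hopf algebras. The action of $xy$ is $\Delta(xy) = \Delta(x)\Delta(y)$. Expanding this product for each $\Delta \in \{\widetilde\Delta, \Delta_1, \Delta_2, \Delta_3\}$ from \ref{WangClassification}, and using $x^2 = y^2 = 0$, one finds in every case
\[\Delta(xy) \equiv x\otimes y + y\otimes x \pmod{xy\otimes A + A\otimes xy}.\]
Since $xy$ annihilates each band module, the ideal $xy\otimes A + A\otimes xy$ acts as zero on $V_{2n}(\fp)\otimes_k V_{2m}(\fp)$. Thus the ``$xy$'' operator on the tensor product is the common $k$-linear map $\phi := x\otimes y + y\otimes x$ regardless of the Hopf algebra structure.

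The final step computes $\rank\phi$. The map $\phi$ vanishes on $\soc(V_{2n})\otimes V_{2m} + V_{2n}\otimes\soc(V_{2m})$ and lands in $\soc(V_{2n})\otimes\soc(V_{2m})$, so it factors through a map $\bar\phi$ between spaces of dimension $nm$. Using Ba\v{s}ev's explicit band-module descriptions, the induced maps $\bar x, \bar y : V_{2n}(\fp)/\soc \to \soc(V_{2n}(\fp))$ take matrices $(I_n, \lambda I_n + N_n)$ for $\fp = [\lambda : 1]$ and $(N_n, I_n)$ for $\fp = [1 : 0]$, where $N_n$ is the standard $n\times n$ nilpotent shift. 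Because $p = 2$ gives $2\lambda I_{nm} = 0$, the matrix of $\bar\phi$ reduces in every case to the universal operator
\[T_{n,m} := N_n\otimes I_m + I_n\otimes N_m.\]
The kernel of $T_{n,m}$ admits a clean antidiagonal description: writing $T_{n,m}c = 0$ in coefficient coordinates yields $c_{k+1,l} + c_{k,l+1} = 0$ throughout (with the convention that $c_{ij}=0$ outside the rectangle), which in characteristic $2$ forces $(c_{ij})$ to be constant on each antidiagonal $i+j = s$ and to vanish on those with $s \geq \min(n,m)$. This gives $\dim\ker T_{n,m} = \min(n,m)$, hence $r = nm - \min(n,m)$, as required.

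The hard part will be the characteristic-$2$ linear algebra in the final step: the classical Clebsch--Gordan decomposition for $\mathfrak{sl}_2$ is unavailable and a priori the rank of $T_{n,m}$ could depend on the characteristic; however the reduction to this universal $0$--$1$ matrix, together with the characteristic-independent antidiagonal description of its kernel, bypasses these concerns. A secondary subtlety is the need to handle uniformly the two parametrization regimes for $\fp \in \PP^1$ (the generic $[\lambda:1]$ and the exceptional $[1:0]$), but the symmetry between $x$ and $y$ in the definition of $\phi$ makes both reduce to the same $T_{n,m}$.
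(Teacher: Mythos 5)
Your proposal is correct and follows essentially the same route as the paper's proof: identify the projective multiplicity with the rank of the operator representing $xy$ on $V_{2n}(\fp)\otimes V_{2m}(\fp)$, observe that modulo the annihilator ideal $(xy\otimes 1,\,1\otimes xy)$ every one of $\widetilde\Delta(xy), \Delta_1(xy), \Delta_2(xy), \Delta_3(xy)$ acts as the single matrix $x\otimes y + y\otimes x$, and then compute that this matrix has rank $nm-\min(n,m)$. The only differences are cosmetic: you reach "the nonprojective part is killed by $xy$" via Ba{\v s}ev's classification and the support axiom (S5) rather than the paper's observation that $xy$ annihilates every non-free indecomposable over the self-injective local algebra $A$, and you supply explicitly (via the antidiagonal kernel description of $N_n\otimes I_m + I_n\otimes N_m$) the rank verification that the paper declares "easily verified" and only carries out later in the case $n=m$.
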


\begin{lemma}\label{basevLemma}\cite{Basev61}
        Let $R$ be a (nonunital) associative, commutative ring with a $\ZZ$-linear basis $\{x_s \mid s = 1, 2, \dots\}$ ; $x_0 = 0$. Assume that each product $x_mx_n$ is a nonnegative integer combination of the $x_s$ for $s > 0$, and further that $x_1x_2 = 2x_1$. 
    If the $\ZZ$-linear functional $f : R \to \ZZ$ defined by $f(x_s) = s$ satisfies $f(x_sx_t) = 2\min(s, t)$, then $x_mx_n = 2x_{\min(m, n)}$ whenever $m \neq n.$
\end{lemma}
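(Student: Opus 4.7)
The plan is to use strong induction on $n$, establishing at step $n$ that $x_m x_n = 2 x_m$ for all $1 \le m < n$; commutativity then covers all pairs with $m \ne n$. The base $n = 2$ is exactly the hypothesis $x_1 x_2 = 2 x_1$.

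First I dispose of $m = 1$. Writing $x_1 x_n = \sum_s c_s x_s$ with $c_s \in \ZZ_{\ge 0}$, the constraint $\sum_s s c_s = f(x_1 x_n) = 2$ leaves only $x_1 x_n = 2 x_1$ or $x_1 x_n = x_2$. If the latter held, associativity gives $x_1^2 x_n = x_1(x_1 x_n) = x_1 x_2 = 2 x_1$; since $f$ forces $x_1^2 \in \{2 x_1, x_2\}$, either choice yields $f(x_1^2 x_n) = 4 \ne 2 = f(2 x_1)$, a contradiction.

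For $m \ge 2$ the main tool is the Frobenius-type identity $f(x_\ell(x_m x_n)) = f((x_\ell x_m) x_n)$ for every $\ell \in \{1, \ldots, n-1\} \setminus \{m\}$. The inductive hypothesis evaluates $x_\ell x_m$ as $2 x_\ell$ when $\ell < m$ and as $2 x_m$ when $m < \ell < n$, reducing the identity to the linear system
\[
\sum_s c_s \min(\ell, s) \;=\; 2\min(\ell, m).
\]
Iterating $\ell = 1, 2, \ldots, m-1$ forces $c_s = 0$ for $s \le m - 2$; then $\ell = m+1$ and $\ell = m+2$, when these are at most $n-1$, force $c_s = 0$ for $s \ge m + 2$ and yield $2 c_{m-1} + c_m = 2$ together with $c_{m-1} + c_m + c_{m+1} = 2$. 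In the boundary cases $m \in \{n-2, n-1\}$, where one or both of those $\ell$ exceed $n - 1$, I substitute direct enumeration of nonnegative integer solutions to $\sum c_s = 2$ and $\sum s c_s = 2m$ with $c_s = 0$ for $s \le m - 2$; the same conclusion emerges. In every case the integer solutions reduce to only two candidates, $x_m x_n \in \{2 x_m,\, x_{m-1} + x_{m+1}\}$, and the identical analysis applied to $x_m^2$ produces $x_m^2 \in \{2 x_m,\, x_{m-1} + x_{m+1}\}$.

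Finally I rule out $x_m x_n = x_{m-1} + x_{m+1}$ by computing $x_m(x_m x_n)$ in two ways. Direct expansion is $2 x_{m-1} + 2 x_m$ when $m + 1 < n$ (using the outer hypothesis on $x_m x_{m+1}$) and $3 x_{m-1} + x_{m+1}$ when $m + 1 = n$ (substituting the bad form itself for $x_m x_{m+1}$). The associative rearrangement $x_m^2 \cdot x_n$ equals $2 x_{m-1} + 2 x_{m+1}$ when $x_m^2 = 2 x_m$, which is inconsistent with the direct form by linear independence of distinct basis elements; when $x_m^2 = x_{m-1} + x_{m+1}$, applying $f$ gives $4m$ on the rearranged side (by the $f$-hypothesis on each summand) but $4m - 2$ on the direct side, again impossible. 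The main obstacle is the loss of $\ell$-equations in the boundary range $m \ge n - 2$, which forces the integer-enumeration detour; pleasantly, the final contradiction step is purely global through $f$, so no sub-induction on $m$ within step $n$ is required.
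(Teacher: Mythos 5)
Your proof is correct: the strong induction on $n$, the elimination of coefficients via $f(x_\ell(x_m x_n)) = f((x_\ell x_m) x_n)$ for $\ell \in \{1,\dots,n-1\}\setminus\{m\}$, the reduction to the two candidates $2x_m$ and $x_{m-1}+x_{m+1}$ (likewise for $x_m^2$), and the final contradiction from computing $x_m(x_m x_n)=(x_m x_m)x_n$ two ways all check out; the only genuine boundary case is $m=n-1$, since for $m=n-2$ the equation at $\ell=m+1=n-1$ is still available and $\ell=m+2$ is never actually needed, but your direct enumeration covers this harmlessly. The paper itself offers no argument, dismissing the lemma as ``a tedious exercise in induction'' with a citation to Ba\v{s}ev, so your write-up simply supplies, correctly, the induction the paper has in mind.
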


\begin{cor}\label{caseNM}
    Let $n > m \in \ZZ_{> 0}$ and $\fp \in \PP^1.$ Then for $\otimes = \widetilde\otimes,  \otimes_1, \otimes_2, \otimes_3,$ we have
    \[V_{2n}(\fp)\otimes V_{2m}(\fp) = 2V_{2m}(\fp)\oplus (nm - m)P.\]
\end{cor}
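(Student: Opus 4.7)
The plan is to combine Proposition \ref{kleinProjComp} with the arithmetic Lemma \ref{basevLemma}, reducing the whole family of products to a single base-case computation that I carry out by restriction along a noble $\pi$-point.

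First I apply Proposition \ref{kleinProjComp} with $\min(n,m)=m$ to write $V_{2n}(\fp)\otimes V_{2m}(\fp)\cong V\oplus (nm-m)P$ with $V$ having no projective summand. Support satisfies the tensor product property (S5), so $\supp(V)\subseteq\{\fp\}$, and the only indecomposables of $\cC(\fp)$ with support exactly $\{\fp\}$ are the $V_{2r}(\fp)$; hence $V=\bigoplus_r a_r V_{2r}(\fp)$. I then package these decompositions into Ba\v{s}ev's ring: let $R$ be the free abelian group on symbols $x_r$, $r\ge 1$, with multiplication $x_sx_t=\sum_r a^r_{s,t}\,x_r$ read from the decomposition of $V_{2s}(\fp)\otimes V_{2t}(\fp)$. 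Commutativity and associativity of $\otimes$, together with nonnegativity of the structure constants, make $R$ satisfy every hypothesis of Lemma \ref{basevLemma} \emph{except} the base case. The linear form $f(x_r)=r$ satisfies $f(x_sx_t)=2\min(s,t)$ by a dimension count: $\dim(V_{2s}(\fp)\otimes V_{2t}(\fp))=4st$, the projective part contributes $4(st-\min(s,t))$, so $\sum_r a^r_{s,t}\cdot 2r=4\min(s,t)$.

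Next I establish the base case $V_2(\fp)\otimes V_4(\fp)\cong 2V_2(\fp)\oplus P$; the only dimensional alternative compatible with the first paragraph is $V_4(\fp)\oplus P$. To distinguish, I restrict along a $\pi$-point $\alpha$ representing $\fp$ that is a Hopf-subalgebra inclusion (available because $\fp$ is noble for the ambient Hopf structure), which makes restriction commute with $\otimes$. I claim $V_4(\fp)|_\alpha\cong 2J_1\oplus J_2$: if $\alpha(t)$ acted as zero on $V_4(\fp)$, the $A$-action would factor through $A/\alpha(t)A\cong k[s]/s^2$, whose indecomposables have dimension at most $2$, contradicting that $V_4(\fp)$ is a four-dimensional indecomposable; and if $\alpha(t)$ had maximal rank $2$ then $V_4(\fp)|_\alpha$ would be projective, contradicting $\fp\in\supp V_4(\fp)$; so $\alpha(t)$ has rank $1$ and the Jordan type is $2J_1\oplus J_2$. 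Similarly $V_2(\fp)|_\alpha\cong 2J_1$ because any representative of $\fp$ annihilates $V_2(\fp)=A/\alpha(\fp)(t)A$ and the restriction has dimension $2$. Example \ref{exGa1} then gives $V_2(\fp)|_\alpha\otimes V_4(\fp)|_\alpha\cong 4J_1\oplus 2J_2$, which matches the restriction of $2V_2(\fp)\oplus P$ but not that of $V_4(\fp)\oplus P$, namely $2J_1\oplus 3J_2$. Hence $V_2(\fp)\otimes V_4(\fp)\cong 2V_2(\fp)\oplus P$; Lemma \ref{basevLemma} delivers $x_nx_m=2x_m$ for $n>m$, and the first paragraph converts this back into the claimed module decomposition.

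The main obstacle I anticipated was a uniform identification of $V_4(\fp)|_\alpha$ across all four Hopf structures and all their noble points, since the noble $\pi$-points at points like $[1:1]$ under $\Delta_3$ are not of the simple form $t\mapsto x$ or $t\mapsto y$. The reformulation via the structure of $A/\alpha(t)A$ sidesteps any case-by-case explicit presentation of $V_4(\fp)$, using only that $V_4(\fp)$ is an indecomposable of dimension $4$ with support $\{\fp\}$.
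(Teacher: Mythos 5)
Your overall strategy is the paper's: Proposition \ref{kleinProjComp} supplies the projective component, a dimension count gives the functional condition $f(x_sx_t)=2\min(s,t)$, and Lemma \ref{basevLemma} then forces the off-diagonal products in the Green semiring modulo $P$. The gap is in your base case. Corollary \ref{caseNM} is asserted for \emph{every} $\fp\in\PP^1$ and each of $\widetilde\otimes,\otimes_1,\otimes_2,\otimes_3$, with no nobility hypothesis; indeed its role in the proof of Theorem \ref{restatedIntroKlein} is precisely that the non-square products obey Ba{\v s}ev's formula regardless of whether $\fp$ is noble for the chosen group scheme. Your verification of $x_1x_2=2x_1$ restricts along a $\pi$-point representing $\fp$ that is a Hopf-subalgebra inclusion, which you note is ``available because $\fp$ is noble for the ambient Hopf structure.'' But for $\otimes_1$ only $[1:0]$ is noble, for $\otimes_2$ only $[1:0]$ and $[0:1]$, and for $\otimes_3$ only the $\FF_2$-rational points; at a generic point $[a:1]$ with $a\notin\FF_2$ no representative of $\fp$ is a Hopf map for the relevant comultiplication, restriction along $\alpha$ is then not monoidal, and the identification of $(V_2(\fp)\otimes_i V_4(\fp))\!\downarrow_\alpha$ with $V_2(\fp)\!\downarrow_\alpha\,\widetilde\otimes\,V_4(\fp)\!\downarrow_\alpha\cong 4J_1\oplus 2J_2$ is unjustified. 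As written, your argument proves the formula only at noble pairs $(\fp,\otimes)$, which misses exactly the cases the corollary is needed for.

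The repair requires no restriction argument at all: in your ring $R$, associativity (which holds because $\otimes$ is associative up to isomorphism and the projectives form a $\otimes$-ideal) together with the functional condition already forces the base case. If $x_1x_2$ were $x_2$, the only alternative with $f$-value $2$, then $f\bigl(x_1(x_1x_2)\bigr)=f(x_1x_2)=2$, while $x_1^2$ is either $2x_1$ or $x_2$ and in both cases $f\bigl((x_1x_1)x_2\bigr)=4$, a contradiction; hence $x_1x_2=2x_1$ uniformly in $\fp$ and in the choice of product. With that observation your proof collapses to the paper's one-line deduction: apply Lemma \ref{basevLemma} to the semiring of the $V_{2s}(\fp)$ with products taken modulo $P$, the hypotheses now being verified for all $\fp\in\PP^1$ and all four tensor products. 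Your noble-point restriction computation is still a nice sanity check (and is essentially Lemma \ref{kleintheorembasecase} in spirit), but it cannot carry the ignoble cases.
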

The proof of Proposition \ref{kleinProjComp} comes from the fact that $P = A$ is an injective module over $A$, and has a linear basis $1 , x, y, xy.$ So the element $xy$ annihilates any indecomposable module which is not free. It follows that for the module $M = V_{2n}(\fp)\otimes V_{2m}(\fp)$, the rank of the matrix in $\End_k(M)$ representing $xy \in A$ is equal to the rank of the largest free submodule (a direct summand) of $M$. Then since the ideal $(xy\otimes 1, 1\otimes xy) \triangleleft A\otimes A$ is in the $A\otimes A$-annihilator for $M$, we see all the matrices in $\End_k(M)$ represented by the elements
\[\widetilde \Delta(xy),\quad \Delta_1(xy), \quad \Delta_2(xy), \quad \Delta_3(xy)\in A \otimes A\]
are the same as that of $x \otimes y + y \otimes x \in A\otimes A.$ After properly defining the $A$-modules $V_{2n}(\fp)$ for $n =1, 2, \dots,$ it is easily verified that the matrix representing $x\otimes y + y \otimes x$ indeed has rank $(nm - m)$. Lemma \ref{basevLemma} is a tedious exercise in induction. Corollary \ref{caseNM} is immediate from taking $R$ to be the semiring generated by the indecomposables $x_s = V_{2s}$ with products $\otimes$ defined modulo $P$, and applying Lemma \ref{basevLemma}. 

All that is left to verify the formula \ref{basevRule} is the square in each dimension
\begin{equation}\label{squareForm}
    V_{2n}(\fp) \otimes V_{2n}(\fp) = 2V_{2n}(\fp) \oplus (n^2 - n)P
\end{equation}
when $\fp$ is noble for the group scheme $G$ having product $\otimes = \widetilde\otimes, \otimes_1, \otimes_2, \otimes_3.$
For this, before defining the modules $V_{2n}(\fp)$, we state two more tedious results pertaining to Lemma \ref{basevLemma}. 
\begin{prop}\label{basevRing}
    Let $S$ be any subset of the natural numbers such that no two consecutive numbers are elements of $S$. 
    Let $R = \langle x_i\rangle_{i = 1}^\infty$ be the free abelian group, denoting $x_0 =0,$ with multiplication defined by 
    \[x_sx_t = \begin{cases}
        2x_{\min(s, t)} & s\neq t\\
        2x_{s} & s = t \not\in S\\
        x_{s - 1} + x_{s + 1} & s = t \in S,        
    \end{cases}\]
    extended linearly. 
    Then $R$ is an associative, commutative ring admitting a functional $f : R \to \ZZ$ as in Lemma \ref{basevLemma}. 
\end{prop}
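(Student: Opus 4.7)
The plan is to verify the three claims of the proposition---commutativity, the existence of the functional $f$, and associativity---separately, and in that order. Commutativity is immediate: each of the three cases defining $x_sx_t$ is manifestly symmetric in $s$ and $t$, so one has $x_sx_t = x_tx_s$ on basis elements and the relation extends to all of $R$ by bilinearity.

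For the functional, I define $f$ on the basis by $f(x_s)=s$ and extend $\ZZ$-linearly. To check $f(x_sx_t)=2\min(s,t)$, I consider the three cases of the multiplication rule: if $s\neq t$ then $f(2x_{\min(s,t)})=2\min(s,t)$; if $s=t\notin S$ then $f(2x_s)=2s$; and if $s=t\in S$ then $f(x_{s-1}+x_{s+1})=(s-1)+(s+1)=2s$. Each case is immediate.

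The substantive work is associativity, which I handle by cases on the multiset $\{r,s,t\}$. When the three indices are pairwise distinct, both $(x_rx_s)x_t$ and $x_r(x_sx_t)$ expand by two applications of the $s\neq t$ rule to $4x_{\min(r,s,t)}$. When all three indices coincide, associativity is trivial by commutativity. The remaining case has exactly two indices equal, so by commutativity it suffices to compute $(x_sx_s)x_t$ and $x_s(x_sx_t)$ for $s\neq t$. If $s\notin S$, then $x_sx_s=2x_s$ reduces the question to a double application of the distinct-index rule, and both sides evaluate to $4x_{\min(s,t)}$.

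The subcase $s\in S$ is where I expect the work to concentrate, and it is also the only place where the consecutive-exclusion hypothesis enters. I expand $(x_sx_s)x_t=(x_{s-1}+x_{s+1})x_t$ and compare with $x_s(x_sx_t)=2x_sx_{\min(s,t)}$, splitting further on the position of $t$ relative to $s-1,s+1$. In the boundary subcases $t=s\pm 1$, the computation of $(x_{s-1}+x_{s+1})x_t$ requires evaluating $x_{s-1}x_{s-1}$ or $x_{s+1}x_{s+1}$; since $s\in S$ forces $s-1,s+1\notin S$ by hypothesis, both of these square to $2x_{s\pm 1}$ rather than producing the ``new'' terms $x_{s-2}+x_s$ or $x_s+x_{s+2}$ that would otherwise break associativity. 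With this, every subcase collapses to $4x_t$ when $t<s$ and to $2(x_{s-1}+x_{s+1})$ when $t>s$, matching $x_s(x_sx_t)$ computed from the already-verified value of $x_sx_{\min(s,t)}$. The main obstacle is purely bookkeeping; the hypothesis on $S$ is precisely designed to prevent a cascade of new $x_j^2$ expansions, and once this is observed the case analysis is finite and mechanical.
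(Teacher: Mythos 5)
Your proof is correct, and it is exactly the verification the paper has in mind (the paper states this proposition without proof, calling it a ``tedious result''): reduce by commutativity to the triple $(x_s x_s)x_t$ versus $x_s(x_s x_t)$, split on the position of $t$ relative to $s-1,\, s+1$, and use that $s \in S$ forces $s \pm 1 \notin S$ so the squares $x_{s\pm1}x_{s\pm1}$ collapse to $2x_{s\pm1}$, yielding $4x_t$ for $t<s$ and $2(x_{s-1}+x_{s+1})$ for $t>s$ on both sides. The only point worth a passing word is the edge case $s=1\in S$, which is handled automatically by the convention $x_0=0$, so nothing in your argument needs to change.
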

\begin{lemma}\label{greenForKlein}
    Let $R$ be a ring with functional $f : R \to \ZZ$ as in Lemma \ref{basevLemma}. Then there is some subset $S$ of the positive integers such that $R$ is the ring defined in Proposition \ref{basevRing}.
\end{lemma}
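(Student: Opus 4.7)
The plan is to use Lemma \ref{basevLemma}, which already fixes $x_m x_n = 2 x_{\min(m,n)}$ whenever $m \neq n$, and reduce the problem to determining each square $x_s^2$. Writing $x_s^2 = \sum_{i \geq 1} a_i^{(s)} x_i$ with $a_i^{(s)} \in \ZZ_{\geq 0}$, the functional $f$ immediately gives the weight constraint $\sum_i i\, a_i^{(s)} = 2s$. The goal is to show that each $x_s^2$ is either $2 x_s$ or $x_{s-1} + x_{s+1}$, and that the set $S := \{\, s : x_s^2 \neq 2 x_s \,\}$ contains no two consecutive integers; comparison with the multiplication table in Proposition \ref{basevRing} then finishes the proof.

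For $s \geq 2$ I would first cut down the possibilities for $x_s^2$ by expanding the associativity identity $(x_s^2)\, x_1 = x_s(x_s x_1) = 4 x_1$. Using $x_i x_1 = 2 x_1$ for $i \geq 2$ together with either $x_1^2 = 2 x_1$ or $x_1^2 = x_2$ (the only two values consistent with the weight constraint at $s = 1$), this should force $\sum_i a_i^{(s)} = 2$, leaving $x_s^2 \in \{2 x_s\} \cup \{\, x_j + x_k : 1 \leq j < k,\ j + k = 2s\, \}$. Assuming the second form, I would then apply the general identity $(x_s^2)\, x_t = x_s(x_s x_t)$ to a carefully chosen sequence of $t$: the range $j < t < s$ with $t \neq k$ should give $2 x_j + 2 x_t = 4 x_t$, which is impossible unless no such $t$ exists, forcing $j = s - 1$; symmetrically, $s < t < k$ should force $k = s + 1$; and taking $t = j = s - 1$ and $t = k = s + 1$ should then extract $x_{s-1}^2 = 2 x_{s-1}$ and $x_{s+1}^2 = 2 x_{s+1}$, i.e., $s - 1,\ s + 1 \notin S$. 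The edge case $s = 1 \in S$ (meaning $x_1^2 = x_2$) is handled separately by the associativity $x_2^2 = (x_1 x_1) x_2 = x_1(x_1 x_2) = 2 x_1^2 = 2 x_2$, which forces $2 \notin S$.

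I expect the main obstacle to be organizational rather than conceptual: no individual computation is hard, but one must exhaust enough associativity identities (in an order that makes the values $x_r^2$ for $r < s$ available when needed) to eliminate every alternative to $(j, k) = (s - 1,\, s + 1)$ and to extract the no-two-consecutive condition on $S$. A minor nuisance is that the reduction via $(x_s^2)\, x_1$ does not apply at $s = 1$, so the relation between $1 \in S$ and $2 \notin S$ must be pulled out of a separate associativity identity, as indicated above.
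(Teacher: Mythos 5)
The paper never writes out a proof of this lemma: it is introduced, together with Proposition \ref{basevRing}, as one of ``two more tedious results pertaining to Lemma \ref{basevLemma},'' so there is no official argument to compare yours against. Your outline is correct and would serve as a complete proof once written out. The key checks all go through: from $f(x_s^2)=2s$ the square $x_s^2=\sum_i a_i x_i$ has weight $2s$; for $s\ge 2$ the associativity $(x_s^2)x_1=x_s(x_sx_1)=4x_1$ forces $\sum_i a_i=2$ in both of the cases $x_1^2=2x_1$ and $x_1^2=x_2$ (in the latter case it also forces $a_1=0$), so $x_s^2=x_j+x_k$ with $j\le k$, $j+k=2s$, and $j\ge 1$ since products lie in the span of the $x_i$ with $i>0$. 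If $j<s<k$, then for any $t$ with $j<t<s$ the identity $(x_s^2)x_t=x_s(x_sx_t)$ reads $2x_j+2x_t=4x_t$, impossible, so $j=s-1$; and $k=s+1$ then follows already from $j+k=2s$, so your symmetric argument on the right is redundant (though harmless). Multiplying $x_s^2=x_{s-1}+x_{s+1}$ by $x_{s-1}$ and by $x_{s+1}$ gives $x_{s-1}^2=2x_{s-1}$ and $x_{s+1}^2=2x_{s+1}$, which is exactly the no-two-consecutive condition on $S=\{s\mid x_s^2\ne 2x_s\}$, and your separate treatment of $s=1$ (using $x_1(x_1x_2)=2x_1^2$ and the hypothesis $x_1x_2=2x_1$) correctly yields $2\notin S$ and matches the convention $x_0=0$ in Proposition \ref{basevRing}. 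Together with Lemma \ref{basevLemma} for the off-diagonal products, this identifies the multiplication table with the one in Proposition \ref{basevRing}, as required.
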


Now let us define the modules $V_{2n}(\fp)$ in Ba{\v s}ev's classification theorem below. We omit the classification of modules supported everywhere in $\Xscr(A)$, which includes the structure of the syzygy modules $\Omega^n k = \Sigma^{-n} k$ for $n \in \ZZ$, and that these are all such everywhere-supported indecomposable modules up to isomorphism.

\begin{thm}\label{basevClassificationThm}(Ba{\v s}ev, 1961, \cite{Basev61})
    Let $\fp = [a : b] \in \Xscr(A) = \PP^1$, and define $s_2 = ax + by \in A$ and $s_1 = cx + dy$ such that $s_1, s_2$ forms a basis for the subspace $\langle x , y\rangle \subset A.$
    
    Let $M$ denote a vector space of dimension $2n$, with $k$-linear decomposition into \emph{lower and upper} blocks $M = M_\ell \oplus M_{u}$, with $M_\ell, M_u$ each of dimension $n$. We define $V_{2n}(\fp) = M$ to be the $A$-module defined by following matrix representations of the actions of $s_1, s_2 \in A$
    \[s_1 : \begin{pmatrix}0 & I_n\\0 &0\end{pmatrix},\quad s_2 :
    \begin{pmatrix}0 &\mathfrak{N}_n\\0 &0    \end{pmatrix}, \]
    where $I_n$ is the diagonal ones matrix and $\mathfrak{N}_n$ is an upper triangular nilpotent Jordan block of rank $n - 1.$

    Then we have that
    \begin{enumerate}[I.]
        \item The module $V_{2n}(\fp)$ is, up to isomorphism, not dependent on choice of $a, b, c, d,$ such that $\fp = [a : b] \in \PP^1$, and such that $s_1 = cx + dy$ and $s_2$ are linearly independent,
        \item The module $V_{2n}(\fp)$ is indecomposable,
        \item The support $\Xscr(A, V_{2n}(\fp))$ is $\{\fp\} \subset \PP^1,$ and
        \item Any finite indecomposable module $V$ with support $\Xscr(A,V) = \{\fp\}$ is of even dimension $2n$ and is isomorphic to $V_{2n}(\fp)$, for some $n$. 
    \end{enumerate}
\end{thm}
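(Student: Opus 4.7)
The plan is to address the four claims in order; claims I--III are direct matrix computations once the block decomposition is fixed, while claim IV---the classification---requires genuine structural work.

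For claim I, any two compatible pairs $(s_1,s_2)$ differ by a combination of scaling $s_2 \mapsto \mu s_2$ (which preserves the line $[a:b]$) and shearing $s_1 \mapsto s_1 + \lambda s_2$ (which preserves linear independence of the basis). Under the scaling, simultaneous conjugation of both blocks by $\diag(1,\mu,\dots,\mu^{n-1})$ restores $\mathfrak{N}_n$ as the upper-right block of $s_2$ and fixes the identity block of $s_1$. Under the shear, the new $s_1'$ has upper-right block $I+\lambda\mathfrak{N}_n$; conjugating by $T_\ell = I$, $T_u = (I+\lambda\mathfrak{N}_n)^{-1}$ restores $I$ for $s_1'$ and sends the upper-right block of $s_2$ to the regular nilpotent $\mathfrak{N}_n(I+\lambda\mathfrak{N}_n)^{-1}$, which is conjugate to $\mathfrak{N}_n$ by a further simultaneous conjugation on both blocks.

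For claim II, I would compute $\End_A(V_{2n}(\fp))$ directly. Commutation with $s_1$ forces endomorphisms to have block form $\bigl(\begin{smallmatrix} B & C \\ 0 & B \end{smallmatrix}\bigr)$, i.e.\ equal diagonal blocks and vanishing lower-left. Further commutation with $s_2$ imposes $B\mathfrak{N}_n = \mathfrak{N}_n B$, so $B \in k[\mathfrak{N}_n]$, a local ring. The maximal ideal of $\End_A(V_{2n}(\fp))$ consists of those endomorphisms with $B$ of zero constant term, so $\End_A(V_{2n}(\fp))$ is local and $V_{2n}(\fp)$ is indecomposable.

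For claim III, the canonical $\pi$-point $t \mapsto s_2$ restricts $V_{2n}(\fp)$ to the $k[t]/t^2$-module where $t$ acts as $\mathfrak{N}_n$ on an $n$-dimensional summand, so the action has rank $n-1 < n$ and the restriction is not free; hence $\fp \in \Xscr(A,V_{2n}(\fp))$. For any $\fq = [a':b'] \neq \fp$, a representative $\pi$-point takes the form $t \mapsto a'x+b'y+\mu xy$; since $xy$ is proportional to $s_1 s_2$, which annihilates $V_{2n}(\fp)$ by direct matrix product, the higher-order term drops out, and in the $(s_1,s_2)$-basis the image becomes $\nu s_1 + \mu' s_2$ with $\nu \neq 0$. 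This acts as the invertible matrix $\nu(I + (\mu'/\nu)\mathfrak{N}_n)$ on $M_u \to M_\ell$, so the restriction has rank $n$ and is free; thus $\fq \notin \Xscr(A,V_{2n}(\fp))$.

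For claim IV, let $M$ be a finite indecomposable $A$-module with $\Xscr(A,M) = \{\fp\}$. \emph{Step 1:} $A$ is local self-injective with simple socle $\soc(A) = k\cdot xy$, and every nonzero ideal of $A$ contains $xy$; hence if $xy\cdot m \neq 0$ for some $m \in M$, then $\Ann_A(m) = 0$, so $Am \cong A$ is an injective, hence split, submodule of $M$. Since $M$ is indecomposable and non-projective (otherwise $\Xscr(A,M) = \emptyset$), we conclude $xy\cdot M = 0$. \emph{Step 2:} Since $[c:d] \neq \fp$, the $\pi$-point $t \mapsto s_1$ is not in $\Xscr(A,M)$, so $s_1$ acts freely on $M$ of rank $n := \dim_k M/2$; this gives a decomposition $M = M_\ell \oplus M_u$ with $M_\ell = \ker(s_1) = s_1 M$ and $s_1: M_u \xrightarrow{\sim} M_\ell$, so that $M$ has even dimension and $s_1$ takes the desired block form. \emph{Step 3:} The relation $[s_1,s_2]=0$ forces $s_2 = \bigl(\begin{smallmatrix} \phi & \psi \\ 0 & \phi \end{smallmatrix}\bigr)$ for some $\phi\colon M_\ell \to M_\ell$ and $\psi\colon M_u \to M_\ell$. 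The annihilation $xy\cdot M = 0$ reads $s_1 s_2 = 0$ on $M$, which matrix-wise gives $\phi = 0$; the relation $s_2^2 = 0$ then reduces to the automatic identity $\phi\psi = \psi\phi$ in characteristic $2$. \emph{Step 4:} The condition $\Xscr(A,M) = \{\fp\}$ forces $\psi$ to be nilpotent, for any nonzero eigenvalue $\lambda \in k$ of $\psi$ would make the $\pi$-point $t \mapsto -\lambda s_1 + s_2$, a point of $\PP^1$ distinct from $\fp$, restrict to a non-free module. \emph{Step 5:} The residual change-of-basis freedom preserving $s_1$ is simultaneous conjugation of both blocks by $T \in GL(M_\ell)$, together with a shear $\bigl(\begin{smallmatrix} I & X \\ 0 & I \end{smallmatrix}\bigr)$ whose effect on $\psi$ is $\psi \mapsto \psi + [\phi,X] = \psi$ because $\phi=0$. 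Thus the isomorphism class of $M$ is determined by the conjugacy class of the nilpotent $\psi$; indecomposability of $M$ forces a single Jordan block $\psi \sim \mathfrak{N}_n$, and $M \cong V_{2n}(\fp)$.

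The main obstacle is step 1 of claim IV---the socle argument extracting a free summand when $xy\cdot M \neq 0$---together with step 4, where the support hypothesis must be translated into nilpotency of $\psi$ via the family of $\pi$-points $t \mapsto -\lambda s_1 + s_2$. Once these structural facts are in place, the rest of the proof is matrix normalization of the same flavor as claim I.
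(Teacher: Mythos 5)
Your proof is correct, but note that the paper does not actually prove this theorem: it is quoted from Ba{\v s}ev's 1961 paper and used as a black box, so there is no internal argument to compare against. Your write-up is a complete, self-contained proof, and its ingredients are exactly of the flavor the paper uses in neighboring results: the socle/self-injectivity argument of your Step 1 (every nonzero ideal of $A$ contains $xy$, so a vector not killed by $xy$ generates a free, hence split, injective summand) is the same mechanism the paper invokes in Proposition \ref{kleinProjComp} to strip off projective components, and your use of the canonical $\pi$-points $t \mapsto s_1$ and $t\mapsto \lambda s_1 + s_2$ to force the block shape and the nilpotency of $\psi$ parallels the restriction arguments in Lemma \ref{PB'}. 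Three small points to tighten. First, in claim I the admissible changes are $s_2 \mapsto \mu s_2$ and $s_1 \mapsto \nu s_1 + \lambda s_2$; you treat the scaling of $s_2$ and the shear but not the rescaling $\nu$ of $s_1$ — it is handled by conjugating with $\mathrm{diag}(I,\nu I)$, which turns the upper-right block of $s_1$ back into $I$ and rescales $\mathfrak{N}_n$, which your diagonal-conjugation trick then absorbs, so this is a one-line addition rather than a gap. Second, to avoid any circularity you should source the coordinatization of $\pi$-point classes by $[a:b]\in\PP^1$ (every flat map is $t\mapsto ax+by+\mu xy$, with equivalence detected by the linear part) from the paper's Section on the two-dimensional abelian algebra, where it is derived from Friedlander--Pevtsova, and not from the module classification itself, since the paper's Klein-four example remarks that the coordinate description "can be seen after classifying all modules." Third, in Step 4 recall $p=2$, so $-\lambda s_1+s_2=\lambda s_1+s_2$; the argument is unaffected, since $[a+\lambda c : b+\lambda d]\neq[a:b]$ exactly because $ad-bc\neq 0$ and $\lambda\neq 0$, which is the only thing you use.
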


The following Lemma is proven with the exact same method used for computing $V(\fp)\widetilde\otimes V(\fp) \cong 2V(\fp)$ in \ref{PB'}, after noting $V(\fp) = V_2(\fp)$ and applying the curtailment Lemma \ref{curtailmentLemma}.
\begin{lemma}\label{kleintheorembasecase}
    Let $\fp \in \Xscr(A)$ be a noble $\pi$-point for a group scheme $G$ corresponding to a Hopf algebra structure $\Delta \in \Hscr$ on $A$, with $\otimes$ the product of $G$-representations. Then 
$V_2(\fp)\otimes V_2(\fp) \cong 2V_2(\fp).$
\end{lemma}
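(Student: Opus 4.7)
The plan is to use the restriction property at a noble $\pi$-point to reduce the computation to the one-dimensional Lie algebra case of Example \ref{introGa1}. First I apply the curtailment Lemma \ref{curtailmentLemma} to reduce verification to the four Hopf algebra representatives $\widetilde\Delta, \Delta_1, \Delta_2, \Delta_3$ of $\Hscr/\Aut(A)$ exhibited in \ref{WangClassification}; the isotropy hypothesis required by \ref{curtailmentLemma} follows from Ba\v{s}ev's classification (Theorem \ref{basevClassificationThm}), since twisting by $\varphi \in \Omega(A, \fp)$ preserves indecomposability, dimension, and support, hence fixes each $V_{2n}(\fp)$ up to isomorphism.

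Fix one of these four group schemes $G = G_i$ and a noble point $\fp = [a:b]$ for $G$. By Definition \ref{nobleDef} there exists a $\pi$-point $\alpha : k[t]/t^2 \to A$ equivalent to the canonical $\alpha(\fp)$ whose image is a Hopf subalgebra of $A$. Setting $s_2 = ax + by$ and completing to a basis $s_1, s_2$ of $\langle x, y\rangle$, this map takes the form $\alpha(t) = s_2 + \xi$ with $\xi$ in the ideal $(s_1, s_2)^2 \triangleleft A$. In characteristic $p = 2$ this ideal is one-dimensional, spanned by $s_1 s_2$, and the explicit matrix descriptions in Theorem \ref{basevClassificationThm} show that $s_1 s_2$ annihilates both $V_2(\fp)$ and $V_4(\fp)$. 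Consequently $\alpha^*(V_2(\fp))$ is the trivial two-dimensional $k[t]/t^2$-module $k \oplus k$.

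Since $\alpha$ is a Hopf subalgebra inclusion, the restriction property
\[
\alpha^*(V_2(\fp) \otimes V_2(\fp)) \;\cong\; \alpha^*(V_2(\fp)) \otimes_{k[t]/t^2} \alpha^*(V_2(\fp))
\]
holds, the right-hand tensor product using the Hopf structure induced on $k[t]/t^2$ (by Example \ref{introGa1} the resulting Green ring is independent of which Oort-Tate structure arises). The right-hand side is $4k$, annihilated by $t$. Proposition \ref{kleinProjComp} and the tensor-product property of support give that $V := V_2(\fp)\otimes V_2(\fp)$ is a four-dimensional module with no projective summand and support $\{\fp\}$; by Theorem \ref{basevClassificationThm} either $V \cong V_4(\fp)$ or $V \cong 2V_2(\fp)$. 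On $V_4(\fp)$ the element $\alpha(t) = s_2 + \xi$ acts as $s_2$ (rank one via $\mathfrak{N}_2$, with $\xi$ annihilating), whereas on $2V_2(\fp)$ it acts as zero; since $\alpha^*(V)$ is trivial, $V$ must be $2V_2(\fp)$.

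The key obstacle is ensuring that the higher-order correction $\xi$ inherent in passing from $\alpha(\fp)$ to a Hopf-subalgebra representative does not spoil either the identification of $\alpha^*(V_2(\fp))$ as trivial or the separation between $V_4(\fp)$ and $2V_2(\fp)$. This is precisely where the tame structure in characteristic $2$ is decisive: the ideal $(s_1,s_2)^2$ is small enough that its only element is absorbed by the nilpotence of $s_1$ on $V_{2n}(\fp)$. In the wild odd-characteristic setting of Theorem \ref{nPA}, analogous twists fail to annihilate the relevant modules, and it is exactly that failure which obstructs Property PA.
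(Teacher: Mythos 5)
Your proof is correct, and it takes a noticeably different route to the conclusion than the paper does. The paper's own proof is a one-line reduction: note $V_2(\fp)=V(\fp)$, invoke the curtailment Lemma \ref{curtailmentLemma} (with the isotropy hypothesis supplied by Theorem \ref{basevClassificationThm}, exactly as you argue), and then rerun the computation of Lemma \ref{PB'} at a noble point: nobility forces the obstruction term of $\Delta(s_2)$ modulo $\Sigma=(s_2\otimes 1,\,1\otimes s_2)$ to vanish (the coefficients $a^2$, resp. $a+a^2$, computed in \ref{PB'} are zero at the noble values of $a$ in characteristic $2$), so the product is annihilated by $s_2$; since the complementary point lies outside the support, the product is free along $s_1$, and a module killed by $s_2$ and free over $k[s_1]/(s_1^2)$ is a sum of copies of $A/As_2=V_2(\fp)$ — no appeal to the classification of indecomposables is needed for this base case. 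You instead argue abstractly from the noble Hopf-subalgebra representative $\alpha$: the restriction property gives $\alpha^*(V_2(\fp)\otimes V_2(\fp))\cong 4J_1$; Proposition \ref{kleinProjComp} (or simply the observation that the $4$-dimensional projective has empty support) together with Theorem \ref{basevClassificationThm} narrows the product to $V_4(\fp)$ or $2V_2(\fp)$; and the rank of $\alpha(t)=\lambda s_2+\mu\, s_1s_2$ (one on $V_4(\fp)$, zero on $2V_2(\fp)$) eliminates $V_4(\fp)$. What your version buys is uniformity and conceptual clarity: you never compute $\Delta_i(s_2)$ modulo $\Sigma$ case by case, only the definition of nobility enters, which is precisely the local-to-global mechanism advertised in \ref{ourpurpose}; the cost is a heavier reliance on Ba{\v s}ev's classification for the dichotomy, which the paper's direct identification avoids (though the paper needs Ba{\v s}ev anyway for the curtailment step). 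Two cosmetic remarks: the noble representative is only pinned down up to a nonzero scalar on its linear term and may a priori be defined over a field extension $K/k$; neither affects your argument, since rescaling and base change alter neither the triviality and rank computations nor the dichotomy between $V_4(\fp)$ and $2V_2(\fp)$.
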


Now we prove the formula \ref{squareForm}, giving an original proof of the computations of tensor products, first described in the case $G = G_3$ from \ref{WangClassification}, by Ba{\v s}ev \cite{Basev61} and Conlon \cite{Conlon65} without proof. 
\begin{thm}\label{nobleSquareTheorem}
    Let $G$ be the group scheme associated to one of the Hopf algebras $\Hscr / \Aut(A)  = \{\widetilde\Delta, \Delta_1, \Delta_2, \Delta_3\}$ from X. Wang's classification \ref{WangClassification}. Let $\otimes$ be the associated tensor product of representations of $G$. Then for each noble point $\fp$ for $G$ and each $n$, we have an isomorphism 
    \[V_{2n}(\fp)\otimes V_{2n}(\fp) \cong 2V_{2n}(\fp) \oplus (n^2 - n)P\] 
\end{thm}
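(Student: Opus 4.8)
The plan is to use everything already assembled to collapse the theorem to a single bit of information — whether $V_{2n}(\fp)$ itself occurs as a direct summand of $V_{2n}(\fp)\otimes V_{2n}(\fp)$ — and then to pin that bit down by an explicit, if lengthy, computation with the comultiplication matrices.

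\emph{Reduction.} By Proposition \ref{kleinProjComp} write $V_{2n}(\fp)\otimes V_{2n}(\fp)=V\oplus(n^2-n)P$ with $V$ having no projective summand and $\dim_k V=4n$. By the tensor property \ref{SD5} of cohomological support, $\Xscr(A,V)=\Xscr(A,V_{2n}(\fp))=\{\fp\}$, so Ba{\v s}ev's classification (Theorem \ref{basevClassificationThm}(IV)) forces $V$ to be a direct sum of modules $V_{2k}(\fp)$. Assembling Lemma \ref{basevLemma}, Corollary \ref{caseNM}, Lemma \ref{kleintheorembasecase}, Proposition \ref{basevRing} and Lemma \ref{greenForKlein}, the semiring $(\cC(\fp)/P,\otimes)$ is one of the rings of Proposition \ref{basevRing} for a set $S\subseteq\{2,3,\dots\}$ with no two consecutive integers; hence, for each $n$, either $V\cong 2V_{2n}(\fp)$ (when $n\notin S$) or $V\cong V_{2(n-1)}(\fp)\oplus V_{2(n+1)}(\fp)$ (when $n\in S$). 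Only the first is compatible with $V_{2n}(\fp)$ being a summand of $V$, so it suffices to rule out $n\in S$, i.e. to show that $V_{2n}(\fp)$ is a direct summand of $V_{2n}(\fp)\otimes V_{2n}(\fp)$.

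\emph{A numerical invariant.} I would detect the summand by a $\Hom$-count. From Ba{\v s}ev's matrix description, an $A$-linear map $V_{2k}(\fp)\to N$ is the same as a tuple $(w_1,\dots,w_k)\in N^k$ satisfying an explicit linear recursion in the two structure operators $s_1,s_2$ of Theorem \ref{basevClassificationThm} (namely $s_2w_1=0$ and $s_2w_j=s_1w_{j-1}$); solving it for $N=V_{2m}(\fp)$ gives $\dim_k\Hom_A(V_{2k}(\fp),V_{2m}(\fp))=km+\min(k,m)$, while self-injectivity of $A$ gives $\dim_k\Hom_A(V_{2k}(\fp),P)=2k$. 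Therefore $\dim_k\Hom_A(V_{2n}(\fp),\,V_{2n}(\fp)\otimes V_{2n}(\fp))$ equals $2n^3+2n$ in the case $V\cong 2V_{2n}(\fp)$ but $2n^3+2n-1$ in the case $V\cong V_{2(n-1)}(\fp)\oplus V_{2(n+1)}(\fp)$. So the theorem reduces to verifying that this dimension is $2n^3+2n$.

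\emph{The computation.} Apply the same recursion with $N=V_{2n}(\fp)\otimes_k V_{2n}(\fp)$, where $s_1,s_2$ now act through $\Delta(s_1),\Delta(s_2)$ for $\Delta\in\{\widetilde\Delta,\Delta_1,\Delta_2,\Delta_3\}$. One must compute the dimension of the solution space of $s_2w_1=0$, $s_2w_j=s_1w_{j-1}$ inside the $4n^2$-dimensional space $V_{2n}(\fp)\otimes_k V_{2n}(\fp)$, uniformly in $n$; this is where nobility is used. For $\fp$ noble the canonical $\pi$-point $\alpha(\fp)\colon k[t]/t^2\hookrightarrow A$ with $\alpha(\fp)(t)=s_2$ (Proposition \ref{tautnoble}, Definition \ref{nobleDef}) is a Hopf-subalgebra inclusion, so $\Delta(s_2)=s_2\otimes1+1\otimes s_2$ independently of $\Delta$, and the restriction identity $\alpha(\fp)^*(M\otimes N)\cong\alpha(\fp)^*M\otimes\alpha(\fp)^*N$ controls $\ker\Delta(s_2)$ and $\im\Delta(s_2)$; concretely $\alpha(\fp)^*(V_{2n}(\fp)^{\otimes2})\cong 2(n^2-1)[2]\oplus4[1]$, from which (subtracting the $2(n^2-n)[2]$ coming from the $(n^2-n)P$ summand) one reads off the two copies of $V_{2n}(\fp)$ to peel away by induction on $n$. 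The discrepancies between the four comultiplications survive only in the $s_1$-term: the correction terms ($x\otimes x$ for $\Delta_1$, $s_2\otimes s_2$ for $\Delta_2,\Delta_3$, using $\omega(t)=t\otimes t$ in characteristic $2$) take values in the socle region $\langle f_i\otimes f_j\rangle$ of $V_{2n}(\fp)\otimes_k V_{2n}(\fp)$, which lies in $\im\Delta(s_2)$, so they do not change the count; for $G_3=(\ZZ/2)^2$ one may instead simply invoke the classical computation of Ba{\v s}ev \cite{Basev61} as corrected by Conlon \cite{Conlon65}.

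\emph{The main obstacle.} The real work is the last step: showing that the solution space of that linear recursion has exactly the ``two copies of $V_{2n}(\fp)$'' dimension for every $n$ and every one of the four comultiplications — equivalently, producing the single homomorphism $V_{2n}(\fp)\to V_{2n}(\fp)^{\otimes2}$ that is invisible both to every $\pi$-point (all restrictions $\alpha^*$ agree on $2V_{2n}(\fp)$ and on $V_{2(n-1)}(\fp)\oplus V_{2(n+1)}(\fp)$) and to the coarse classifying support data. All of the tensor-triangular input narrows the answer to an off-by-one ambiguity; closing that gap is an honest but mechanical matrix computation, and the one place where the hypothesis that $\fp$ is noble — so that $s_2$ generates a Hopf subalgebra and $\Delta(s_2)$ is forced to be primitive — is genuinely indispensable.
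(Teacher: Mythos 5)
Your reduction (split off the projective part by Proposition \ref{kleinProjComp}, use support and Ba{\v s}ev's classification to force $V\cong 2V_{2n}(\fp)$ or $V\cong V_{2(n-1)}(\fp)\oplus V_{2(n+1)}(\fp)$, then detect which by the dimension of $\Hom_A(V_{2n}(\fp),V_{2n}(\fp)\otimes V_{2n}(\fp))$) is sound and is a genuinely different detection device from the paper's, which instead tensor-squares the canonical monomorphism $V_{2n}(\fp)\hookrightarrow V_{2(n+1)}(\fp)$ and derives a contradiction from a Jordan-block count of the would-be cokernel after restricting to the subgroup representing $\fp$. But the step on which your whole proof rests is both unproven and mis-justified. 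First, the claim that nobility of $\fp$ makes the canonical $\pi$-point a Hopf-subalgebra inclusion ``so $\Delta(s_2)=s_2\otimes 1+1\otimes s_2$ independently of $\Delta$'' is false: nobility only provides some equivalent $\pi$-point whose \emph{image} is a Hopf subalgebra, and that subalgebra is typically generated by a group-like element minus $1$, not by a primitive. Concretely, $[1:0]$ is noble for $G_3=(\ZZ/2)^2$, yet $\Delta_3(s_2)=s_2\otimes 1+1\otimes s_2+s_2\otimes s_2$ (this extra $\fN_n\otimes\fN_n$ block appears explicitly in the paper's matrices); likewise $[0:1]$ for $G_2$; and at the noble point $[1:1]$ for $G_3$ the noble representative is $t\mapsto x+y+xy$, while the span of $1$ and $s_2=x+y$ is not even a subcoalgebra, so neither primitivity nor the restriction identity $\alpha(\fp)^*(M\otimes N)\cong\alpha(\fp)^*M\otimes\alpha(\fp)^*N$ holds for your canonical $\alpha(\fp)$ there.

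Second, even granting the correct form of the restriction statement (via the genuine noble representative and Example \ref{exGa1}), the decisive computation --- that the solution space of $s_2w_1=0$, $s_2w_j=s_1w_{j-1}$ in $V_{2n}(\fp)\otimes_k V_{2n}(\fp)$ has dimension $2n^3+2n$ for every $n$ and all four comultiplications --- is never carried out. The remark that the correction terms of $\Delta_1,\Delta_2,\Delta_3$ land in $\im\Delta(s_2)$ does not by itself yield a bijection (or equality of dimensions) between the solution spaces for the twisted and untwisted recursions; some explicit argument, comparable in length to the paper's matrix analysis of the projective component and of $\iota\otimes\iota$, is still required, and by your own admission it is deferred. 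Finally, the fallback of ``invoking Ba{\v s}ev as corrected by Conlon'' for $G_3$ is not available here: the paper points out those product formulas were stated without proof (and Ba{\v s}ev's original ones partly in error), and Theorem \ref{nobleSquareTheorem} is precisely the paper's original proof of them, so citing them would be circular. As it stands, your proposal correctly narrows the theorem to an off-by-one alternative but does not close it.
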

\begin{proof}
    We will make explicit computations using matrices in each case of \ref{WangClassification} for representatives $G_i \in \Hscr / \Aut(A)$, and promote this to a general formula of $G$ with our curtailment Lemma \ref{curtailmentLemma}. 
    Throughout, we fix a noble $\pi$-point $\fp \in \Xscr(A)$ for $G$ and denote $V_{2n} = V_{2n}(\fp)$.

    Given the basic matrix construction \ref{basevClassificationThm} of the indecomposable reps $V_{2n}$, the choice of basis induces short exact sequences of modules we call the \emph{canonical monos/epis}
    \[
        0 \to V_{2p} \to V_{2(p+q)} \to V_{2q} \to 0.
    \]
    
    Suppose for $n > 1$ the contrary to formula \ref{squareForm}, which by Lemma \ref{greenForKlein} lets us assume
    \[V_{2n} \otimes V_{2n} \cong V_{2(n-1)} \oplus V_{2(n+1)} \oplus (n^2 - n ) P. \]
    But also by Lemma \ref{greenForKlein} we have
    \[V_{2(n+1)} \otimes V_{2(n+1)} \cong 2V_{2(n+1)} \oplus (n^2 + n)P.\]
    Consider the canonical mono $\iota : V_{2n} \to V_{2(n+1)}.$ Then its $\otimes$-square 
    \[\iota\otimes \iota : V_{2n} \otimes V_{2n}\to V_{2(n+1)} \otimes V_{2(n+1)}, \]
    is also a monomorphism, and by the injectivity of $P$, this descends to 
    \[\iota' :  V_{2(n-1)} \oplus V_{2(n+1)} \to 2V_{2(n+1)} \oplus (2n)P.\]
    We will show directly that the restriction $\iota'$ of $\iota\otimes\iota$ indeed has image contained in the nonprojective component of $V_{2(n+1)} \otimes V_{2(n+1)}$, and argue that $\iota'$ is a sum of canonical monos composed with the split embedding. From this it will follow that the cokernel of $\iota \otimes \iota$ is isomorphic to $V_{4} \oplus (2n)P$. But this is a contradiction:
    by the restriction formula (see \ref{ourpurpose}) there is no exact sequence
    \[0 \to V_{2n} \otimes V_{2n}\to V_{2(n+1)} \otimes V_{2(n+1)} \to V_{4} \oplus (2n)P \to 0, \]
    for its restriction to the subgroup representing $\fp$ is an exact sequence of $k[t]/t^2$-modules (see \ref{exGa1})
\[\begin{tikzcd}[cramped, sep=small]
	0 & {4J_1 \oplus 4(n-1)J_2} & {4J_1 \oplus 4(n+1) J_2} & {2J_1\oplus(4n+1)J_2} & 0
	\arrow[from=1-1, to=1-2]
	\arrow[from=1-2, to=1-3]
	\arrow[from=1-3, to=1-4]
	\arrow[from=1-4, to=1-5]
\end{tikzcd}\]
This is a contradiction when $n > 1$ since $J_2$ is projective and injective, and $4(n + 1) < (4n + 1)+4(n-1).$

For direct computation we consider two cases, one in which $\fp = [1 : 0]$, hence we may assume $s_2 = x,$ and $s_1 = y$, and the other, in which $\fp = [a : 1]$ for $a = 0, 1 \in k$ we take $s_2 = ax + y$ and $s_1 = x.$

Suppose $\fp = [1 : 0]$, $s_2 = x,$ $s_1 = y.$ Now we have
\begin{flalign*}
        \widetilde\Delta :  s_1 &\mapsto s_1\otimes 1 + 1\otimes s_1\\
                            s_2 &\mapsto s_2\otimes 1 + 1\otimes s_2,\\
        \Delta_1        :   s_1 &\mapsto s_1 \otimes 1 + 1 \otimes s_1 + s_2\otimes s_2\\
                            s_2 &\mapsto s_2\otimes 1 + 1\otimes s_2,\\
        \Delta_2        :   s_1 &\mapsto s_1 \otimes 1 + 1 \otimes s_1 + s_1 \otimes s_1\\
                            s_2 &\mapsto s_2\otimes 1 + 1\otimes s_2,\\
        \Delta_3        :   s_1 &\mapsto s_1 \otimes 1 + 1 \otimes s_1 + s_1 \otimes s_1\\
                            s_2 &\mapsto s_2\otimes 1 + 1\otimes s_2 + s_2 \otimes s_2.
\end{flalign*}

Now we describe the action of the $\Delta(s_i) \in A \otimes A$ over the $k$-linear decomposition
\[M\otimes M = (M_\ell \otimes M_\ell) \oplus (M_\ell \otimes M_u) \oplus (M_u \otimes M_\ell) \oplus (M_u\otimes M_u)\]
into blocks of size $n^2,$ where $V_{2n} = M = M_\ell \oplus M_u $ as in Theorem \ref{basevClassificationThm}.
There are four actions to check:
\begin{enumerate}\setcounter{enumi}{-1}
    \item $\Delta = \widetilde \Delta$
    \[\hspace{-0.4in}\Delta(s_1) : \begin{pmatrix}
        0 &1 \otimes I_n&I_n \otimes 1&0\\
        0 &0&0&I_n \otimes 1\\
        0 &0&0&1 \otimes I_n\\
        0 &0&0&0
    \end{pmatrix},\quad \Delta(s_2) : \begin{pmatrix}
        0 &1\otimes \mathfrak{N}_n& \mathfrak{N}_n \otimes 1&0\\
        0 &0&0&\mathfrak{N}_n \otimes 1\\
        0 &0&0&1\otimes \mathfrak{N}_n\\
        0 &0&0&0
    \end{pmatrix},\]
    \item $\Delta = \Delta_1$\[\hspace{-0.4in}\Delta(s_1) : 
    \begingroup
    \setlength\arraycolsep{3pt}
    \begin{pmatrix}
        0 &1 \otimes I_n&I_n \otimes 1&\mathfrak{N}_n\otimes \mathfrak{N}_n\\
        0 &0&0&I_n \otimes 1\\
        0 &0&0&1 \otimes I_n\\
        0 &0&0&0
    \end{pmatrix},
    \endgroup\quad
    \Delta(s_2) : \begin{pmatrix}
        0 &1\otimes \mathfrak{N}_n& \mathfrak{N}_n \otimes 1&0\\
        0 &0&0&\mathfrak{N}_n \otimes 1\\
        0 &0&0&1\otimes \mathfrak{N}_n\\
        0 &0&0&0
    \end{pmatrix},\]
    \item $\Delta = \Delta_2$\[\hspace{-0.4in}\Delta(s_1) : \begin{pmatrix}
        0 &1 \otimes I_n&I_n \otimes 1&I_n\otimes I_n\\
        0 &0&0&I_n \otimes 1\\
        0 &0&0&1 \otimes I_n\\
        0 &0&0&0
    \end{pmatrix},\ \Delta(s_2) : \begin{pmatrix}
        0 &1\otimes \mathfrak{N}_n& \mathfrak{N}_n \otimes 1&0\\
        0 &0&0&\mathfrak{N}_n \otimes 1\\
        0 &0&0&1\otimes \mathfrak{N}_n\\
        0 &0&0&0
    \end{pmatrix},\]
    \item $\Delta = \Delta_3$
    \[\hspace{-0.4in}\Delta(s_1) : 
    \begin{pmatrix}
        0 &1 \otimes I_n&I_n \otimes 1&I_n \otimes I_n\\
        0 &0&0&I_n \otimes 1\\
        0 &0&0&1 \otimes I_n\\
        0 &0&0&0
    \end{pmatrix},\ \Delta(s_2) : 
    \begingroup
    \setlength\arraycolsep{3pt}
    \begin{pmatrix}
        0 &1\otimes \mathfrak{N}_n& \mathfrak{N}_n \otimes 1&\mathfrak{N}_n\otimes \mathfrak{N}_n\\
        0 &0&0&\mathfrak{N}_n \otimes 1\\
        0 &0&0&1\otimes \mathfrak{N}_n\\
        0 &0&0&0
    \end{pmatrix}
    \endgroup,\]
\end{enumerate}
with each block entry of the matrices above representing an $n^2\times n^2$ matrix.

The projective component of $M\otimes M$ is actually a subspace which is independent of choice of $\Delta,$ and free of rank $n^2 - n.$ To see this, as in the discussion following Corollary \ref{caseNM}, we see for each choice $\Delta = \widetilde\Delta, \Delta_1, \Delta_2, \Delta_3,$
that $\Delta(s_1s_2)$ has the same matrix representation as that of $s_1 \otimes s_2 + s_2 \otimes s_1,$ given by the block matrix
\[\begin{pmatrix}
    0 & 0 & 0& I_n\otimes \fN_n + \fN_n \otimes I_n\\
    0 & 0 & 0 & 0\\
    0 & 0 & 0 & 0\\
    0 & 0 & 0 & 0
\end{pmatrix}.\]
We can check that this matrix has rank $n^2 - n$ as follows: writing the linear map  \[I_n\otimes \fN_n + \fN_n \otimes I_n \in \Hom_k(M_u\otimes M_u, M_\ell \otimes M_\ell )\]
with respect to the tensor basis, we get the $n^2\times n^2$ block matrix, with each entry an $n\times n$ matrix, as below
\[\begin{pmatrix}
    \fN_n & I_n & 0 &\dots &0\\
    0     &\fN_n&I_n&\dots &0\\
    \vdots&\vdots&\vdots&\ddots&\vdots\\
    0&0&0&\fN_n&I_n\\
    0&0&0&0&\fN_n
\end{pmatrix}.\]
Then of the columns numbered $0, \dots, n^2 - 1,$ the $i$th column is a pivot column if and only if $i$ is not divisible by $n$ (i.e. it is not the leftmost column of its respective block).  
The basis elements $\fB$ of $M_u\otimes M_u$ corresponding to the pivot columns identified in turn generate a free submodule $F^{\fB} \subset M\otimes M$ of rank $n^2 - n$, with total dimension $4(n^2 - n)$ over $k$.

Now, the canonical mono $\iota : V_{2n} \to V_{2(n+1)}$ is induced from an inclusion of basis elements, between upper blocks $V_{2n(u)} \to V_{2(n+1)(u)}$ and lower blocks $V_{2n(\ell)} \to V_{2(n+1)(\ell).}$ Our assertion now follows, that $\iota\otimes \iota : V_{2n} \otimes V_{2n} \to V_{2(n+1)}\otimes V_{2(n+1)}$ is the direct sum of monomorphisms between projective components and between nonprojective components, for each $\otimes = \widetilde\otimes, \otimes_1, \otimes_2, \otimes_3$.

Further, the basic module structure of the indecomposables $V_{2n}$ in Theorem \ref{basevClassificationThm} tells us any embedding $V_{2n} \to V$, for $V$ a finite module with $\Xscr(A, V) = \{\fp\}$, and such that $V$ is annihilated by $s_1s_2$, is a canonical mono into one indecomposable factor of $V$. Thus the assertion follows that $\iota' : V_{2(n-1)} \oplus V_{2(n+1)} \to 2V_{2(n+1)} \to 2V_{2(n+1)} \oplus (2n)P$ is a direct sum of canonical monos. Hence, a contradiction, as explained above. 

We will be able to make the same contradiction by observing the same projective components in the case of $\fp = [a : 1]$ for $a = 0,1 \in k$, but we omit the matrices that let us see this directly. 
\end{proof}

\begin{thm}\label{restatedIntroKlein}(c.f. Theorem \ref{introKlein})
    The restricted Lie algebra $\fg$ satisfies Property PC. 
\end{thm}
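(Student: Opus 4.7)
The plan is to assemble Property PC from the ingredients already established, by routing through the curtailment Lemma \ref{curtailmentLemma}. Over the orbit space $\Hscr/\Aut(A) = \{\widetilde\Delta, \Delta_1, \Delta_2, \Delta_3\}$ from X.~Wang's classification \ref{WangClassification}, Property PB is exactly the content of Lemma \ref{PB'}, and Property PA follows by combining Theorem \ref{nobleSquareTheorem} (the square case $V_{2n}(\fp)\otimes V_{2n}(\fp)\cong 2V_{2n}(\fp)\oplus(n^2-n)P$ at noble $\fp$) with Corollary \ref{caseNM} (the unequal case) and Proposition \ref{kleinProjComp} (projective component), yielding Ba{\v s}ev's formula \ref{basevRule} for every pair $(n,m)$. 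Since by the classification Theorem \ref{basevClassificationThm} the $V_{2n}(\fp)$ together with $P$ exhaust, up to isomorphism, all indecomposables supported in $\{\fp\}$, this pins down $(\cC(\fp),\otimes_i)\equiv(\cC(\fp),\widetilde\otimes)$ at every noble $\fp$ for $G_i$, for each of the four representatives.

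Next I will apply Lemma \ref{curtailmentLemma} to promote the orbit-representative statements to the full statements over all cocommutative Hopf algebra structures in $\Hscr$. Writing $\Delta = \Delta_i^{\varphi}$ for some $\varphi \in \Aut(A)$, Lemma \ref{twisting} shows $\fp$ is noble/ignoble for $G = G_i^\varphi$ iff $(\varphi^*)^{-1}(\fp)$ is noble/ignoble for $G_i$, and twisting intertwines tensor products with the base-change of modules via $\varphi$. The isomorphism $V_{2n}(\fp)\otimes_i V_{2m}(\fp)\cong V_{2n}(\fp)\,\widetilde{\otimes}\, V_{2m}(\fp)$ at the orbit representative $\Delta_i$ then transports to the corresponding isomorphism at $\Delta = \Delta_i^\varphi$ under the hypothesis of Lemma \ref{curtailmentLemma}.

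The main obstacle is verifying that isotropy hypothesis: for every closed $\fp\in\Xscr(A)$, every finite $M$ with $\Xscr(A,M) = \{\fp\}$, and every $\varphi\in\Omega(A,\fp)$, we need $M^\varphi\cong M$. Here I would exploit that $p=2$ puts $A = k[x,y]/(x^2,y^2)$ into tame type, so by the Krull--Schmidt theorem it suffices to check the hypothesis on the indecomposable summands of $M$. By Theorem \ref{basevClassificationThm}, any such indecomposable is either the projective $P$ (which is clearly invariant under any augmented algebra automorphism) or some $V_{2n}(\fp)$. Twisting by $\varphi$ preserves $k$-dimension; since $\varphi$ is augmented we have $\Xscr(A, M^\varphi) = \varphi^*(\Xscr(A,M))$, which equals $\{\fp\}$ because $\varphi\in\Omega(A,\fp)$; and $V_{2n}(\fp)^\varphi$ remains indecomposable. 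The classification Theorem \ref{basevClassificationThm}(IV) then forces $V_{2n}(\fp)^\varphi\cong V_{2n}(\fp)$, as $V_{2n}(\fp)$ is the unique indecomposable of dimension $2n$ supported at $\{\fp\}$.

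Once the isotropy hypothesis is in hand, Lemma \ref{curtailmentLemma} combined with Lemma \ref{PB'}, Theorem \ref{nobleSquareTheorem}, Corollary \ref{caseNM}, and Proposition \ref{tautnoble} together give both halves of the bidirectional Property PC for $\fg$, completing the proof. I expect the only delicate point to be making the appeal to Krull--Schmidt clean and noting that an indecomposable module supported at a single closed point is necessarily either projective or of the classified form $V_{2n}(\fp)$; this is immediate from Theorem \ref{basevClassificationThm}(IV) together with the fact that modules of the form $\Omega^n k$ are supported on all of $\Xscr(A) = \PP^1$ and so cannot occur as summands of a module with singleton support.
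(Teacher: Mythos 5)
Your proposal is correct and follows essentially the same route as the paper: verify the isotropy hypothesis of the curtailment Lemma \ref{curtailmentLemma} via Ba{\v s}ev's classification \ref{basevClassificationThm} (twisting by $\varphi\in\Omega(A,\fp)$ preserves dimension, indecomposability, and singleton support, so \ref{basevClassificationThm}(IV) forces $V_{2n}(\fp)^\varphi\cong V_{2n}(\fp)$), get Property PB from Lemma \ref{PB'}, and get Property PA by assembling Ba{\v s}ev's formula \ref{basevRule} from the non-square case (Proposition \ref{kleinProjComp}, Lemma \ref{basevLemma}/Corollary \ref{caseNM}) together with Theorem \ref{nobleSquareTheorem} for squares at noble points, then promote over all of $\Hscr$ by the curtailment. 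Your explicit Krull--Schmidt verification of the isotropy hypothesis is exactly the argument the paper leaves implicit in its citation of Theorem \ref{basevClassificationThm} and Lemma \ref{twisting}.
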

\begin{proof}
    From Theorem \ref{basevClassificationThm} and Lemma \ref{twisting}, our curtailment Lemma \ref{curtailmentLemma} applies. Then with Lemma \ref{PB'}, we have that $\fg$ satisfies Property PB. 
    
    We know from Lemma \ref{kleintheorembasecase}, and the calculation of projective components in Proposition \ref{kleinProjComp}, that we may apply \ref{basevLemma} to calculate that each non-square product of indecomposables in $\cC(\fp)$ agrees with Ba{\v s}ev's formula \ref{basevRule}  regardless of $G_i$ chosen from \ref{WangClassification}, and regardless of nobility of $\fp$ for $G_i$. Finally Theorem \ref{nobleSquareTheorem} tells us that the remaining products of indecomposables, the squares of those supported at a noble point, all follow the same formula \ref{basevRule}. Then by the curtailment Lemma \ref{curtailmentLemma}, we have that $\fg$ satisfies property PA.  
\end{proof}

\subsection{Conjectures for Lie algebras}
Consider the Lie algebra $\sl_2$, with presentation
\[\langle e, f, h \mid [e, f] = h, [h, e] = 2e, [h, f] = 2f\rangle.\]
In characteristic $p=2$, any restriction $\fg$ of $\sl_2$ which includes relations \[e^{[2]} = f^{[2]} = 0\] has in turn that $u(\fg)$ is a quotient of the tame noncommutative algebra $B = k\langle x, y\rangle / (x^2, y^2).$ The modules of $B$ are classified by Bondarenko \cite{Bon75}, originally applied to determine how dihedral groups are tame. In turn the work applies directly to the case of the restriction $\fg_1$ of $\sl_2$ which has $h^{[2]} = 0$ (this restricted Lie algebra coincides with the canonical restriction for the Heisenberg algebra). This is because the dihedral group of order $8$ has group algebra isomorphic as an associative algebra to $u(\fg_1)$. Thus, $A = u(\fg_1)$ has been shown so far to have two different cocommutative Hopf algebra structures, but it is an open problem to classify all of the cocommutative Hopf algebra structures on $A$. There are at least ten, including those of the Lie algebra and dihedral group, each specializing under $h\mapsto 0$ to a Hopf algebra on the commutative algebra $k[e,f]/(e^2,f^2)$ covered in Sections \ref{sectionp2lemma} and \ref{sectionKlein}. Similar local algebras of order $p^3$ have Hopf algebra structures classified as a corollary of the work of Nguyen, L. Wang, and X. Wang \cite{NWW15} for $p > 2.$

The other restriction $\fg_2$ on $\sl_2$ with $u(\fg_2)$ a quotient of $B,$ is the canonical restriction derived from the trace-free $2\times2$ matrix representation of $\sl_2$, which has that $h^{[2]} = h.$ We present three conjectures, each encompassed by the next.
\begin{conjecture}\label{conj1}
    The restrictions $\fg_1$ and $\fg_2$ of the Lie algebra $\sl_2$ satisfy Property PC.
\end{conjecture}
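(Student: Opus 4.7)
The plan is to emulate the three-part strategy used for Theorem \ref{restatedIntroKlein}: first classify cocommutative Hopf structures on $A_i = u(\fg_i)$ up to $\Aut(A_i)$-orbit; then for each orbit representative $\Delta_j$ with associated group scheme $G_j$ and product $\otimes_j$, verify Property PA at each noble point of $G_j$ and Property PB at each ignoble point; and finally check the isotropy hypothesis of Lemma \ref{curtailmentLemma} so that the per-representative statements extend across all of $\Hscr$. The indecomposable $A_i$-modules can be read off from Bondarenko's classification \cite{Bon75} for $B = k\langle x,y\rangle/(x^2,y^2)$ by restricting to those modules annihilated by the ideal that cuts out $u(\fg_i)$ as a quotient; since $B$ is tame, these modules fall into strings and bands, and each indecomposable has a well-defined closed support variety in $\Xscr(A_i) = \Proj H^*(A_i, k)$.

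For Property PB, I would follow the template of Lemma \ref{PB'}. At each closed point $\fp$ of $\Xscr(A_i)$, represented by a canonical $\pi$-point $\alpha(\fp) : k[t]/t^p \to A_i$, define $V(\fp) = A_i \otimes_{k[t]/t^p} k$, the induced module supported only at $\{\fp\}$. At an ignoble $\fp$ for $G_j$, I would compare $\Delta_j$ with $\widetilde\Delta$ on the linear form $s_2$ cutting out $\fp$, modulo the ideal $(s_2 \otimes 1,\, 1 \otimes s_2) \triangleleft A_i \otimes A_i$; a direct computation in Jordan-block coordinates should show that $\Delta_j(s_2)$ acts nontrivially on $V(\fp) \otimes V(\fp)$ while $\widetilde\Delta(s_2)$ does not, establishing $V(\fp) \otimes_j V(\fp) \not\cong V(\fp)\, \widetilde\otimes\, V(\fp)$. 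For Property PA at a noble $\fp$ for $G_j$, the canonical $\pi$-point $\alpha(\fp)$ is a Hopf subalgebra inclusion, so the restriction property gives $\alpha(\fp)^*(M \otimes_j N) \cong \alpha(\fp)^*(M)\, \widetilde\otimes\, \alpha(\fp)^*(N)$ for any $M, N \in \cC(\fp)$. Combined with Example \ref{introGa1} this pins down the restriction of each product; a Ba\v{s}ev-style inductive argument as in Theorem \ref{nobleSquareTheorem}, using injectivity of the projective indecomposable $P$ and the computation of the projective summand via the matrix of $\Delta_j(s_1 s_2)$, should complete the determination of the Green ring on $\cC(\fp)$.

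The principal obstacle is that a full classification of cocommutative Hopf algebra structures on $u(\fg_1)$ is open, as the excerpt notes; one would need to extend the work of Nguyen, Ng, L.~Wang and X.~Wang on connected Hopf algebras of dimension $p^3$ to all cocommutative Hopf structures on $A_1$, possibly by organizing the structures according to their specialization $h \mapsto 0$ to one of the four Hopf structures on $k[x,y]/(x^2,y^2)$ from \ref{WangClassification}. A secondary obstacle is verifying the isotropy hypothesis of Lemma \ref{curtailmentLemma} in the tame non-commutative setting: here $\cC(\fp)$ typically contains both string and band modules, and one must check that an augmented automorphism $\varphi \in \Aut(A_i)^\fp$ acts trivially on isomorphism classes, including on any continuous band parameter attached to $\fp$. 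The Bondarenko-based computations for Property PA in the non-commutative setting of $B$ will also be substantially more intricate than the Ba\v{s}ev-style matrix analysis of Section \ref{sectionKlein}, and should be carried out first for each fixed representative $\Delta_j$ before being lifted to all of $\Hscr$ via the curtailment lemma.
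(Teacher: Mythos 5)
The statement you are addressing is stated in the paper as Conjecture \ref{conj1}; the paper offers no proof of it, precisely because the main ingredient your plan requires is unavailable. Your outline faithfully reproduces the strategy the paper itself uses in the tame commutative case (Section \ref{sectionKlein}): compute $\Hscr/\Aut(A)$, verify PB at ignoble points via induced modules as in Lemma \ref{PB'} and PA at noble points via the restriction property and a Ba\v{s}ev-style analysis as in Theorem \ref{nobleSquareTheorem}, then pass from orbit representatives to all of $\Hscr$ with the curtailment Lemma \ref{curtailmentLemma}. So as a research program it is aligned with what the paper envisions, but it does not constitute a proof, and the gaps you flag are exactly the ones that keep this a conjecture.

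Concretely: (i) the classification of cocommutative Hopf algebra structures on $u(\fg_1)$ is open --- the paper records at least ten such structures and notes that the results of Nguyen, L.~Wang and X.~Wang cover connected Hopf algebras of dimension $p^3$ only for $p>2$, so in characteristic $2$ there is no list of representatives of $\Hscr/\Aut(A_1)$ to quantify over; for $\fg_2$ the algebra $u(\fg_2)$ is not even local (since $h^{[2]}=h$ gives $k[h]/(h^2-h)\cong k\times k$), so the non-connected, pointed case enters and the known local/connected classifications do not apply at all. (ii) The isotropy hypothesis of Lemma \ref{curtailmentLemma} is not verified: for these tame noncommutative algebras the categories $\cC(\fp)$ contain band modules with a continuous parameter, and the paper explicitly flags as a (separate) conjecture that this parameter is realizable as support; without that, an automorphism fixing $\fp$ could a priori move isomorphism classes in $\cC(\fp)$, and the curtailment step collapses. (iii) The PA computations at noble points are asserted to "should" follow a Ba\v{s}ev-style induction, but no analogue of the projective-component calculation (Proposition \ref{kleinProjComp}) or of the semiring rigidity Lemmas \ref{basevLemma}--\ref{greenForKlein} is established for quotients of $k\langle x,y\rangle/(x^2,y^2)$, where string and band combinatorics replace the single family $V_{2n}(\fp)$. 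Until these three points are addressed the argument is a plausible plan, not a proof, which matches the paper's own assessment in leaving Conjecture \ref{conj1} open.
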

\begin{conjecture}\label{conj2}
    Let $\fg$ be a restricted Lie algebra of tame representation type over $k$. Then $\fg$ satisfies Property PC.
\end{conjecture}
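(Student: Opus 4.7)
The plan is to adapt the proof of Theorem \ref{restatedIntroKlein} to the broader class of tame restricted Lie algebras, proceeding in three stages: (i) classify cocommutative Hopf algebra structures on $A = u(\fg)$ up to the action of $\Aut(A),$ reducing to a finite set $\Hscr/\Aut(A)$ of orbit representatives, invoking the classifications of Nguyen, Ng, L. Wang, and X. Wang where possible and extending them ad hoc for the tame $\fg$ of interest (such as the two restrictions of $\sl_2$ in Conjecture \ref{conj1}); (ii) establish the isotropy hypothesis of Lemma \ref{curtailmentLemma}, so that checking Properties PA and PB on these representatives suffices; and (iii) verify PA and PB on each representative by direct computation, exploiting that cohomological support is Hopf-invariant and hence each subcategory $\cC(\fp)$ is the same across all tensor structures under consideration.

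For stage (ii), the isotropy hypothesis reduces to showing that any finite module $M$ with $\Xscr(A, M) = \{\fp\}$ is determined up to isomorphism by its dimension. Tameness of $\fg$ should ensure that the indecomposables of $\cC(\fp)$ at each $\fp$ form a countable discrete family indexed by dimension, and a twist $M^\varphi$ for $\varphi \in \Omega(A, \fp)$ has the same dimension and, by Lemma \ref{twisting}, the same support as $M$; uniqueness of the tame classification at each dimension then gives $M^\varphi \cong M.$ For stage (iii), Property PB is a direct generalization of Lemma \ref{PB'}: at each ignoble $\fp$ for a group scheme $G$ with $kG \cong A$, the induced module $V(\fp) = A \otimes_{k[t]/t^p} k$ should serve as a witness, since $\Delta(s_2) - \widetilde\Delta(s_2)$ reduced modulo $\Sigma = (s_2 \otimes 1, 1 \otimes s_2) \triangleleft A \otimes A$ will be congruent to an element whose action on $V(\fp) \otimes_k V(\fp)$ is nonzero precisely when the $\pi$-point $t \mapsto s_2$ fails to be a Hopf subalgebra inclusion for $\Delta.$ Property PA requires computing the full Green ring of $\cC(\fp)$ at each noble $\fp$ and comparing it to $\widetilde\otimes$; following Ba{\v s}ev, one splits this into (a) the projective summand rank, controlled by $\Delta(\soc A)$ modulo $\Sigma$ which is frequently forced to be independent of $\Delta,$ and (b) the squares $V_n(\fp) \otimes V_n(\fp),$ to be handled inductively via canonical monomorphisms $V_n(\fp) \hookrightarrow V_{n+1}(\fp)$ together with a restriction-to-$\pi$-point argument as in Theorem \ref{nobleSquareTheorem}.

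The principal obstacle is step (iii) for Property PA: uniformly controlling the squares $V_n(\fp) \otimes V_n(\fp)$ across all Hopf algebra structures, especially in tame cases of dimension at least three where the Hopf algebra classification is not yet known and the clean identity $\Delta(\soc A) \equiv \widetilde\Delta(\soc A) \pmod{\Sigma}$ may fail. A secondary obstacle is establishing the isotropy hypothesis uniformly across tame type, without case analysis, since this requires a tame-classification statement at a single $\pi$-point that is not immediately available in the literature beyond the string/band dichotomy for special biserial algebras. I expect progress will ultimately require either extending the Hopf algebra classifications of Nguyen, Ng, L.\ Wang, and X.\ Wang to the relevant higher dimensions, or finding a more structural, tt-geometric argument that bypasses the direct matrix computation that sufficed in the Klein-four case of Theorem \ref{nobleSquareTheorem}.
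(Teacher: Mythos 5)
The statement you are addressing is not proved in the paper at all: it appears there as Conjecture \ref{conj2}, and the paper only establishes special cases (the finite-type examples of \ref{exGa1} and the example following it, and the tame two-dimensional abelian algebra in characteristic $2$, Theorem \ref{restatedIntroKlein}). So there is no paper proof to compare against, and your text is, as you yourself acknowledge, a research program rather than a proof. As a program it does track the paper's own suggested route: the paper remarks after the conjectures that the first issue is whether tame (or finite) representation type is equivalent to the isotropy hypothesis of Lemma \ref{curtailmentLemma}, and that tame algebras of dimension $\le 3$ in odd characteristic could be approached via the classification in \cite{NWW15}. Your stages (i)--(iii) are essentially the Klein-four strategy of Sections \ref{sectionp2lemma}--\ref{sectionKlein} written in general terms.

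The concrete gaps are these. First, in stage (ii) you assert that tameness "should ensure" that indecomposables with support $\{\fp\}$ are determined up to isomorphism by their dimension; this is unproven and is essentially the open point. For a general tame (special biserial) algebra, distinct one-parameter families of band modules can a priori contribute modules with the same singleton support, and the paper itself flags that even the identification of the continuous band parameter with support is conjectural (see the sentence preceding Conjecture \ref{conj4} and the end of the overview), so the isotropy hypothesis cannot simply be read off from tameness. Second, in stage (iii) your criterion for PB is wrong as stated: nobility is a property of the \emph{equivalence class} of a $\pi$-point, so the fact that the particular representative $t \mapsto s_2$ fails to be a Hopf-subalgebra inclusion for $\Delta$ does not make $\fp$ ignoble, and conversely the nonvanishing of $\Delta(s_2)-\widetilde\Delta(s_2)$ modulo $\Sigma$ on $V(\fp)\otimes_k V(\fp)$ was verified in the paper only case-by-case against the explicit list \ref{WangClassification} (Lemma \ref{PB'}); in general that term could act by zero on the induced module even at an ignoble point, so the claimed "precisely when" needs an argument that does not currently exist. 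Third, stages (i) and (iii)(PA) presuppose Hopf-algebra classifications and square computations analogous to Theorem \ref{nobleSquareTheorem} that are unavailable beyond dimension $p^2$ and scattered $p^3$ cases --- exactly the obstruction the paper cites when it leaves \ref{conj1} and \ref{conj2} open. In short, your plan is a reasonable restatement of the intended attack, but each of its three stages contains an unresolved step, so the conjecture remains unproved.
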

\begin{conjecture}\label{conj3}
    Let $\fg$ be any restricted Lie algebra over $k$. Then $\fg$ satisfies Property PC if and only if $\fg$ is of tame or finite representation type. 
\end{conjecture}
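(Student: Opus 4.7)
The plan is to attack the two implications of Conjecture \ref{conj3} by fundamentally different methods. The forward direction---finite or tame representation type implies Property PC---requires case analysis driven by classifications of Hopf-algebra structures on $u(\fg)$. The reverse direction---wild representation type implies failure of Property PC---admits a more uniform attack via isotropy automorphisms and induced modules, extending the mechanism used in Theorem \ref{nPA}.

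For the forward implication, I would proceed algebra by algebra using a classification of restricted Lie algebras of finite and tame representation type. For each such $\fg$, the template is the one established in Section \ref{sectionKlein}: (i) classify $\Hscr / \Aut(u(\fg))$ by invoking the Hopf-algebra classifications of Nguyen, Ng, and the Wangs; (ii) verify the isotropy hypothesis of Lemma \ref{curtailmentLemma}, so that the comparison reduces to a complete set of orbit representatives; (iii) for each representative Hopf structure $\Delta_i$ and each noble point $\fp$ for the associated scheme $G_i$, compute the Green ring $(\cC(\fp),\otimes_i)$ and identify it with $(\cC(\fp),\widetilde\otimes)$, reducing Property PA to explicit rank computations of the type done in Proposition \ref{kleinProjComp} and Theorem \ref{nobleSquareTheorem}; (iv) for ignoble points, produce explicit distinguishers $V\otimes_i W \not\cong V\widetilde\otimes W$ in the spirit of Lemma \ref{PB'}.

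For the reverse implication, the strategy is to generalize the automorphism-twisting construction of Theorem \ref{nPA}. If $\fg$ is wild, a standard rank count locates a wild elementary abelian Lie subquotient: two-dimensional for odd $p$, at least three-dimensional for $p=2$. When such a subquotient is actually a direct summand, the witness $\varphi : y \mapsto y + x^2$ of Theorem \ref{nPA} extends to $u(\fg)$ by the identity on the complementary summand, lies in the isotropy $\Omega(u(\fg),\fp)$ for a suitable noble $\fp$, and the induced module $V(\fp) = u(\fg)\otimes_{k[t]/t^p}k$ witnesses $V\widetilde\otimes V \not\cong V\widetilde\otimes^{\varphi} V$ after restriction along a representing $\pi$-point. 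The nondirect-summand cases---exemplified by the Heisenberg algebra already handled in Section \ref{sectionWild}---are treated by lifting the naive two-dimensional $\varphi$ through the PBW filtration with correction terms chosen both to preserve the algebra structure of $u(\fg)$ and to still fix the target prime; the existence of such a lift should be a linear-algebra exercise once a wild abelian subquotient is fixed.

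The step I expect to be the main obstacle is the classification in part (i) of the forward direction, since cocommutative Hopf structures on $u(\fg)$ have been classified only in small dimension. A promising workaround would be a structural lemma: for any such Hopf structure $\Delta$ on $u(\fg)$ and any $\fp$ noble for the associated group scheme, there should exist $\psi \in \Aut(u(\fg))$ twisting $\Delta$ to a form that agrees with $\widetilde\Delta$ on the one-parameter Hopf subalgebra representing $\fp$. Granting such a lemma, the Oort--Tate analysis of Example \ref{introGa1} would propagate via the local-to-global sketch of \ref{ourpurpose} to give $(\cC(\fp),\otimes) \equiv (\cC(\fp),\widetilde\otimes)$ at every noble point, bypassing per-algebra classification entirely. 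Proving such a structural lemma, or finding another classification-free route, is where the real work will lie; Conjecture \ref{conj4} suggests that the ignoble half of PC will not require ideas beyond those already present in Section \ref{sectionp2lemma}.
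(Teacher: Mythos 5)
This statement is Conjecture \ref{conj3}; the paper does not prove it, but only assembles evidence (Property PC for the algebras in \ref{exGa1}, the example following it, and Theorem \ref{introKlein}; failure of Property PA for wild abelian algebras and Heisenberg algebras in Section \ref{sectionWild}), so there is no proof to compare against, and your text is a research program rather than an argument. Its key steps are exactly the open content of the conjecture. In the forward direction, the per-algebra template requires (a) a classification of cocommutative Hopf structures on $u(\fg)$, which is unavailable beyond small dimension --- the paper notes that even for the restrictions of $\sl_2$ at $p=2$ the classification is open, which is why Conjecture \ref{conj1} is itself only a conjecture --- and (b) the isotropy hypothesis of Lemma \ref{curtailmentLemma}, which is not known to hold for all tame or finite-type algebras; the paper flags establishing this as the first unresolved step. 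More seriously, your proposed workaround (a structural lemma twisting $\Delta$ to agree with $\widetilde\Delta$ on the one-parameter Hopf subalgebra representing $\fp$, then propagating Example \ref{introGa1} by the local-to-global sketch of \ref{ourpurpose}) cannot work as stated: that argument nowhere uses tameness, so it would yield Property PA for \emph{every} restricted Lie algebra, contradicting Theorem \ref{nPA}. Indeed Theorem \ref{nPA} is precisely a case where $\fp$ is noble for both $\widetilde G$ and $\widetilde G^\varphi$, the restrictions along the representing $\pi$-point agree, and yet $V\widetilde\otimes V \not\cong V\widetilde\otimes^\varphi V$; the paper presents the local-to-global passage as an open question, not as a tool one may grant.

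In the reverse direction, the reduction of ``wild'' to a wild elementary abelian \emph{subquotient}, followed by the claim that lifting the two-dimensional automorphism through the PBW filtration ``should be a linear-algebra exercise,'' passes over exactly what is hard. The paper's extension machinery works only under strong hypotheses: Corollary \ref{directSumExtension} needs a wild abelian \emph{direct summand}, and Lemma \ref{generalAutomorphismExtension} and Corollary \ref{automorphismExtension} need the subalgebra to be central and an extension $\varphi$ of $\psi$ to be supplied --- producing that extension is the nontrivial input. For the Heisenberg algebras, which contain a wild abelian subalgebra but not as a summand, the paper does not lift $y\mapsto y+x^2$ at all; it constructs the different isotropy $x\mapsto x+(yz)^{p-1}$ and verifies failure of the CGM identity by a lengthy explicit Jordan-form computation (Proposition \ref{HeisenbergnPA}). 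Moreover, subquotients are not compatible with the induced-module technique: support and nobility are controlled by subalgebras via \ref{PBWsupport} and Theorem \ref{liesubhopf}, and a wild quotient of $\fg$ does not by itself furnish a module over $u(\fg)$ with singleton support, nor an isotropy in $\Omega(u(\fg),\fp)$. So as written, your reverse direction covers only the cases the paper already settles, and both halves of the conjecture (as well as the PB half, Conjecture \ref{conj4}) remain genuinely open.
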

A fourth conjecture is also likely to hold, but we also suspect a proof would require classification of Hopf algebras far beyond what is known. 
\begin{conjecture}\label{conj4}
    All restricted Lie algebras satisfy Property PB regardless of representation type. 
\end{conjecture}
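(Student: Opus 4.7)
The plan is to generalize the method of Lemma \ref{PB'} to an arbitrary restricted Lie algebra $\fg$, using induced modules as test objects. Fix a finite group scheme $G$ with group algebra $u(\fg) = A$ and cocommutative Hopf algebra structure $\Delta$, together with an ignoble point $\fp \in \Xscr(A)$ for $G$. By Proposition \ref{tautnoble}, after a field extension $K/k$ if necessary, choose a primitive element $s \in \fg_K$ with $s^{[p]} = 0$ such that the inclusion $K[s]/s^p \hookrightarrow A_K$ represents $\fp$. Form the induced module
\[V = V(\fp) := A_K \otimes_{K[s]/s^p} K,\]
on which $s$ acts as zero and whose support is exactly $\{\fp\}$. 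Because $s$ is primitive for $\widetilde\Delta$, the element $\widetilde\Delta(s) = s\otimes 1 + 1\otimes s$ lies in the ideal $\Sigma := (s\otimes 1,\, 1\otimes s) \triangleleft A_K \otimes A_K$, so $s$ annihilates $V \widetilde\otimes V$.

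The central step is to show that for at least one such $V$ (or a higher-dimensional cousin $V_r := A_K \otimes_{K[s]/s^p} L_r$, where $L_r$ is an indecomposable $K[s]/s^p$-module), the action of $s$ on $V_r \otimes V_r$ is nonzero; equivalently, the class of $\Delta(s^j) - \widetilde\Delta(s^j)$ in $(A_K \otimes A_K)/\Sigma$ acts nontrivially for some $1 \le j \le p-1$. This would immediately yield $V_r \otimes V_r \not\cong V_r \widetilde\otimes V_r$ as $A$-modules. My strategy is a contrapositive: assume that for every primitive representative $s \in \fg_K$ of $\fp$ and every induced test module these actions all vanish. Unpacking the vanishing through cocommutativity of $\Delta$, the counit axiom, the antipode axiom, and the augmentation filtration on $A$, attempt to show that the one-dimensional subspace $Ks$ extends to a $\Delta$-stable subcoalgebra which is in turn dual to a Hopf subalgebra of $(A, \Delta)$ whose associated $\pi$-point is equivalent to $\fp$. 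This would contradict the ignobility of $\fp$ for $G$.

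The main obstacle, and the reason Conjecture \ref{conj4} remains open, is this final extraction step: rigorously converting the algebraic vanishing hypothesis into the existence of a Hopf subalgebra representing $\fp$. In the cases where we have verified PB by hand (Lemma \ref{PB'} and the examples of Section \ref{sectionWild}), this is bypassed by direct case-by-case analysis over a complete set of orbit representatives in $\Hscr / \Aut(A)$ using the explicit classifications of X. Wang and collaborators. For a general restricted Lie algebra no such classification is available, so any proof must operate purely from the cocommutative Hopf algebra axioms. A promising avenue would be Cartier duality: translate ignobility of $\fp$ for $G$ into a scheme-theoretic statement about the absence of certain subgroup schemes of $G^*$, and then use the tensor-module-theoretic vanishing to construct such a subgroup scheme, contradicting ignobility. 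I expect a successful implementation of this program to require new general structural results about deformations of the Lie Hopf structure on $u(\fg)$ beyond those available from low-dimensional classification.
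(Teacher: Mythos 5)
The statement you are addressing is Conjecture \ref{conj4}: the paper offers no proof of it, and in fact states explicitly that a proof would likely require classifications of cocommutative Hopf algebra structures far beyond what is currently known. So there is no argument in the paper to compare against, and your proposal does not close the conjecture either --- you say as much yourself. What you have written is a plausible program (generalize Lemma \ref{PB'}: take an induced module $V(\fp) = A\otimes_{K[s]/s^p}K$ at an ignoble point, observe $s$ kills $V\widetilde\otimes V$, and try to show $\Delta(s)$ acts nontrivially on $V\otimes V$ modulo $\Sigma = (s\otimes 1, 1\otimes s)$), but the step you label the ``final extraction step'' --- deducing from the vanishing hypothesis that $Ks$ sits inside a Hopf subalgebra of $(A,\Delta)$ representing $\fp$, contradicting ignobility --- is precisely the mathematical content of the conjecture, and nothing in your sketch constrains $\Delta$ enough to carry it out. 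Note in particular that ignobility is a statement about the entire equivalence class of the $\pi$-point: nobility only requires \emph{some} representative $\alpha$, possibly involving higher-order terms and a field extension, whose image is a Hopf subalgebra. Your vanishing hypothesis, by contrast, only sees $\Delta(s)$ modulo $\Sigma$ and modulo the annihilator of specific induced modules for one chosen representative $s$, so even granting the hypothesis it is unclear how to produce the required subcoalgebra, let alone show it is equivalent to $\fp$.

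There is a second, quieter gap: your test invariant (whether $s$ annihilates $V_r\otimes V_r$) is far coarser than the Green ring structure on $\cC(\fp)$ that Property PB actually concerns. For wild algebras $\cC(\fp)$ contains many indecomposables that are not induced from $\langle s\rangle$, and it is entirely possible that at some ignoble $\fp$ the $s$-annihilation test fails to distinguish $\otimes$ from $\widetilde\otimes$ on every induced module while the Green rings still differ (so PB would hold, but your method would not detect it), or conversely that the contrapositive assumption you want to run is strictly weaker than ``identical Green rings'' and therefore cannot be expected to force nobility. In the cases the paper actually verifies (Lemma \ref{PB'}, Section \ref{sectionKlein}), the argument leans on the explicit classification of $\Hscr/\Aut(A)$ and the curtailment Lemma \ref{curtailmentLemma}, neither of which is available in general; your Cartier-duality suggestion is a reasonable direction but is, at present, a restatement of the difficulty rather than a solution. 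In short: the proposal is an honest research plan, not a proof, and the conjecture remains open.
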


The first point to address the `if' direction of Conjecture \ref{conj3} is to show whether tame and finite representation type is equivalent to the isotropy hypothesis of Lemma \ref{curtailmentLemma}. For converse, in the wild case, we can continue in Section \ref{sectionWild} to give ad-hoc arguments for how the failure of the isotropy hypothesis leads to a failure of Property PA, as per our technique in \ref{nPA}, which proved Theorem \ref{intro2dimthm}. Tame restricted Lie algebras of dimension $\le 3$ for odd characteristic may also be studied making direct use of Nguyen, L. Wang, and X. Wang's classification \cite{NWW15}, towards Conjecture \ref{conj2}.

\section{Lie algebras of wild representation type}\label{sectionWild}
In this section we will move toward one direction of Conjecture \ref{conj3}, that no Lie algebra of wild representation type satisfies Property PA. 

We begin by adapting our proof of Theorem \ref{nPA} into a more general situation. Recall the setting for disproving Property PA for a given restricted Lie algebra $\fg$: we wish to produce a Hopf algebra structure $\Delta$ on the enveloping algebra $A = u(\fg)$, differing from the Lie structure $\widetilde \Delta$, corresponding to a tensor structure $\otimes$ on $A$-modules differing from the Lie structure $\widetilde\otimes$. Then we produce $A$-modules
$V, W$, with the support condition $\Xscr(A, V) = \Xscr(A, W) = \{\fp\},$ where $\fp$ is noble for the group scheme $G$ corresponding to $\Delta$, but there is non-isomorphism
\[V\otimes W \not\cong V\widetilde\otimes W.\]

Our technique proving Theorem \ref{nPA} was to leverage the action of $\Aut(A)$ on the space of Hopf algebra structures $\Hscr$, on $A$-modules, and on the variety $\Xscr(A)$, with the natural isomorphism 
\[(V\otimes W)^\varphi \cong V^{\varphi}\otimes^\varphi W^{\varphi}.\]
Specifically, we found one representation $V$ of $\widetilde G = \GG_{a(1)}^2$ satisfying a polynomial identity $\rho^2 = n\rho,$ i.e.
\begin{equation}\label{poly1}
    V\widetilde \otimes V \cong nV,
\end{equation}
where $n = \dim V$, in the Green ring for the infinitesimal group scheme $\widetilde G$, which  corresponds to a Lie Hopf algebra structure $\widetilde\Delta$. But there exists an augmented automorphism $\varphi \in \Aut(A)$ such that the identity $\rho^2 = n\rho$ is not satisfied by $V^{\varphi^{-1}},$ provided $p > 2.$ Now, if we take $\otimes = \widetilde\otimes^\varphi,$ having $V \otimes V \cong V\widetilde\otimes V$ is equivalent to $V^{\varphi^{-1}} \widetilde\otimes V^{\varphi^{-1}}  \cong nV^{\varphi^{-1}}$, which is known to be false. All that is left is to confirm the support condition that $\Xscr(A, V) = \Xscr(A, V^{\varphi^{-1}}) = \{\fp\}$ for some point $\fp \in \Xscr(A).$ This follows from construction, that $V$ was chosen to meet this support condition and $\varphi$ was chosen to be an isotropy in $\Omega(A, \fp) < \Aut(A)$ (\ref{isotropies}).

We will see that very little needs to be changed for a given \emph{abelian} restricted Lie algebra $\fg$ of wild representation type. The polynomial identity \ref{poly1} will always be satisfied by a given induced module $V = k \uparrow^\fg_\fh,$ but not by some twist $V^{\varphi^{-1}},$ for a choice of Lie subalgebra $\fh \subset \fg$ with $k$ the trivial $\fh$-module. In fact, in most cases $\fh$ can be taken to be a subalgebra isomorphic to $\langle t \mid t^{[p]} = 0\rangle.$ For $p =2$, we will see in Proposition \ref{primordialProp2}
that some wild abelian algebras require extra care.

For nonabelian Lie algebras, the polynomial identity \ref{poly1} usually fails for similarly constructed induced modules. But Frobenius reciprocity can still be used to find other polynomial identities involving induced modules: whenever $\fh \subset \fg$ is a Lie subalgebra, denoting $\widetilde\otimes$ the tensor product for both $\fh$ and $\fg$ representations, we have a natural isomorphism
\begin{equation}\label{IndTensor}
M\uparrow_{\fh}^\fg \widetilde \otimes N \cong ( M \widetilde \otimes (N\downarrow^{\fg}_{\fh}))\uparrow^{\fg}_\fh,
\end{equation}
whenever $M$ is an $\fh$-representation and $N$ is a $\fg$-representation. Choosing $\fh$ to be a subalgebra with an easily calculated Green ring can then let us derive ad-hoc polynomial identities (\ref{MackeyPoly}, \ref{polyIds}) for induced representations of $\fg$, which fail after twisting by some isotropy $\varphi \in \Omega(A, \fp)$. We offer the Heisenberg Lie algebra of arbitrary dimension $2n +1, n \ge 1$, as an example of a nonabelian Lie algebra, of wild representation type ($p > 2$), for which this generalized technique can be applied.

\subsection{Induction from the nullcone} 
Let $\fg$ be a restricted Lie algebra over a field $k$ of characteristic $p$. Define $\Nscr_{r}(\fg)$ to be the \textit{$r$th restricted nullcone} of $\fg$, i.e. the subset of $\fg$ defined by
\[\Nscr_{r}(\fg) = \{ x \in \fg \mid x^{[p]^r} = 0\},\]
with $\Nscr_0(\fg) = {\bf 0}$ and $\Nscr(\fg) = \bigcup_{r}\Nscr_{r}(\fg).$ 

Each $\Nscr_{r}(\fg)$ is a homogeneous subvariety (i.e. a cone) of the affine space $\AA(\fg) = \Spec S(\fg^*)$. A simple argument showed in Proposition \ref{tautnoble} how the projective variety $\PP(\Nscr_1(\fg))$ covers the support variety $\Xscr(u(\fg)) = \Proj H^*(\fg, k),$ using the machinery of $\pi$-points. But the earlier work of Friedlander and Parshall \cite{FrPar86} showed how the support variety $\Xscr(u(\fg))$ maps homeomorphically onto $\PP(\Nscr_1(\fg))$, an inverse to our map using $\pi$-points.
% Carlson, Lin, Nakano and Parshall give a geometric \cite{CLNP03}.

We will say the elements in the difference of sets $x \in \Nscr_{r}(\fg)\setminus \Nscr_{r-1}(\fg)$ have \emph{order $r$}. 
When $x \in \Nscr(\fg)$ has order $r$, we denote $\langle x \rangle \subset \fg$ to be the subalgebra of dimension $r$, with basis $x, x^{[p]}, \dots x^{[p]^r}$. Notice the restricted enveloping algebra $u(\langle x\rangle)$ is of the form 
\[k[x]/x^{p^r},\]
a Hopf algebra with $x$ primitive. This Hopf algebra agrees with the group algebra for the infinitesimal group $\WW_{r(1)}$ of length $r$ Witt vectors. When $r = 1$, we have $\WW_{1(1)} \cong \GG_{a(1)}.$ 

For nonzero $x \in \Nscr(\fg)$ having any order $r\ge 1$, we have that $x^{[p]^{r-1}}$ has order 1. Thus the subalgebra $\langle x \rangle \subset \fg$ will always produce a $\pi$-point over $k$ for the infinitesimal group scheme $\widetilde G$ corresponding to $\fg$, in the form of the composition
\[u(\langle x^{[p]^{r-1}}\rangle ) \hookrightarrow u(\langle x \rangle) \hookrightarrow u(\fg).\]
For $x \in \Nscr(\fg)$, we will denote $\fp(x) \in \Xscr(u(\fg))$ the homogeneous prime arising from this $\pi$-point.

\begin{expo}\label{WittVectorExpo}(Representations of Witt vectors)
    Consider the algebra $A = k[x]/x^{p^r}$ over a field $k$ of characteristic $p$. Assuming $x$ is primitive, i.e. $\widetilde\Delta(x) = x\otimes 1 + 1 \otimes x$, defines $A$ to be a cocommutative Hopf algebra, and in fact a restricted enveloping algebra for the $r$-dimensional $P(A) = \langle x\rangle$ ($x$ being of order $r$ in $\Nscr(P(A))$). The infinitesimal group scheme corresponding to $P(A)$ is denoted $\WW_{r(1)}$, the first Frobenius kernel for length $r$ Witt vectors. 

    The representations of $\WW_{r(1)}$ are modules over $A$. The indecomposable representations are thus described by Jordan blocks $J_i$, for $1 \le i \le p^r,$ each of dimension $i$. The block $J_{p^r}$ is the unique indecomposable projective $A$-module. The Green ring for $\WW_{r(1)}$ is easily calculated, and especially well known when $r = 1.$ For now we will only use the following calculation, pertaining to the polynomial identity \ref{poly1}.

    Let $G = \WW_{r(1)}$. The blocks $J_{p^s}$ for $s \le r$ are isomorphic to the induced modules $k\uparrow_H^G$ of trivial modules for the infinitesimal subgroup $H$ corresponding to the subalgebra $\langle x^{p^{r-s}}\rangle$. By Frobenius reciprocity \ref{IndTensor}, we get the identity
    \[J_{p^s} \widetilde\otimes J_{i} \cong J_i\downarrow_{H}^G \uparrow^G_H.\]
    In particular, since $J_{p^s}$ is annihilated by $x^{p^{r-s}}$, we get that each $J_{p^s}$ satisfies the polynomial identity \ref{poly1}, i.e.
    \[J_{p^s}\widetilde\otimes J_{p^s} \cong n J_{p^s},\]
    where $n  = p^s = \dim J_{p^s}.$
    In fact, the only $G$-modules $M$ (of dimension $n$) such that $M\widetilde\otimes M \cong nM$ are of the form $M \cong \frac{n}{p^s}J_{p^s}$ for some $s$. This follows from basic considerations using Jordan canonical forms. 
\end{expo}

\begin{definition}\label{MackeyPoly} For a restricted Lie algebra $\fg$ and $t \in \Nscr(\fg)$ of order $r > 0$, denote $V_{i, \fg}(t) = J_i\uparrow_{\langle t \rangle}^{\fg}$ for $i = 1, \dots, p^r$. Define the \emph{$n^{th}$ Mackey coefficients at $t$} for $\fg$, denoted $c_{i,\fg}^{n}(t)$, such that
\[V_{n, \fg}(t)\downarrow^{\fg}_{\langle t \rangle} \cong \sum c_{i,\fg}^n(t) J_i\]
as $\langle t \rangle$-representations. We define the polynomial identity
\[\rho_1^2 = \sum c_{i,\fg}^1(t) \rho_i,\]
we call the \emph{$1^{st}$ Clebsch-Gordon-Mackey (CGM) polynomial identity} for $\fg$-representations. Hence, by Frobenius reciprocity, the $1^{st}$ CGM polynomial identity is always satisfied by $\rho_i = V_{i, \fg}(t)$, i.e.
\[V_{1, \fg}(t) \widetilde\otimes V_{1, \fg}(t) \cong \sum c_{i,\fg}^1(t)V_{i, \fg}(t).\]
Higher CGM polynomials are derivable from the Green ring for $ \WW_{r(1)},$ but we don't make use of these. 
\end{definition}

\begin{definition}\label{polyIds}
    Let $\fg$ be a restricted Lie algebra. Let \[ F = F(\rho_1, \dots, \rho_n) \in \ZZ [\rho_1, \dots, \rho_n]\] be an integer polynomial, with $F_+, F_-$ the positive and negative components of $F$ respectively, so that $F = F_+ + F_-$. If $V_1, \dots, V_n$ are representations of $\fg$ and $G \in \ZZ[\rho_1, \dots, \rho_n]$ is a polynomial with positive coefficients, we write $G(V_1, \dots, V_n)$ to mean the representation of $\fg$ built from sums $\oplus$ and products $\widetilde\otimes.$
    If $t \in \Nscr(\fg)$ is of positive order and there is isomorphism \[F_+(V_1, \dots, V_n)\downarrow^{\fg}_{\langle t \rangle} \cong -F_-(V_1, \dots, V_n)\downarrow^{\fg}_{\langle t \rangle}\]
    as representations of $\langle t \rangle$,
    then we say \emph{the identity $F = 0$ (or equivalently $F_+ = -F_-$) is witnessed by $t$ for $\rho_i = V_i.$}
    Note that if there exists some $t$ such that a polynomial identity is not witnessed by $t$ for $\rho_i = V_i,$ then the polynomial identity does not hold as representations of $\fg$. 
\end{definition}

The following lemma may be used to extend the negation of Property PA to a larger restricted Lie algebra in a general, nonabelian setting, provided that the induced modules $V_{i, \fg}(t)$ are not projective.  

\begin{lemma}\label{generalAutomorphismExtension}
    Let $\fg'$ be a restricted Lie algebra and $B = u(\fg')$ the restricted enveloping algebra. Assume there exists $t \in \Nscr(\fg')$ with associated prime $\fp' = \fp(t) \in \Xscr(B)$, and an isotropy $\psi \in \Omega(B, \fp')$ such that the $1^{st}$ CGM polynomial identity is not witnessed by $t$ for $\rho_i = V_{i, \fg'}(t)^{\psi^{-1}}$. 

    Now suppose that $\fg$ is a restricted Lie algebra with $\fg' \subset \fg$ a \emph{central} Lie subalgebra, i.e. $[\fg' , \fg] = 0$. Suppose that there exists an augmented automorphism $\varphi$ of the algebra $A = u(\fg)$ extending $\psi,$ i.e. denoting $A = u(\fg),$ that the following diagram commutes
\[\begin{tikzcd}[cramped]
	B & A \\
	B & A.
	\arrow[hook, from=1-1, to=1-2]
	\arrow["\psi"', from=1-1, to=2-1]
	\arrow["\varphi", from=1-2, to=2-2]
	\arrow[hook, from=2-1, to=2-2]
\end{tikzcd}\]
    
    Then the $1^{st}$ CGM polynomial identity is not witnessed by $t$ for $\rho_i = V_{i, \fg}(t)^{\varphi^{-1}},$ where $t \in \Nscr(\fg)$ by the inclusion $\Nscr(\fg') \subset \Nscr(\fg).$
    
    % In other words, we have the the following property: For $W_i = V_{i, \fg'}(t),$ we have
    % \[W_1 \widetilde\otimes  W_1 \cong \sum c_i^1(t, \fg') W_i,\]
    % but
    % \[(W_1^{\psi^{-1}}\widetilde \otimes W_1)\downarrow^{\fg'}_{\langle t\rangle} \cong  \sum c_i^1(t, \fg')W_i^{\psi^{-1}}.\]
\end{lemma}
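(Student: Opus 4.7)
The plan is to reduce the non-witnessing of the $1^{\mathrm{st}}$ CGM identity for $\fg$ to the hypothesized non-witnessing for $\fg'$, by restricting all the relevant representations from $\fg$ down through $\fg'$ to $\langle t \rangle$. The centrality hypothesis $[\fg',\fg]=0$ is precisely what will make induction from $\fg'$ to $\fg$ compatible with restriction back to $\fg'$.

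First I would establish a bimodule decomposition for $u(\fg)$. Since $[\fg',\fg]=0$, the subalgebra $u(\fg') \subset u(\fg)$ lies in the center. Extending a basis of $\fg'$ to a basis of $\fg$ and applying the restricted PBW theorem gives $u(\fg) = \bigoplus_\alpha u(\fg') \cdot e^\alpha$ as a left $u(\fg')$-module; by centrality $u(\fg') \cdot e^\alpha = e^\alpha \cdot u(\fg')$, so each summand is a copy of $u(\fg')$ as a $u(\fg')$-bimodule with diagonal action. Hence $u(\fg) \cong u(\fg')^{\oplus d}$ as a $u(\fg')$-bimodule, where $d = p^{\dim \fg - \dim \fg'}$. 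Transitivity of induction gives $V_{n, \fg}(t) = u(\fg) \otimes_{u(\fg')} V_{n, \fg'}(t)$, so this decomposition yields
\[
V_{n, \fg}(t) \downarrow_{\fg'}^{\fg} \cong V_{n, \fg'}(t)^{\oplus d}
\]
as $u(\fg')$-modules, and further restriction to $\langle t \rangle$ shows that the Mackey coefficients scale as $c_{i, \fg}^1(t) = d \cdot c_{i, \fg'}^1(t)$.

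Next I would track how the twist by $\varphi$ interacts with restriction. The commutative square in the hypothesis means that for every $u(\fg)$-module $M$ the $u(\fg')$-action on $M^{\varphi^{-1}}$ factors as the $u(\fg')$-action on $M$ pulled back by $\psi$, giving
\[
(M^{\varphi^{-1}}) \downarrow_{\fg'}^{\fg} \cong (M \downarrow_{\fg'}^{\fg})^{\psi^{-1}}.
\]
Combining this with the previous step yields $(V_{i, \fg}(t)^{\varphi^{-1}}) \downarrow_{\fg'}^{\fg} \cong (V_{i, \fg'}(t)^{\psi^{-1}})^{\oplus d}$.

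Finally, since the Lie coproduct $\widetilde\Delta$ restricts to Hopf subalgebras, $\widetilde\otimes$ commutes with restriction to $\fg'$ and then to $\langle t \rangle$. Restricting both sides of the $1^{\mathrm{st}}$ CGM identity at $\rho_i = V_{i, \fg}(t)^{\varphi^{-1}}$ all the way down to $\langle t \rangle$ therefore expresses each side as a $d^2$-fold direct sum of the corresponding side of the identity at $\rho_i = V_{i, \fg'}(t)^{\psi^{-1}}$ for $\fg'$ (squaring on the left accounts for the full $d^2$; on the right, one factor of $d$ comes from the Mackey coefficients and another from the restriction of the induced module). By Krull--Schmidt, the two restrictions for $\fg$ are isomorphic if and only if those for $\fg'$ are; since the $\fg'$-identity is not witnessed by $t$, neither is the $\fg$-identity. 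The main technical step is the bimodule decomposition of $u(\fg)$, which is exactly where centrality is used; the rest is careful bookkeeping through induction, restriction, and twist.
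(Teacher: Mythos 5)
Your proposal is correct and follows essentially the same route as the paper's proof: restrict through the central subalgebra $\fg'$ to get $V_{i,\fg}(t)\!\downarrow_{\fg'}\cong V_{i,\fg'}(t)^{\oplus n}$ and the coefficient scaling $c^1_{i,\fg}(t)=n\,c^1_{i,\fg'}(t)$, use that $\varphi$ extends $\psi$ to identify twisted restrictions, and cancel the common multiplicity $n^2$ on both sides of the CGM identity. The only cosmetic differences are that you justify the multiplicity-$n$ restriction directly from the restricted PBW bimodule decomposition (where the paper cites Nichols--Zoeller together with PBW and centrality) and that you invoke Krull--Schmidt explicitly where the paper leaves the final cancellation implicit.
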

\begin{proof}
    Let $W_i = V_{i, \fg'}(t) = J_i\uparrow_{\langle t \rangle }^{\fg'}$ and $V_i = V_{i, \fg}(t) = W_i \uparrow_{\fg'}^{\fg}.$ Let $d_i = c^1_{i, \fg'}(t)$ and $c_i =c^1_{i, \fg}$. Now we have
    \[W_1 \downarrow^{\fg'}_{\langle t \rangle} \cong \sum d_i J_i,\qquad V_1 \downarrow^{\fg}_{\langle t \rangle} \cong \sum c_i J_i.\]
    Since $\fg' \subset \fg$ is central, we have $W\uparrow_{\fg'}^{\fg}\downarrow_{\fg'}^{\fg} \cong nW$ for all representations $W$ of $\fg',$ and hence also  $c_i = n d_i,$ where $n = [\fg : \fg'].$ This follows from the theorem of Nichols and Zoeller \cite{NZ89}, which states that $u(\fg)$ is free of rank $n$ as a left module over the Hopf-subalgebra $u(\fg'),$ and the PBW theorem. 

    By assumption, there is non-isomorphism
    \[
        (W_1^{\psi^{-1}}\widetilde \otimes W_1^{\psi^{-1}})\downarrow^{\fg'}_{\langle t \rangle}\not\cong \sum d_i W_i^{\psi^{-1}}\downarrow^{\fg'}_{\langle t \rangle},
    \]
    and we want to show
    \[
        (V_1^{\varphi^{-1}}\widetilde\otimes V_1^{\varphi^{-1}})\downarrow^{\fg}_{\langle t \rangle}\not\cong \sum c_i V_i^{\varphi^{-1}}\downarrow^{\fg}_{\langle t \rangle}.
    \]
    Let $C$ denote the image $\psi(u (\langle t \rangle)) \subset B.$ Since $\varphi$ extends $\psi$, we have also $C = \varphi(u(\langle t \rangle)) \subset A.$ For representations $W$ of $\fg'$ and $V$ of $\fg$, we have by definition $W^{\psi^{-1}}\downarrow^{\fg'}_{\langle t \rangle} = W\downarrow^B_C$ and $V^{\varphi^{-1}}\downarrow^{\fg}_{\langle t \rangle} = V\downarrow^A_C$ as representations of $\WW_{r(1)}$, where $r$ is the order of $t$. 
    Now on one side we have isomorphisms
    \begin{flalign*}
        (V_1^{\varphi^{-1}}\widetilde\otimes V_1^{\varphi^{-1}})\downarrow^{\fg}_{\langle t \rangle} &\cong V_1\downarrow^A_C \widetilde\otimes V_1\downarrow^A_C\\
        &\cong k\uparrow_{\langle t \rangle}^{\fg'}\uparrow^{\fg}_{\fg'}\downarrow^A_{B}\downarrow^{B}_{C} \widetilde\otimes k\uparrow_{\langle t \rangle}^{\fg'}\uparrow^{\fg}_{\fg'}\downarrow^A_{B}\downarrow^{B}_{C}\\
        &\cong n^2 W_1 \downarrow^{B}_{C} \widetilde\otimes W_1 \downarrow^{B}_{C}\\
        &\cong n^2 (W_1^{\psi^{-1}}\widetilde \otimes W_1^{\psi^{-1}})\downarrow^{\fg'}_{\langle t \rangle}.
    \end{flalign*}
    On the other side we have similarly $V_i^{\varphi^{-1}}\downarrow^{\fg}_{\langle t \rangle} \cong n W_i^{\psi^{-1}}\downarrow^{\fg'}_{\langle t\rangle }$ for each $i$, and hence
    \[\sum c_i V_i^{\varphi^{-1}}\downarrow^{\fg}_{\langle t \rangle} \cong n^2 \left( \sum d_i W_i^{\psi^{-1}}\downarrow^{\fg'}_{\langle t \rangle}\right).\]
    The desired non-isomorphism follows immediately. 
\end{proof}

For abelian restricted Lie algebras, or more generally whenever $t \in \Nscr(\fg)$ is central, the Mackey coefficients are quite simple. We state the next few results which are considerably specialized to this situation, and directly adapt the proof of Theorem \ref{nPA}. 

\begin{lemma}\label{nPALemma}
    Let $\fg$ be a restricted Lie algebra and $A = u(\fg)$ its restricted enveloping algebra. Suppose there exists a \emph{central} nilpotent element $t \in \Nscr(\fg)$ with associated prime $\fp = \fp(t) \in \Xscr(A)$, and an isotropy $\varphi \in \Omega(A, \fp)$ (\ref{isotropies}), such that $V^{\varphi^{-1}}\!\!\!\downarrow^\fg_{\langle t \rangle}$ is not isomorphic to the $\langle t\rangle$-module $nJ_{p^s}$, for any $n, s \ge 0$, where $V = V_{1, \fg}(t)$. Then $\fg$ does not satisfy Property PA. 
    
    In particular, $\Xscr(A, V)  = \{\fp\},$ and $\fp$ is noble for both $\widetilde G, \widetilde G^\varphi$ (the infinitesimal group scheme and its twist), but there is non-isomorphism
    \[V\widetilde \otimes V \not\cong V\widetilde\otimes^\varphi V.\]
\end{lemma}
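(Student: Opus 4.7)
The plan is to mirror the argument of Theorem~\ref{nPA} at a more general level, exploiting the centrality of $t$ to reduce everything to the Witt-vector computation at the end of~\ref{WittVectorExpo}. Write $V = V_{1,\fg}(t) = k\uparrow^{\fg}_{\langle t\rangle}$ and set $n := \dim V$. Since $t$ is central, $u(\langle t\rangle)$ is a central Hopf subalgebra of $A$, so by the Nichols--Zoeller theorem (used already in Lemma~\ref{generalAutomorphismExtension}) $A$ is free of rank $n$ over $u(\langle t\rangle)$, and $t$ annihilates the quotient $V = A\otimes_{u(\langle t\rangle)}k$. Hence $V\downarrow^{\fg}_{\langle t\rangle}\cong nJ_1$, so Frobenius reciprocity (equation~\ref{IndTensor}) gives $V\widetilde\otimes V\cong (V\downarrow^{\fg}_{\langle t\rangle})\uparrow^{\fg}_{\langle t\rangle}\cong nV$.

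Next, the natural isomorphism $(X\otimes Y)^\varphi\cong X^\varphi\otimes^\varphi Y^\varphi$, applied to $\otimes = \widetilde\otimes$ and its twist, shows that the desired non-isomorphism $V\widetilde\otimes V\not\cong V\widetilde\otimes^\varphi V$ is equivalent to $V^{\varphi^{-1}}\widetilde\otimes V^{\varphi^{-1}}\not\cong nV^{\varphi^{-1}}$. Since $u(\langle t\rangle)\subset A$ is a Hopf subalgebra with respect to $\widetilde\Delta$, restriction to $\langle t\rangle$ commutes with $\widetilde\otimes$. Let $M := V^{\varphi^{-1}}\downarrow^{\fg}_{\langle t\rangle}$, a module of dimension $n$ over $u(\langle t\rangle)\cong k\WW_{r(1)}$, where $r$ is the order of $t$. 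It then suffices to show $M\widetilde\otimes M\not\cong nM$. By the closing remark of~\ref{WittVectorExpo}, the identity $M\widetilde\otimes M\cong (\dim M)\,M$ characterizes modules of the form $M\cong (n/p^s)J_{p^s}$; the hypothesis on $V^{\varphi^{-1}}\downarrow^{\fg}_{\langle t\rangle}$ is precisely the negation of this shape, and hence delivers the non-isomorphism.

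To conclude the failure of Property PA, I would then verify that $V$ belongs to $\cC(\fp)$ and that $\fp$ is noble for both $\widetilde G$ and its twist $\widetilde G^\varphi$. Nobility for $\widetilde G$ is Proposition~\ref{tautnoble}; for $\widetilde G^\varphi$, Lemma~\ref{twisting} together with the isotropy condition $\varphi^*(\fp) = \fp$ gives the conclusion. The support equality $\Xscr(A,V) = \{\fp\}$ follows by pulling $V$ back along arbitrary $\pi$-points $\alpha$ of $\widetilde G$ over any field extension $K/k$: using centrality of $t$ and the inducted structure of $V$, the pullback $\alpha^*(V_K)$ is free whenever the line representing $\alpha$ in $\Nscr_1(\fg_K)$ does not lie in the orbit of $t^{[p]^{r-1}}$. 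The pair $V, V\in\cC(\fp)$ with the twisted product $\widetilde\otimes^\varphi$ then realizes a failure of PA at the noble point $\fp$ for the group scheme $\widetilde G^\varphi$.

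The main obstacle I anticipate is the support equality $\Xscr(A,V) = \{\fp\}$: though intuitively clear from centrality and Frobenius reciprocity, a careful argument must track $\pi$-point equivalence classes over arbitrary extensions $K/k$ and the Jordan-block decomposition of $\alpha^*(V_K)$ as the line $\langle s\rangle\subset\Nscr_1(\fg_K)$ varies. The algebraic core of the proof—the reduction to the elementary Witt-vector statement of~\ref{WittVectorExpo}—is by contrast entirely routine once the identification $V\downarrow^{\fg}_{\langle t\rangle}\cong nJ_1$ is in hand.
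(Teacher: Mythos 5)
Your proposal is correct and takes essentially the same route as the paper's proof: restrict to $\langle t\rangle$ using centrality to get $V\downarrow^{\fg}_{\langle t\rangle}\cong nJ_1$, deduce $V\widetilde\otimes V\cong nV$ by Frobenius reciprocity, reduce via the twisting isomorphism to showing $V^{\varphi^{-1}}$ violates this identity, and conclude from the Witt-vector characterization at the end of \ref{WittVectorExpo}, with nobility handled by Proposition \ref{tautnoble} and Lemma \ref{twisting}. The support equality you flag as the main obstacle is exactly what the paper dispatches with the PBW-basis argument (spelled out in \ref{PBWsupport}), which is the rigorous form of the freeness claim you sketch for $\pi$-points away from $\fp$, combined with the observation that $V\downarrow^{\fg}_{\langle t\rangle}$ is trivial, hence not projective, so $\fp\in\Xscr(A,V)$.
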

\begin{proof}
    The support condition $\Xscr(A, V) = \{\fp\}$ is automatic for $V = V_{1, \fg}(t)$ with $t$ a central nilpotent element: For general $t$, it follows from the PBW theorem that $\Xscr(A, V_{1, \fg}(t)) \subseteq \{ \fp (t) \},$ and for central $t$ we have $V_{1, \fg}(t)\downarrow^{\fg}_{\langle t \rangle} = [\fg : \langle t \rangle ]J_1.$ 
    The prime $\fp = \fp(t)$ is noble for $\widetilde G$ by construction, and since we assumed $\varphi \in \Omega(A, \fp)$, we also have, by Lemma \ref{twisting}, that $\varphi^*(\fp) =\fp$ is noble for $\widetilde G^\varphi.$
    
    Suppose $t \in \Nscr(\fg)$ is of order $r$, and let $\fh = \langle t \rangle$ be the $r$-dimensional subalgebra. Since $\fg$ is abelian, say of dimension $n$, we get 
    \[V\downarrow^\fg_\fh = k\uparrow_{\fh}^\fg\downarrow^\fg_\fh = [\fg : \fh] k,\]
    where $[\fg : \fh] =\dim V = p^{n - r}$. By \ref{IndTensor}, we have the polynomial identity \ref{poly1} is satisfied by $V$:
    \[V\widetilde\otimes V \cong [\fg : \fh ] V.\]
    But we assumed that $V^{\varphi^{-1}}\!\!\!\downarrow_{\fh}^\fg$ is not isomorphic to any $nJ_{p^s}$. Supposing that $V^{\varphi^{-1}}$ satisfies the same polynomial identity \ref{poly1}, by restricting we get
    \begin{flalign*}
        [\fg : \fh]V^{\varphi^{-1}}\!\!\!\downarrow_\fh^\fg &\cong (V^{\varphi^{-1}}\widetilde\otimes V^{\varphi^{-1}})\downarrow_\fh^\fg\\
        &\cong (V^{\varphi^{-1}}\!\!\!\downarrow_\fh^\fg) \widetilde \otimes (V^{\varphi^{-1}}\!\!\!\downarrow_\fh^\fg).
    \end{flalign*}
    This contradicts the calculation we gave at the end of \ref{WittVectorExpo}. Now we know $V^{\varphi^{-1}}\widetilde\otimes V^{\varphi^{-1}} \not\cong [\fg : \fh]V^{\varphi^{-1}}$ and therefore, twisting both sides, we have \[V\widetilde\otimes^\varphi V \not\cong [\fg : \fh] V \cong V\widetilde\otimes V.\]
\end{proof}

\begin{cor}\label{automorphismExtension}
Let $\fg'$ be an abelian restricted Lie algebra, and let $B = u(\fg')$ denote the restricted enveloping algebra. Assume there exists $t \in \Nscr (\fg' )$ with associated prime $\fp' = \fp(t) \in \Xscr(B),$ and an isotropy $\psi \in \Omega(B, \fp')$ such that $W^{\psi^{-1}}\!\!\!\downarrow_{\langle t \rangle}^{\fg'}$ is not isomorphic to the $\langle t\rangle$-module $nJ_{p^s}$ for any $n, s \ge 0$,
where $W = V_{1, \fg'}(t).$

Now suppose that $\fg$ is a restricted Lie algebra with $\fg' \subset \fg$ a central Lie subalgebra.  Suppose that there exists an augmented automorphism $\varphi$ of the algebra $u(\fg)$ extending $\psi$.

Let $\fp = \fp(t) \in \Xscr(A)$ be the prime associated to $t \in \Nscr(\fg') \subset \Nscr(\fg)$. 
Then we have that $\varphi \in \Omega(A, \fp)$ is an isotropy such that $V^{\varphi^{-1}}\!\!\!\downarrow^\fg_{\langle t \rangle}$ is not isomorphic to any $nJ_{p^s}$, for any $n, s \ge 0$, where $V = V_{1, \fg}(t)$. 
Further, we have that $\Xscr(A, V) = \{\fp\}$, that $\fp$ is noble for both the infinitesimal group scheme $\widetilde G$ associated to $\fg$ and for its twist $\widetilde G^{\varphi}$, and that 
\[V \widetilde \otimes V \not\cong V\widetilde\otimes^\varphi V,\]
so we may conclude that $\fg$ does satisfy Property PA.
\end{cor}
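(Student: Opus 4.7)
The plan is to verify, item by item, each assertion in the conclusion of the corollary, and then to deduce the final property claim by direct appeal to Lemma \ref{nPALemma} applied to $\fg$, the central nilpotent $t$, the prime $\fp = \fp(t)$, and the automorphism $\varphi$. The bridge between the hypothesis on $\fg'$ and the conclusion for $\fg$ is that $\fg' \subset \fg$ is central and that $\varphi$ extends $\psi$; these two pieces let one transfer the hypothesis about $W^{\psi^{-1}}\!\downarrow^{\fg'}_{\langle t\rangle}$ directly upward to the analogous statement about $V^{\varphi^{-1}}\!\downarrow^{\fg}_{\langle t\rangle}$, at which point Lemma \ref{nPALemma} can be invoked wholesale.

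The first computational step relates $V$ to $W$. By transitivity of induction along the tower $\langle t \rangle \subset \fg' \subset \fg$, one has $V = k\uparrow^{\fg}_{\langle t\rangle} \cong W\uparrow^{\fg}_{\fg'}$. Since $\fg'$ is a central restricted Lie subalgebra of $\fg$, the theorem of Nichols and Zoeller \cite{NZ89} combined with PBW gives that $A = u(\fg)$ is free of rank $n = [\fg : \fg']$ over $B = u(\fg')$, whence $V\downarrow^{\fg}_{\fg'} \cong nW$. Setting $C = \varphi(u(\langle t \rangle)) = \psi(u(\langle t\rangle)) \subset B$, which makes sense because $\varphi$ extends $\psi$, one finds
\[
V^{\varphi^{-1}}\!\downarrow^{\fg}_{\langle t\rangle} \,=\, V\downarrow^{A}_{C} \,\cong\, (V\downarrow^{A}_{B})\downarrow^{B}_{C} \,\cong\, n\,W\downarrow^{B}_{C} \,=\, n\, W^{\psi^{-1}}\!\downarrow^{\fg'}_{\langle t\rangle}.
\]
If this $\langle t \rangle$-module were isomorphic to $n'J_{p^s}$ for some $n'$ and $s$, then by uniqueness of Krull--Schmidt decomposition over the local ring $u(\langle t\rangle) \cong k[x]/x^{p^r}$, the module $W^{\psi^{-1}}\!\downarrow^{\fg'}_{\langle t\rangle}$ would also be a multiple of a single $J_{p^s}$-block, contradicting the hypothesis on $\psi$.

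Next I would verify that $\varphi \in \Omega(A, \fp)$. The prime $\fp \in \Xscr(A)$ is represented by the $\pi$-point $u(\langle t^{[p]^{r-1}}\rangle) \hookrightarrow A$ where $r$ is the order of $t$; the prime $\fp' \in \Xscr(B)$ is represented by the same $\pi$-point factored through $B$. Since $\varphi$ restricts to $\psi$ on $B$, the automorphism $\varphi^*$ on $\Xscr(A)$ is compatible, under the natural map $\Xscr(B) \to \Xscr(A)$ induced by $B \hookrightarrow A$, with $\psi^*$ on $\Xscr(B)$. As $\psi^*(\fp') = \fp'$, this forces $\varphi^*(\fp) = \fp$. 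Nobility of $\fp$ for the infinitesimal group scheme $\widetilde G$ is immediate from the Hopf-subalgebra inclusion realizing $\fp(t)$; by Lemma \ref{twisting}, it passes to nobility of $\fp = \varphi^*(\fp)$ for the twist $\widetilde G^\varphi$.

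With all hypotheses of Lemma \ref{nPALemma} in place---a central element $t \in \Nscr(\fg)$, an isotropy $\varphi \in \Omega(A, \fp)$ for $\fp = \fp(t)$, and the required non-triviality of the Jordan block structure of $V^{\varphi^{-1}}\!\downarrow^{\fg}_{\langle t\rangle}$---a single invocation of that lemma delivers all the remaining claims: the support $\Xscr(A, V) = \{\fp\}$, nobility of $\fp$ for both $\widetilde G$ and $\widetilde G^\varphi$, the non-isomorphism $V\widetilde\otimes V \not\cong V\widetilde\otimes^\varphi V$, and the final statement about Property PA for $\fg$. The main technical obstacle is the transfer of the Jordan-block restriction hypothesis from $\fg'$ to $\fg$; this is resolved cleanly by the Nichols--Zoeller free-module structure and Krull--Schmidt, so the remaining verifications reduce to bookkeeping with the extension property of $\varphi$.
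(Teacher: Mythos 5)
Your proposal is correct and matches the paper's own argument in all essentials: you transfer the Jordan-block hypothesis via $V^{\varphi^{-1}}\!\downarrow^{\fg}_{\langle t\rangle}\cong n\,W^{\psi^{-1}}\!\downarrow^{\fg'}_{\langle t\rangle}$ (centrality of $\fg'$ plus Nichols--Zoeller/PBW freeness, which is exactly the computation underlying Lemma \ref{generalAutomorphismExtension}), show $\varphi\in\Omega(A,\fp)$ by reducing to $\psi\in\Omega(B,\fp')$ through restriction along $B\hookrightarrow A$, and hand the remaining conclusions to Lemmas \ref{twisting} and \ref{nPALemma}. The only cosmetic differences are that the paper invokes Lemma \ref{generalAutomorphismExtension} instead of redoing the restriction computation with Krull--Schmidt, and verifies the isotropy and support claims by explicit restriction of $A$-modules to $B$ rather than by appealing to functoriality of $\Xscr$.
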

\begin{proof}
    Let $C$ denote the subalgebra $\psi(u(\langle t \rangle) \subset B$. That $\psi \in \Omega(B, \fp')$ is equivalent to the claim, for any $B$-module $M$, that $M\!\downarrow^B_C$ is projective if and only if $M\!\downarrow^B_{\langle t \rangle}$ is projective. A similar equivalence will work to show $\varphi \in \Omega(A, \fp)$. Since $\varphi$ extends $\psi$, we have $C = \varphi(u(\langle t \rangle))$ also as a subalgebra of $A.$ 
    
    Let $M$ be any $A$-module, and let $N = M\downarrow^A_B$ be its restriction. 
    Assuming $\psi \in \Omega(B, \fp')$, we have immediately that $M\!\downarrow^A_C = N\!\downarrow^B_C$ is projective if and only if $M\!\downarrow^A_{u(\langle t \rangle)} = N\!\downarrow^B_{u(\langle t \rangle)}$ is projective. Thus $\varphi \in \Omega(A, \fp)$. 

    Now we have $V = k\!\uparrow^{\fg}_{\langle t \rangle} = W\!\uparrow^\fg_{\fg'},$ and we have assumed that $W^{\psi^{-1}}\!\!\! \downarrow^{\fg'}_{\langle t \rangle}$ is not isomorphic to any $nJ_{p^s}$. 
    By definition $V^{\varphi^{-1}}$ is the base change of $V$ along $A \xrightarrow{\varphi^{-1}} A$. Since $V = W\!\uparrow^\fg_{\fg'} = A\otimes_{B} W$, we then have $V^{\varphi^{-1}}$ is the base change of $W$ along the composition $B \hookrightarrow A\xrightarrow{\varphi^{-1}} A.$ Since $\varphi$ extends $\psi$, we also have $\varphi^{-1}$ extends $\psi^{-1}$. Hence we have isomorphism 
    $V^{\varphi^{-1}} \cong W^{\psi^{-1}}\!\uparrow_{\fg'}^\fg.$

    By Lemma \ref{generalAutomorphismExtension} we have $V^{\varphi^{-1}}\downarrow^{\fg}_{\langle t \rangle}$ is not isomorphic to any $mJ_{p^s}$, as we know this restriction result is equivalent to the CGM polynomial identity being witnessed by $t$, by Lemma \ref{nPALemma}.
    % By the theorem of Nichols and Zoeller \cite{NZ89}, we have $u(\fg)$ is free, of rank $n = [\fg : \fg']$, as a left module over its Hopf-subalgebra $u(\fg')$. We assumed that $\fg' \subset \fg$ was a central subalgebra, and so we have natural isomorphism 
    % \[M\!\uparrow^{\fg}_{\fg'}\!\downarrow^{\fg}_{\fg'}\cong n M\] for any $u(\fg')$-module $M.$ Therefore
    % \begin{flalign*}
    %     V^{\varphi^{-1}}\!\downarrow^\fg_{\langle t \rangle} \cong
    %     W^{\psi^{-1}}\!\!\!\uparrow_{\fg'}^\fg\!\downarrow^{\fg}_{\langle t \rangle} &\cong W^{\psi^{-1}}\!\! \!\uparrow_{\fg'}^\fg \!\downarrow_{\fg'}^\fg\!\downarrow^{\fg'}_{\langle t \rangle}\\
    %     &\cong nW^{\psi^{-1}}\!\!\!\downarrow^{\fg'}_{\langle t \rangle}.
    % \end{flalign*}
    % Therefore $V^{\varphi^{-1}}\downarrow^{\fg}_{\langle t \rangle }$ is not isomorphic to any $mJ_{p^s}$. 

    What remains is to show that $\Xscr(A, V) = \{\fp\}.$ For this, assuming again that $\fg'$ is central in $\fg$, the Nichols-Zoeller basis shows that, supposing $t$ is of rank $r$, $V\downarrow_{\langle t^{[p]^{r-1}}\rangle}$ is not projective. Therefore $\fp = \fp(t)$ belongs to $\Xscr(A, V)$. A simple argument using the PBW basis (reviewed in \ref{PBWsupport}) shows, for any restricted Lie algebra $\fg$, that if $V = k \uparrow^{\fg}_{\langle t \rangle }$ for some nonzero $t \in \Nscr(\fg),$ then $\Xscr(u(\fg), V) \subseteq \{\fp(t)\},$ so we are done.
\end{proof}
\begin{cor}\label{directSumExtension}
Let $\fg'$ be a restricted Lie algebra meeting the same hypotheses as Lemma \ref{nPALemma}, and $\fg''$ any restricted Lie algebra. Then $\fg = \fg' \oplus \fg''$ also meets the hypotheses of Lemma \ref{nPALemma} and therefore $\fg$ does not satisfy Property PA.  
\end{cor}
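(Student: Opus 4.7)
The plan is to verify that $\fg = \fg' \oplus \fg''$ itself meets the hypotheses of Lemma \ref{nPALemma}, then invoke that lemma. I would take the same nilpotent element $t \in \Nscr(\fg') \subset \Nscr(\fg)$; since $t$ is central in $\fg'$ by assumption and $[\fg', \fg''] = 0$ in a direct sum of restricted Lie algebras, $t$ remains central in $\fg$. Setting $A = u(\fg)$, $B = u(\fg')$ and $\fp = \fp(t) \in \Xscr(A)$, the PBW theorem gives an identification $A \cong B \otimes_k u(\fg'')$ of augmented $k$-algebras. Under this identification I would define $\varphi = \psi \otimes \id_{u(\fg'')}$, an augmented algebra automorphism of $A$.

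To verify $\varphi \in \Omega(A, \fp)$, I would repeat the argument in the proof of Corollary \ref{automorphismExtension}: letting $C = \varphi(u(\langle t \rangle))$, observe that $C$ lies in $B$ and coincides with $\psi(u(\langle t \rangle))$; then for any $A$-module $M$, the equivalence between projectivity of $M\!\downarrow^A_{u(\langle t \rangle)}$ and of $M\!\downarrow^A_C$ follows by restricting first to $B$ and applying the given hypothesis $\psi \in \Omega(B, \fp')$. The key remaining step is the twisted restriction calculation. By transitivity of induction, $V := V_{1,\fg}(t) = W\uparrow^{\fg}_{\fg'}$ where $W = V_{1,\fg'}(t)$, and under the PBW decomposition this yields an $A$-module isomorphism $V \cong W \otimes_k u(\fg'')$, with $B$ acting on the first tensor factor and $u(\fg'')$ acting by left multiplication on the second (the two actions commute precisely because $[\fg', \fg''] = 0$). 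Base change along $\varphi^{-1} = \psi^{-1} \otimes \id$ then gives $V^{\varphi^{-1}} \cong W^{\psi^{-1}} \otimes_k u(\fg'')$.

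Restricting to $\langle t \rangle \hookrightarrow \fg' \hookrightarrow \fg$, and noting that $\langle t \rangle$ acts trivially on $u(\fg'')$ since $t$ commutes with $\fg''$, I obtain
\[
V^{\varphi^{-1}}\!\downarrow^{\fg}_{\langle t \rangle} \;\cong\; p^{\dim \fg''} \cdot \bigl(W^{\psi^{-1}}\!\downarrow^{\fg'}_{\langle t \rangle}\bigr).
\]
By the Krull--Schmidt theorem, the left-hand side is isomorphic to $nJ_{p^s}$ for some $n, s \ge 0$ if and only if $W^{\psi^{-1}}\!\downarrow^{\fg'}_{\langle t \rangle}$ is itself a multiple of $J_{p^s}$, and the latter is forbidden by the hypothesis on $\fg'$. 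Thus all the hypotheses of Lemma \ref{nPALemma} are verified for $\fg$, and that lemma delivers the conclusion that $\fg$ fails Property PA.

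The main technical obstacle I anticipate is writing down the identification $V \cong W \otimes_k u(\fg'')$ compatibly with the $A \cong B \otimes_k u(\fg'')$ decomposition and with the base change along $\varphi^{-1}$; once that bookkeeping is made precise, the restriction calculation and the Krull--Schmidt step are routine, and the isotropy condition $\varphi \in \Omega(A, \fp)$ follows verbatim from the argument already in Corollary \ref{automorphismExtension}.
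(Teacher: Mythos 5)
Your proof is correct, and it starts exactly as the paper does: identify $u(\fg)\cong u(\fg')\otimes_k u(\fg'')$ and extend the isotropy as $\varphi = \psi\otimes \id_{u(\fg'')}$. Where the paper then simply asserts that $\fg'$ is central in $\fg$ and cites Corollary \ref{automorphismExtension}, you re-derive that corollary's conclusion directly in the direct-sum case: you verify $\varphi\in\Omega(A,\fp)$ by the same restriction-to-$B$ argument, identify $V\cong W\otimes_k u(\fg'')$ compatibly with the twist, compute $V^{\varphi^{-1}}\!\downarrow^{\fg}_{\langle t\rangle}\cong p^{\dim\fg''}\,\bigl(W^{\psi^{-1}}\!\downarrow^{\fg'}_{\langle t\rangle}\bigr)$, and finish with Krull--Schmidt. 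The trade-off: the paper's route is a one-line reduction to machinery already in place (Corollary \ref{automorphismExtension}, hence Lemma \ref{generalAutomorphismExtension}), while yours is longer but self-contained and uses only that $t$ is central in $\fg$, which is automatic from $[\fg',\fg'']=0$, rather than that all of $\fg'$ is central in $\fg$. Since Corollary \ref{automorphismExtension} is stated for \emph{abelian} $\fg'$ central in $\fg$, whereas the hypotheses of Lemma \ref{nPALemma} --- and hence the corollary you are proving --- permit a nonabelian $\fg'$ possessing merely a central nilpotent $t$, your direct computation actually covers the stated generality a bit more cleanly than the paper's citation does.
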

\begin{proof}
We have isomorphism of algebras $u(\fg) \cong u(\fg')\otimes u(\fg'').$ Therefore an isotropy $\psi \in \Omega(\fg', \fp')$ extends to an automorphism $\varphi : u(\fg) \to u(\fg)$, defined by $\varphi = \psi \otimes u(\fg'').$ Since $\fg'$ is a central subalgebra of $\fg$, we apply Corollary \ref{automorphismExtension}, and get that $fg$ meets the hypothesis of Lemma \ref{nPALemma}. 
\end{proof}

\subsection{Representation type of abelian restricted Lie algebras}\label{repTypesforAbLie}

We begin by recalling the structure theorem for abelian restricted Lie algebras over an algebraically closed field $k$. We denote $\fn_n$ the $p$-nilpotent cyclic Lie algebra of dimension $n$, i.e.
\[\fn_n = \langle x_1, \dots, x_n \mid x_i^{[p]} = x_{i+1}, \text{ where }x_{n+1} = 0\rangle.\]
Denote $\ft = \langle x  \mid x^{[p]} = x\rangle$ the $1$-dimensional torus. 
\begin{thm}\label{SeligmanStructure}(Seligman, 1967 \cite{Sel67}) Let $\fg$ be an ablian restricted Lie algebra of finite dimension over $k$. Then $\fg$ has a direct sum decomposition as 
\[\fg \cong \ft^r \oplus \sum_{i \ge 1} \fn_i^{s_i}\]
for some $r \ge 0$ and finitely many nonzero $s_i \ge 0.$
\end{thm}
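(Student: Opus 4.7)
The plan is to exploit that, since $\fg$ is abelian, the restriction map $\phi := [p] \colon \fg \to \fg$ is $p$-semilinear: it is additive and satisfies $(\lambda x)^{[p]} = \lambda^p x^{[p]}$ for $\lambda \in k$. Classifying $\fg$ up to isomorphism of abelian restricted Lie algebras thus amounts to classifying finite-dimensional $k$-vector spaces equipped with a $p$-semilinear endomorphism, and $\ft$ and $\fn_i$ are the irreducible building blocks.

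First I would apply Fitting's lemma to $\phi$. Since $k$ is perfect, images of subspaces under $\phi$ remain subspaces, so the descending chain $\fg \supseteq \phi(\fg) \supseteq \phi^2(\fg) \supseteq \cdots$ stabilizes at a subspace $\fg_s$, and dually $\fg_n := \bigcup_n \ker(\phi^n)$ is the stable kernel. A standard argument then yields a $\phi$-invariant decomposition $\fg = \fg_s \oplus \fg_n$ on which $\phi$ is, respectively, bijective and nilpotent. Both summands are restricted Lie subalgebras because $\fg$ is abelian (so closure under $[p]$ suffices, with no bracket conditions to check).

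For the toral part $\fg_s$, I would invoke the classical fact that a bijective $p$-semilinear operator on a finite-dimensional vector space over algebraically closed $k$ of characteristic $p$ admits a basis of $\phi$-fixed vectors. More precisely, $U := \{x \in \fg_s : x^{[p]} = x\}$ is an $\FF_p$-form of $\fg_s$ with $\dim_{\FF_p} U = \dim_k \fg_s$, and the natural map $k \otimes_{\FF_p} U \to \fg_s$ is an isomorphism of restricted Lie algebras. This can be proven by Lang's theorem applied to $\GL(\fg_s)$, or directly by noting that the variety of fixed points is cut out by a polynomial system whose Jacobian is the constant matrix $-I$, hence is \'etale of cardinality $p^{\dim \fg_s}$. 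Each element of $U$ generates a copy of $\ft$, yielding $\fg_s \cong \ft^r$ with $r = \dim_k \fg_s$.

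For $\fg_n$ I would carry out a $p$-semilinear analogue of Jordan normal form by induction on dimension. Pick $x \in \fg_n$ of maximal $[p]$-order $i$ (so $x^{[p]^{i-1}} \neq 0$ and $x^{[p]^i} = 0$); the $\phi$-stable subspace $\langle x, x^{[p]}, \ldots, x^{[p]^{i-1}} \rangle$ is isomorphic to $\fn_i$ as a restricted Lie algebra. The main step, and the main obstacle, is to produce a $\phi$-stable complement: starting from any $k$-linear complement, its generators must be adjusted to kill cross-terms with $\langle x \rangle$ under $\phi$, and each adjustment requires solving $p$-th power equations in $k$ (hence the essential use of algebraic closure), while the maximality of $i$ guarantees that the inductive adjustment terminates. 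The induction hypothesis applied to the complement then produces the claimed direct sum $\bigoplus_i \fn_i^{s_i}$, with only finitely many nonzero $s_i$ by finite-dimensionality of $\fg$.
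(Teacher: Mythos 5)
The paper offers no proof of this statement at all; it is imported verbatim from Seligman, so there is nothing internal to compare against. Your outline is, in substance, a correct and self-contained proof, and it is the standard one: identify an abelian restricted Lie algebra with a pair $(V,\phi)$ where $\phi=[p]$ is $p$-semilinear (the correction terms in $(x+y)^{[p]}$ vanish because the bracket does), split off the part where $\phi$ is bijective by Fitting (legitimate here because over a perfect field kernels and images of $\phi$ are subspaces and semilinear bijections preserve dimension), handle the bijective part by the classical fact that a bijective $p$-semilinear operator over an algebraically closed field has a basis of $\phi$-fixed vectors, and handle the nilpotent part by a semilinear Jordan normal form. For the toral step I would lean on your second argument rather than Lang's theorem: the fixed-point system $Ax^{(p)}=x$ has constant Jacobian $-I$, no solutions at infinity since $A$ is invertible, hence exactly $p^{\dim\fg_s}$ reduced solutions, and the usual shortest-relation trick (apply $\phi$ to a minimal $k$-linear dependence among $\FF_p$-independent fixed vectors and subtract) shows $k\otimes_{\FF_p}U\to\fg_s$ is an isomorphism; Lang--Steinberg also works for arbitrary algebraically closed $k$, but only in Steinberg's generalized form, so it is the less economical route.

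The one place your sketch is thinner than it should be is the production of a $\phi$-stable complement to $W=\langle x,x^{[p]},\dots,x^{[p]^{i-1}}\rangle$; ``adjusting the generators of an arbitrary linear complement'' is the right intuition but the termination claim is where the content lives. The clean way to organize it: take $C$ maximal among $\phi$-stable subspaces with $C\cap W=0$; if $W+C\neq V$, choose $v\notin W+C$ with $\phi(v)=w+c\in W+C$ (possible since $\phi^i=0$ on $V$ by maximality of $i$); then $\phi^{i-1}(w)=-\phi^{i-1}(c)\in W\cap C=0$, so the coefficient of $x$ in $w$ vanishes and $w=\phi(u)$ with $u\in W$ — extracting $p$-th roots of the coefficients here is the only use of perfectness you flagged — and replacing $v$ by $v-u$ enlarges $C$ to the $\phi$-stable subspace $C+k(v-u)$ still meeting $W$ trivially, contradicting maximality. (Equivalently, the nilpotent part is the elementary-divisor theorem for the twisted polynomial ring $k[F]$ with $F\lambda=\lambda^pF$, a principal ideal domain when $k$ is perfect, whose cyclic torsion modules annihilated by powers of $F$ are exactly the $\fn_i$.) With that step made precise, your argument proves the theorem; note also that algebraic closure is genuinely needed only for the toral part, while the nilpotent classification needs only perfectness.
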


For any restricted Lie algebra $\fg$, the algebra $\ft \oplus \fg$ has the same representation type as $\fg$. This is because $u(\ft)$ is isomorphic to a direct product of $p$ many copies of $k$, so $u(\ft \oplus \fg) \cong u(\ft) \otimes u(\fg)$ is a direct product of $p$ many copies of $u(\fg).$ Therefore, in classifying abelian Lie algebras $\fg$ according to representation type, we may reduce to the nullcone $\Nscr(\fg)$, which for $\fg$ abelian is a Lie subalgebra. By Seligman's structure theorem, $\Nscr(\fg)$ is a direct sum of copies of $\fn_n,\ n \ge 1$.  
\begin{thm}
    Let $\fg$ be an abelian restricted Lie algebra of finite dimension over $k$, and let $n$ be the dimension of $\Nscr(\fg)$. 
    \begin{enumerate}[I.]
        \item If $\Nscr( \fg)$ is cyclic (i.e. isomorphic to $\fn_n$), then $\fg$ is of finite representation type. 
        \item If $\Nscr(\fg)$ is not cyclic and $p^n = 4$ (i.e. $p = n = 2$), then $\fg$ is of tame representation type. 
        \item In any other case $p^n > 4$ and we have $\fg$ is of wild representation type. 
    \end{enumerate}
\end{thm}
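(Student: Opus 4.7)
The plan is to reduce to the nullcone, apply Theorem \ref{SeligmanStructure} to present $u(\fg)$ concretely as a commutative local $k$-algebra, then handle the three cases separately: classifying all indecomposables in (I) and (II), and in (III) exhibiting a known-wild algebra quotient of $u(\fg)$. For the reduction, $u(\ft)$ is a direct product of $p$ copies of $k$, so $u(\ft^r \oplus \Nscr(\fg))$ is a product of $p^r$ copies of $u(\Nscr(\fg))$ and therefore has the same representation type; thus one may assume $\fg = \Nscr(\fg)$. Seligman's theorem then writes $\fg = \bigoplus_{j=1}^{m} \fn_{i_j}$ with $\sum_j i_j = n$, so
\[
u(\fg) \;\cong\; \bigotimes_{j=1}^{m} k[x_j]/(x_j^{p^{i_j}}) \;\cong\; k[x_1,\dots,x_m]/(x_1^{p^{i_1}},\dots,x_m^{p^{i_m}}),
\]
a commutative local $k$-algebra of $k$-dimension $p^n$ whose maximal ideal is minimally generated by $m$ elements. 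The hypothesis of (I) corresponds to $m = 1$; the hypothesis $p^n = 4$ of (II) forces $p = n = m = 2$ with $i_1 = i_2 = 1$; all remaining possibilities fall under (III).

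In case (I), $u(\fg) = k[x]/x^{p^n}$ is uniserial, so every finite module is a direct sum of cyclic modules $k[x]/x^i$ for $1 \le i \le p^n$, giving exactly $p^n$ indecomposables and hence finite representation type. In case (II), $k[x,y]/(x^2, y^2)$ coincides as an associative algebra with the Klein four group algebra in characteristic two; the explicit classification of Ba\v sev \cite{Basev61} recalled in Theorem \ref{basevClassificationThm} exhibits a $\PP^1$-parametrised family $V_{2n}(\fp)$ of indecomposables in every even dimension, together with the syzygies of the trivial module, which is the defining property of tame representation type.

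For case (III), the strategy is to exhibit, in each subcase, a surjection of associative algebras $u(\fg) \twoheadrightarrow B$ onto an algebra $B$ of known wild representation type; pullback along the surjection yields a fully faithful exact embedding $\mod B \hookrightarrow \mod u(\fg)$, which transports any representation embedding of $k\langle x,y\rangle$-modules into $\mod B$ up to $\mod u(\fg)$ and forces wildness of $u(\fg)$. The three subcases are: (a) $m \ge 3$, where one quotients by the square of the maximal ideal to obtain the radical-square-zero algebra $k[y_1,\dots,y_m]/(y_iy_j)_{i,j}$ with $m \ge 3$, classically wild; (b) $m = 2$ with $p \ge 3$, quotienting to $k[x,y]/(x^p, y^p)$, wild by the standard pair-of-commuting-matrices construction of indecomposables; (c) $m = 2$, $p = 2$, and $\max\{i_1, i_2\} \ge 2$, quotienting to $k[x,y]/(x^4, y^2)$, a commutative local algebra on the classical wild list. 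The main obstacle will be verifying wildness for the three small commutative local quotients in (a), (b), and (c); each is handled either by an explicit construction of a representation embedding from $k\langle x,y\rangle$-modules or by invoking the classification of representation type for commutative local Artinian $k$-algebras.
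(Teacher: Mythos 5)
Your proof is correct, but it takes a genuinely different route from the paper's. You share the same setup (the reduction $\fg \cong \ft^r \oplus \Nscr(\fg)$ via Seligman, the observation that $u(\ft^r\oplus\Nscr(\fg))$ is a product of $p^r$ copies of $u(\Nscr(\fg))$, and the presentation $u(\Nscr(\fg)) \cong k[x_1,\dots,x_m]/(x_j^{p^{i_j}})$), but from there the paper simply observes that this algebra is the group algebra of a finite abelian $p$-group and quotes the Bondarenko--Drozd dichotomy for modular group algebras: wild iff the ($p$-)group is noncyclic of order $>4$, with the Klein four group the unique tame abelian case. Your argument replaces that single citation by a case analysis: uniseriality in case (I), Ba{\v s}ev's classification in case (II), and in case (III) the standard fact that wildness lifts along surjections $u(\fg)\twoheadrightarrow B$, reducing to three small local algebras ($k[y_1,\dots,y_m]/\fm^2$ with $m\ge 3$, $k[x,y]/(x^p,y^p)$ with $p\ge 3$, and $k[x,y]/(x^4,y^2)$ in characteristic $2$), whose subcases do exhaust case (III). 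What the paper's route buys is brevity and a uniform statement covering all abelian $p$-groups at once; what yours buys is a more self-contained and mechanism-revealing proof, at the cost of still having to certify wildness of the three quotients, which you rightly defer either to explicit representation embeddings of $k\langle x,y\rangle$-modules or to the classical tame/wild lists for commutative local Artinian algebras (note that the last two quotients are themselves the group algebras of $(\ZZ/p)^2$ and $\ZZ/4\times\ZZ/2$, so at that point the two proofs lean on essentially the same classical results).
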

\begin{proof}
    Since $\Nscr(\fg)$ is a Lie subalgebra with enveloping algebra isomorphic as an associative algebra to the group algebra of some finite abelian $p$-group over $k$, we may appeal directly to modular representation theory of finite groups. It has long been known (see Bondarenko and Drozd \cite{BonDrozd82}) that a group is of wild representation type over $k$ if and only if its Sylow $p$-subgroup is not cyclic, with abelianization of order $> 4,$ with the only tame $p$-groups appearing in characteristic $p = 2$. In particular the only abelian $p$-group of tame representation type is the Klein $4$-group, and any noncyclic abelian $p$-group of order $ > 4$ is of wild representation type. 
\end{proof}

\begin{cor}\label{primordial}
Let $\fg$ be an abelian restricted Lie algebra of wild representation type with no nontrivial wild direct summands (for any decomposition $\fg \cong \fg'\oplus \fg'',$ if $\fg'$ is of wild representation type then $\fg'' = 0$).
\begin{enumerate}[I.]
    \item If $p = 2,$ then $\fg = \fn_1 \oplus \fn_1 \oplus \fn_1,$ or $\fg = \fn_n \oplus \fn_m$ for $n + m \ge 3,$ and $n, m \ge 1$.
    \item If $p > 2,$ then $\fg = \fn_n \oplus \fn_m$ for $n, m \ge 1$.
\end{enumerate}
\end{cor}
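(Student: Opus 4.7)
The plan is to apply Seligman's structure theorem (\ref{SeligmanStructure}) to decompose $\fg = \ft^r \oplus \bigoplus_i \fn_i^{s_i}$, then narrow the allowed shapes using the wildness criterion from the preceding theorem together with the primordial hypothesis. First I would rule out torus summands: since $u(\ft) \cong k^p$ is semisimple, as noted at the start of \ref{repTypesforAbLie}, $\fg$ and its nullcone $\Nscr(\fg) = \bigoplus_i \fn_i^{s_i}$ have the same representation type. So if $r \ge 1$, the decomposition $\fg = \Nscr(\fg) \oplus \ft^r$ exhibits a wild summand with nontrivial complement $\ft^r$, violating the primordial hypothesis. Hence $r = 0$ and $\fg = \bigoplus_i \fn_i^{s_i}$ is purely nilpotent.

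Writing $\fg = \fm_1 \oplus \cdots \oplus \fm_N$ with each $\fm_j \cong \fn_{i_j}$ of positive dimension, primordiality translates to the combinatorial condition: for every proper nonempty subset $S \subsetneq \{1, \ldots, N\}$, the subalgebra $\bigoplus_{j \in S} \fm_j$ is not of wild representation type. I would then split into the two characteristic cases and bound $N$.

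For $p > 2$, the preceding theorem makes every direct sum of two or more cyclic summands wild (since $p^n \ge 9 > 4$ as soon as $n \ge 2$). Then $N = 1$ would give cyclic $\fg$ of finite type, contradicting wildness; and $N \ge 3$ would make $\fm_1 \oplus \fm_2$ a wild proper summand, ruled out by primordiality. So $N = 2$, giving $\fg \cong \fn_n \oplus \fn_m$ with $n, m \ge 1$.

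For $p = 2$, wildness additionally requires total nullcone dimension $\ge 3$. The case $N = 1$ is again ruled out as cyclic. The case $N = 2$ gives $\fg \cong \fn_n \oplus \fn_m$, with wildness forcing $n + m \ge 3$; the proper subsets are single cyclic pieces of finite type, so primordiality holds automatically. For $N \ge 3$, applying the primordial hypothesis to every pair $\{\fm_j, \fm_k\}$ forces $i_j + i_k \le 2$ (otherwise $\fn_{i_j} \oplus \fn_{i_k}$ would be wild by the preceding theorem, yielding a proper wild summand), hence every $i_j = 1$; and then $N \ge 4$ fails because any three of the summands already form a wild proper subalgebra $\fn_1^3$. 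This leaves $\fg \cong \fn_1^3$, which is genuinely primordial since its proper summands are the tame Klein-$4$ algebra $\fn_1 \oplus \fn_1$ and the finite-type $\fn_1$. The main delicacy is precisely this last step, relying on the characteristic-$2$ boundary phenomenon that $\fn_1 \oplus \fn_1$ is tame rather than wild; without this exceptional case, $\fn_1^3$ would not qualify as primordial and the classification in characteristic $2$ would collapse to match the uniform $p > 2$ answer.
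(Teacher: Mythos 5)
Your argument is correct and is exactly the deduction the paper intends (and leaves unwritten) for Corollary \ref{primordial}: apply Seligman's decomposition \ref{SeligmanStructure}, discard torus factors since a nonzero $\ft^r$ complement would contradict primordiality of the wild nullcone, and then use the representation-type theorem to bound the number and sizes of the cyclic summands, with the $p=2$ exception $\fn_1^{\oplus 3}$ arising precisely because $\fn_1\oplus\fn_1$ is tame rather than wild. Nothing further is needed.
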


\subsection{No wild abelian Lie algebra satisfies Property PC}
We have proven in Theorem \ref{nPA} that no abelian algebra $\fg$ of dimension $2$ with trivial restriction $\fg^{[p]} = 0$ may satisfy Property PA for $p > 2$. We will show how to extend this result to all wild abelian Lie algebras.  
%We define the \emph{primordial wild abelian algebras} the abelian restricted Lie algebras listed in Corollary \ref{primordial}. 
\begin{prop}\label{primordialProp1}
    Let $\fg$ be an abelian restricted Lie algebra of wild representation type, with no nontrivial wild direct summands as in Corollary \ref{primordial}. If $\fg \not\cong \fn_n \oplus \fn_n$ or if $p \neq 2$, then $\fg$ meets the hypotheses of Lemma \ref{nPALemma} for a nilpotent $t \in \Nscr(\fg)$ of order 1: there exists $t \in \Nscr_1(\fg)$ with $u(\langle t \rangle )\hookrightarrow u(\fg)$ representing $\fp \in \Xscr(A)$, and an isotropy $\varphi \in \Omega(u(\fg), \fp)$ such that $V^{\varphi^{-1}}\downarrow^{\fg}_{\langle t\rangle}$ is not trivial (isomorphic to some $nJ_1$) and not projective (isomorphic to some $nJ_{p}$). 
    Therefore $\fg$ does not satisfy Property PA. 
\end{prop}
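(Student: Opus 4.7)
The plan is to apply Lemma \ref{nPALemma} case by case according to the classification of primordial wild abelian Lie algebras in Corollary \ref{primordial}. Under the hypotheses of Proposition \ref{primordialProp1}, the cases are (a) $\fg = \fn_n \oplus \fn_m$ with $n \ge m \ge 1$ and either $p > 2$ or $n > m$, and (b) $\fg = \fn_1 \oplus \fn_1 \oplus \fn_1$ at $p = 2$. In each case I will produce a $t \in \Nscr_1(\fg)$ (automatically central since $\fg$ is abelian) and an augmented algebra automorphism $\varphi$ of $A = u(\fg)$ lying in $\Omega(A,\fp)$, where $\fp = \fp(t) \in \Xscr(A)$, such that the twisted restriction $V^{\varphi^{-1}}\!\downarrow^{\fg}_{\langle t\rangle}$ of $V = V_{1,\fg}(t) = k\!\uparrow^{\fg}_{\langle t\rangle}$ is neither of the form $rJ_1$ nor $rJ_p$. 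Since any $k[s]/s^p$-module has Jordan blocks of size at most $p$, this rules out $rJ_{p^s}$ for every $s \ge 0$, and Lemma \ref{nPALemma} then delivers the conclusion.

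In case (a), fix coordinates so that $A \cong k[x,y]/(x^{p^n}, y^{p^m})$, take $t = y^{p^{m-1}}$, and $\fp = \fp(t) \in \Xscr(A) \cong \PP^1$. Choose an integer $j$ with $p^{n-m} < j < p^{n-m+1}$; this interval contains an integer precisely because we have excluded $p=2$, $n=m$ (where it degenerates to $(1,2)$). Define $\varphi(x)=x$ and $\varphi(y) = y + x^j$. Then $(y+x^j)^{p^m} = y^{p^m} + x^{jp^m} = 0$ since $jp^m \ge p^n$, so $\varphi$ extends to a well-defined augmented algebra endomorphism, with invertibility following by solving the resulting triangular system (explicitly, $\varphi^{-1}(y) = y - x^j$ in odd characteristic, $y + x^j$ in characteristic $2$). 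The induced module is $V = A/y^{p^{m-1}}A \cong k[x,y]/(x^{p^n}, y^{p^{m-1}})$, and the twisted action of $t$ on $V^{\varphi^{-1}}$ is multiplication by $\varphi(y^{p^{m-1}}) = y^{p^{m-1}} + x^{jp^{m-1}}$, which reduces in $V$ to multiplication by $x^a$ with $a = jp^{m-1}$. The strict bounds $p^{n-1} < a < p^n$ ensure that the Jordan form of multiplication by $x^a$ on $k[x]/x^{p^n}$ --- which is preserved upon tensoring with the identity on $k[y]/y^{p^{m-1}}$ --- has neither all blocks of size $1$ (which requires $a \ge p^n$) nor all blocks of size $p$ (which requires exactly $a = p^{n-1}$). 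The isotropy $\varphi \in \Omega(A,\fp)$ then follows from Lemma \ref{actionOnPiPts}: since $\Xscr(A,V) = \{\fp\}$ (by the centrality of $t$ together with the PBW-type argument used in Corollary \ref{automorphismExtension}), and the computation above shows that $\varphi(t)$ acts non-freely on $V$, the $\pi$-point $\varphi \circ \alpha$ with $\alpha(s) = t$ satisfies $\fp(\varphi \circ \alpha) \in \Xscr(A,V) = \{\fp\}$, forcing equality.

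In case (b), let $A = k[x,y,z]/(x^2,y^2,z^2)$, take $t = z$, $\fp = \fp(z)$, and set $\varphi(x)=x$, $\varphi(y)=y$, $\varphi(z) = z + xy$. The relation $(z + xy)^2 = 0$ is immediate in characteristic $2$, so $\varphi$ is an involution. On $V = A/zA \cong k[x,y]/(x^2,y^2)$, the twisted action of $t$ is multiplication by $xy$, which on the basis $\{1, x, y, xy\}$ has $1$-dimensional image and $3$-dimensional kernel, yielding the Jordan form $J_2 \oplus 2J_1$ --- neither $rJ_1$ nor $rJ_p = rJ_2$. The isotropy step is verified identically to case (a). The point I expect to demand the most care is this isotropy verification: the informal principle that perturbing a $\pi$-point by an element of $\rad(A)^2$ does not change its class in $\Xscr(A)$ is finessed by choosing specifically the induced module $V$, whose singleton support $\{\fp\}$ forces the perturbed $\pi$-point into the same prime once the non-free action has been established. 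The case $\fg \cong \fn_n \oplus \fn_n$ at $p=2$ is precisely the one where the open interval $(p^{n-m}, p^{n-m+1}) = (1, 2)$ contains no integer $j$, and the present construction collapses; this case is deferred to Proposition \ref{primordialProp2}.
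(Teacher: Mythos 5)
Your proposal is correct, and it follows the paper's overall strategy: reduce via Corollary \ref{primordial} to the cases $\fn_1^{\oplus 3}$ (at $p=2$) and $\fn_n\oplus\fn_m$, pick an order-one central $t$ in the smaller cyclic factor, perturb a generator by a higher-order monomial to get an augmented automorphism, check the twisted restriction of the induced module is neither trivial nor free, and feed this into Lemma \ref{nPALemma}. Your case (b) is the paper's first case verbatim (up to renaming the variables). Where you genuinely diverge is in the two-generator case: the paper fixes the perturbation $x\mapsto x+y^2$ of the small-factor generator by the \emph{square} of the large-factor generator and restricts along $x^{p^{n-1}}$, whereas you perturb by $x^j$ with the exponent $j$ chosen adaptively in the open interval $(p^{n-m},p^{n-m+1})$ (in your labeling, $n\ge m$). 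This adaptive choice is exactly what makes the construction work uniformly: the lower bound $j>p^{n-m}$ guarantees $\varphi$ is a well-defined endomorphism (the image of the relation $y^{p^m}$ vanishes), and the strict inequalities $p^{n-1}<jp^{m-1}<p^n$ guarantee the twisted action of $t$ is neither zero nor free; with the fixed exponent $2$ both of these can fail (e.g.\ well-definedness when $2p^{\min(n,m)}<p^{\max(n,m)}$, and freeness of the restriction when $p=2$ and the exponents differ by one), so your version is the more robust one and also absorbs the base case otherwise delegated to Theorem \ref{nPA}. Your isotropy verification is likewise a slightly different route: rather than observing that $\varphi$ fixes linear terms and hence lies in $\Omega(A)$ (the argument used in the proof of Theorem \ref{nPA}), you use Lemma \ref{actionOnPiPts} together with the singleton support $\Xscr(A,V)=\{\fp\}$ and the non-projectivity of the twisted restriction to force $\varphi^*(\fp)=\fp$; this is the same detection-of-projectivity argument the paper deploys later for the Heisenberg algebra, and it is valid here granted the generalized-$\pi$-point conventions of \ref{generalizedPi}. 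Either verification is acceptable; the linear-term one is shorter, yours has the merit of only using facts already needed for Lemma \ref{nPALemma}.
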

\begin{proof}
    By Corollary \ref{primordial} we have three cases to consider. But by Lemma \ref{nPA}, we have already covered the case where $p > 2$ and $n = m = 1$ for $\fn_n \oplus \fn_m$. For the remaining cases we may assume that $n + m \ge 3$ in any characteristic. 

    Let $p = 2$ and assume $\fg = \langle x, y, z \mid x^{[2]} = y^{[2]} = z^{[2]} = 0\rangle,$ so $A = u(\fg) = k[x,y,z]/(x^2, y^2, z^2)$. Define $\fh$ to be the subalgebra $\langle x \rangle$ and define $\varphi \in \Aut(A)$ by \[\varphi(x) = x + yz,\quad  \varphi(y) = y,\quad \varphi(z) = z.\] 
    Then $\varphi \in \Omega(A, \fp)$ where $\fp$ is represented by $u(\fh)\hookrightarrow A,$ and $V^{\varphi^{-1}}$ is not annihilated by $x$. Therefore the restriction $V^{\varphi^{-1}}\downarrow^{\fg}_{\fh}$ is neither trivial nor projective. 

    Now assume $\fg = \fn_n \oplus \fn_{m}$, for $n \ge 1$ and $m \ge \max\{n, 2\}.$ Denote $x_{n+1} = y_{m+1} = 0$ and take bases for cyclic summands
    \[\fn_n = \langle x_1, \dots, x_n \mid x_i^{[p]} = x_{i+1}\rangle, \quad \fn_m = \langle y_1, \dots, y_m \mid y_i^{[p]} = y_{i+1}\rangle,\]
    so 
    $A = u(\fg) = k[x, y] / (x^{p^n}, y^{p^m})$
    for $x = x_1, y = y_1.$ We take $\fh$ to be the subalgebra $\langle x_n = x^{p^{n-1}}\rangle.$
    Now define $\varphi \in \Aut(A)$ by 
    \[\varphi(x) = x + y^2,\quad \varphi(y) = y.\]
    Then $\varphi \in \Omega(A, \fp)$, where $\fp$ is represented by $u(\fh) \hookrightarrow A.$
    We have $\varphi(x_n) = (x + y^2)^{p^{n-1}} = x_n + y^{2p^{n-1}}$. But $2p^{n-1} < p^m$ provided either $m > n$ or $p > 2$ so $V^{\varphi^{-1}}$ is not annihilated by $x_n$. Hence $V^{\varphi^{-1}}\!\!\!\downarrow^{\fg}_{\fh}$ is neither trivial nor projective.    
    
    %Note $\fg$ is \emph{unipotent}, equivalently $u(\fg)$ is a local ring, so we have $\Xscr(A, V) =\{\fp\}.$
\end{proof}
    The remaining case of $p = 2$ and $m = n \ge 2$ has been excluded from the above Proposition, as one finds it is necessary to use a $t \in \Nscr(\fg)$ of order $n$, not simply order 1. Indeed, have $A = u(\fg) = k[x, y]/(x^{2^n}, y^{2^n})$, and any 1-dimensional restricted Lie subalgebra $\fh \subset \fg$ is generated by $t = ax^{2^{n-1}} + by^{2^{n-1}}$ for $a,b$ not both $0$. We may assume $a = 1, b =0$. Any  $\varphi \in A$ fixing the corresponding point $\fp = [1: 0] \in \PP^1,$ must have $\varphi(x) = x + \xi$ for a higher order term $\xi$, in which case $\varphi$ fixes $x^{2^{n-1}} \in A.$ This case is dealt with in the next proposition.

% The Green ring is given by the table
% \begin{table}[h]\centering
% \begin{tabular}{l|cccc}
%       $\widetilde\otimes$& $J_1$ & $J_2$  & $J_3$             & $J_4$  \\\hline
% $J_1$ & $J_1$ & $J_2$  & $J_3$             & $J_4$  \\
% $J_2$ &       & $2J_2$ & $J_2\oplus J_4$   & $2J_4$ \\
% $J_3$ &       &        & $J_1 \oplus 2J_4$ & $3J_4$ \\
% $J_4$ &       &        &                   & $4J_4$
% \end{tabular}.
% \end{table}    

\begin{prop}\label{primordialProp2}
    Let $k$ be an algebraically closed field of characteristic $p = 2$, and let $\fg = \fn_n \oplus \fn_n$ for $n \ge 2$, with $A = u(\fg) = k[x,y] / (x^{2^{n}}, y^{2^{n}}).$ Then $x \in \Nscr_n(\fg)$, so take $\fh = \langle x\rangle$ be the cyclic Lie subalgebra of $\fg$ of dimension $n$. The associated prime $\fp = \fp(x) \in \Xscr(A),$ written in coordinates dual to the basis $x^{2^{n-1}},y^{2^{n-1}}$ for $\Nscr_{1}(\fg),$ is the point $\fp = [1: 0] \in \Xscr(A) = \PP^1.$
    Let $V = k\uparrow_{\fh}^{\fg}$ be the induced module of the trivial $\fh$ module. Then there exists an isotropy $\varphi \in \Omega(A, \fp)$ such that $V^{\varphi^{-1}}$ is not isomorphic to any $nJ_{2^s}$. 
    Therefore $\fg$ does not satisfy Property PA. 
\end{prop}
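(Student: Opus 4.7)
The plan is to invoke Lemma \ref{nPALemma} with the central element $t = x \in \Nscr_n(\fg)$, by exhibiting a single isotropy $\varphi \in \Omega(A, \fp)$ for which $V^{\varphi^{-1}}\!\downarrow^\fg_\fh$ has non-uniform Jordan type. The main obstacle, flagged in the paragraph preceding the proposition, is that the straightforward choice $\varphi(x) = x + y^2$ used in Proposition \ref{primordialProp1} produces here the forbidden uniform shape $2 J_{2^{n-1}}$. The underlying reason is Frobenius in characteristic $2$: for any $\varphi : x \mapsto x + g$ that strictly fixes the representing $\pi$-point $\alpha : t \mapsto x^{2^{n-1}}$, the induced action of $x$ on $V^{\varphi^{-1}}$ is just multiplication by $g$ on $k[y]/y^{2^n}$, and the forbidden uniform shapes $m J_{2^s}$ arise precisely when the $y$-valuation of $g$ is a power of $2$; we therefore need a $g$ of non-dyadic valuation.

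I will take the involution $\varphi : x \mapsto x + y^3,\ y \mapsto y$. Since $3 \cdot 2^{n-1} \ge 2^n$ for $n \ge 2$, the Frobenius identity gives $\varphi(x)^{2^{n-1}} = x^{2^{n-1}} + y^{3 \cdot 2^{n-1}} = x^{2^{n-1}}$ in $A$, so $\varphi \circ \alpha = \alpha$ as $\pi$-points and therefore $\varphi \in \Omega(A, \fp)$. That $\varphi$ is an involutive augmented algebra automorphism is routine from $\varphi(x)^{2^n} = y^{3 \cdot 2^n} = 0$ together with characteristic $2$.

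The decisive step is to compute the restriction to $u(\fh) = k[x]/x^{2^n}$. Since $V = A/(x) \cong k[y]/y^{2^n}$ as a $k$-space with $x$ acting as $0$, the twisted action sends $x$ on $V^{\varphi^{-1}}$ to multiplication by $\varphi(x) = y^3$. The Jordan chains of $y^3 \cdot$ on $k[y]/y^{2^n}$ partition the basis $\{1, y, \ldots, y^{2^n - 1}\}$ into three orbits indexed by residue modulo $3$, and since $2^n \not\equiv 0 \pmod 3$ these orbits have different sizes: of type $(q+1, q, q)$ for $n$ even with $q = (2^n - 1)/3$, and $(q+1, q+1, q)$ for $n$ odd with $q = (2^n - 2)/3$, so $q \ge 1$ throughout $n \ge 2$. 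This heterogeneous shape is not of the form $m J_{2^s}$ for any $m, s$, so the hypotheses of Lemma \ref{nPALemma} are met and $\fg$ fails Property PA.
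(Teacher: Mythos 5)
Your proof is correct, and it follows the same overall strategy as the paper's: twist by an augmented automorphism of the form $x \mapsto x + y^j$, $y \mapsto y$ fixing the representing $\pi$-point $t \mapsto x^{2^{n-1}}$, note that $x$ then acts on $V^{\varphi^{-1}}$ as multiplication by $y^j$ on $k[y]/y^{2^n}$, check that this operator's Jordan type is not of the form $mJ_{2^s}$, and invoke Lemma \ref{nPALemma}. The difference is the exponent: the paper takes $j = 2^{n-1}-1$, you take $j = 3$, and your choice is the more robust one. For $n = 2$ the paper's perturbation is $y$, and $x \mapsto x + y$ is not an isotropy of $\fp = [1:0]$ (it sends the representing $\pi$-point $t \mapsto x^{2}$ to $t \mapsto x^{2} + y^{2}$, i.e.\ moves $[1:0]$ to $[1:1]$), and its restriction would in any case be a single $J_4$; so the paper's argument as written really only covers $n \ge 3$, whereas your condition $3\cdot 2^{n-1} \ge 2^n$ holds for every $n \ge 2$. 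Moreover, the Jordan decomposition the paper asserts for its own choice (two $J_2$'s and the rest $J_1$'s) is miscomputed: since $y^{2^n-1} \ne 0$, the chains through $1$ and through $y$ under multiplication by $y^{2^{n-1}-1}$ have length $3$, and the true type is $2J_3 \oplus (2^{n-1}-3)J_2$ for $n \ge 3$ — still non-uniform, so the paper's conclusion survives there, but your chain-length computation $(q+1,q,q)$ resp.\ $(q+1,q+1,q)$ is the one that is actually correct, and it settles the proposition uniformly for all $n \ge 2$, including the $n=2$ case the paper's choice misses. (Your motivating remark that uniform power-of-two shapes occur exactly when the $y$-valuation of the perturbation is a power of $2$ is accurate, though not needed for the argument.)
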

\begin{proof}
    Define an automorphism $\varphi \in \Aut(A)$ by 
    \[\varphi(x) = x + y^{2^{n-1} - 1}, \quad \varphi(y) = y.\]
    Then $\varphi \in \Omega(A, \fp).$ 
    Now we examine the representation $V^{\varphi^{-1}} \downarrow_{\fh}^\fg,$ which has its decomposition into Jordan blocks determined by the action of $x \in A$ on $V^{\varphi^{-1}}$. 
    The action of $x$ on $V^{\varphi^{-1}}$ is a matrix agreeing with the action of $y^{2^{n-1} - 1}$ on $V,$ the induced module. 
    The module $V = u(\fg) \otimes_{u(\fh)} \otimes k$ has a $k$-linear basis of elements $y^i \otimes 1,$ for $0 \le i \le 2^{n-1}.$
    The Jordan decomposition of $y^{2^{n-1}-1}$ consists of two blocks isomorphic to $J_2,$ with $k$-linear bases $\{y^{2^{n-1}}\otimes 1, y\otimes 1\}$ and $\{y^{2^{n-1}-1} \otimes 1, 1\otimes 1\}$. The other blocks are all isomorphic to $J_1.$ In particular not all the blocks are of the same size so we are done. 
\end{proof}

\subsection{A family of nonabelian wild Lie algebras}
Assume for this section that $p > 2.$ Let $\fg_n$ be the \emph{Heisenberg Lie algebra} of dimension $2n + 1$, having presentation 
\[\fg_n = \langle x_i, y_i, z; 1\le i \le n \mid [x_i, y_j]  = \delta_{ij}z,\ [z,x_i] = [z, y_i] = 0,\ x_i^{[p]} = y_i^{[p]} = z^{[p]} = 0\rangle.\]

We have canonical embeddings $\fg_n \subset \fg_{n+1}$ of Lie algebras by keeping the indexing of $x_i, y_i$ the same. Assuming $p > 2$, we have that each $\fg_n$ is of wild representation type. In this section we will first show that $\fg_1$ does not satisfy Property PA by an argument of polynomial identities {\`a} la Lemma \ref{nPALemma}, and then that this can be extended to any $\fg_n$ via Lemma \ref{generalAutomorphismExtension} (note each subalgebra $\fg_n \subset \fg_{n+1}$ is central).

Since $[[\fg_n, \fg_n], \fg_n] = 0$ and a basis for $\fg_n$ is annihilated by the $[p]$ restriction mapping, it follows that $\fg_n^{[p]} = 0.$ Thus $\Nscr_1(\fg_n) = \fg_n,$ and we may identify $\Xscr(\fg_n)$ with $\PP(\fg_n),$ i.e. the variety $\PP^{2n}$ of $1$-dimensional subspaces of $\fg_n$.

\begin{expo}\label{PBWsupport}
For abelian restricted Lie algebras $\fg$ and their corresponding infinitesimal group schemes $\widetilde G$, the equivalence relation on general $\pi$-points is straightforward. On one hand the structure of cohomology is easier to deal with, using well-known constructions for minimal resolutions. On the other hand even our note \ref{noteOnPiEquivalence} is easier to apply in the case of abelian Lie algebras: the induced module $V = k\uparrow_{\langle t \rangle}^{\fg}$ from a subgroup $\langle t \rangle \in \PP(\Nscr_1(\fg))$ is easily shown to be supported only at the corresponding point $\fp(t) \in \Xscr(\widetilde G)$, by restricting along each subalgebra in $\PP^1(\Nscr_1(\fg))$.
The equivalence class of $\pi$-points corresponding to $\fp ( t )$ is therefore 
$\{ \alpha \mid \alpha^*(V) \text{ is not projective}\}.$ 

The latter approach is adaptable to the following: let $\fg$ be any finite dimensional restricted Lie algebra and $A = u(\fg)$. 
Let $\fp = \fp(t)$ for some nonzero $t \in \Nscr(\fg)$, and let $V = k\uparrow^{\fg}_{\langle t \rangle}$ be the induced module. Say $t$ is of order $r$ and so $\langle t \rangle $ is $r$-dimensional. Identifying $\Xscr(A) = \PP(\Nscr_1(\fg))$ by Friedlander and Parshall \cite{FrPar86}, we want to show that $\Xscr(A, V) \subset \{\fp\}.$
Without loss of generality assume $\{\fp\} \subsetneq \Xscr(A)$, and let $\fq = \fp(s) \in \Xscr(A)$ be distinct from $\fp$, for some nonzero $s \in \Nscr_1(\fg)$. In particular $s, t, t^{[p]}, \dots t^{[p]^{r-1}}$ is a linearly independent set. Extend this to an ordered basis
\[s_1< \dots <  s_n\]
for $\fg$, by assuming $s = s_1$ and $s_{n - i} = t^{[p]^i}$ for $0\le i \le r-1.$ By the PBW theorem, $u(\fg)$ has a $k$-basis of ordered monomials in the coordinates $s_j$, $1 \le j \le n$. Therefore, for the trivial $\langle t \rangle$-module $k$, the induced module \[V = k\uparrow^{\fg}_{\langle t \rangle} = u(\fg) \otimes_{u(\langle t \rangle )} k\]
has a $k$-basis of simple tensors $s^{\alpha}\otimes 1$, for ordered monomials $s^{\alpha}$ in the coordinates $s_j$ for $1 \le j \le n - r.$ Hence $V\downarrow_{\langle s \rangle}$ is free over $u(\langle s \rangle) = k[s]/s^p,$
since
\[s \cdot s_1^{\ell}s^\beta\otimes 1 = s_1^{\ell + 1} s^{\beta}\otimes 1\]
for any ordered monomial $s^{\beta}$ in the coordinates $s_j$ for $2 \le j \le n - r.$ We conclude $\fq \not\in \Xscr(A, V).$

If $V$ is known to be not projective over $u(\fg)$, then we know in fact that $\Xscr(A, V) = \{\fp\},$ as the support must be nonempty. Recall the algebra $\fg$ is called \emph{unipotent} if $u(\fg)$ is a local ring (having a unique maximal left ideal). Whenever $\fg$ is unipotent, the induced module $V$ can not be projective because $V$ has dimension $p^{n - r},$ which is strictly smaller than the rank 1 free module, the smallest projective. If $\fg = \fg' \oplus \mathfrak{u}$ for some unipotent $\mathfrak{u}$, and $t \in \Nscr(\mathfrak{u})$, then the induced module $V$ also can not be projective. Thus if $\fg$ is any abelian restricted Lie algebra, we have another proof that $\Xscr(A, V) =\{\fp\}$ using Seligman's structure theorem, since any sum of $p$-nilpotent cyclic Lie algebras is unipotent. But we have in fact done enough work to compute when two $\pi$-points are equivalent in some important nonabelian cases as well, without resorting to resolutions!
\end{expo}

\begin{expo}\label{indForHeis}(Induced modules for $\fg_1$)
Here we will give matrices describing the induced modules $V_r = V_{1, \fg}(x)$ for $\fg = \fg_1$, where $x = x_1 \in \fg$ and $J_r$ denotes the unique indecomposable $\langle x \rangle$-representation of dimension $r$. We also denote $y = y_1, A = u(\fg)$, and $D = u(\langle x \rangle) \cong k[x]/x^p$. Now we have $J_r \cong D / Dx^r$, where $Dx^r$ is the left-ideal, and hence $V_r \cong A / Ax^r.$ We choose $y < z < x$ as an ordered basis for $\fg$, so ordered monomials $y^iz^jx^\ell,\ \ 0\le i, j, \ell \le p-1,$ are a basis for $A$. The action of $x$ on $A$ is given by 
\[x \cdot y^iz^jx^\ell  = y^iz^jx^{\ell + 1} + iy^{i-1}z^{j+1}x^{\ell}.\]

Now we'll give matrices for the action of $x$ on $V_r,$ to find the $D$-module structure of each $V_r\downarrow^A_{D}.$ We put a lexicographical order on the basis of representing monomials $y^i z^j x^\ell,\quad  0 \le i, j \le p-1,\ \ 0 \le \ell \le r-1,$ 
for $V_r \cong A / Ax^r$ by the following relations:
\begin{flalign*}
    y^{i}z^{j}x^{\ell} < y^{i}z^{j'}x^{\ell} \quad &\iff\quad j > j' \qquad \forall i, \ell,\\
    y^{i}z^{j}x^{\ell} < y^{i'}z^{j'}x^{\ell}\quad &\iff \quad i < i' \qquad \forall j, j',\ell\\
    y^{i}z^{j}x^{\ell} < y^{i'}z^{j'}x^{\ell'}\quad &\iff \quad \ell > \ell' \qquad \forall i, i', j, j'.
\end{flalign*}
Recall our notation $\mathfrak{N}_p$ for the $p\times p$ upper triangular nilpotent Jordan block of rank $p-1$. Now we define the block matrix $\mathfrak{M}$, with $p \times p$ blocks, each of size $p\times p$
% \[\mathfrak{M} = \begin{pmatrix}
%         0 & \mathfrak{N}_p & 0 &\cdots & 0\\
%         0 & 0 &2\mathfrak{N}_p & \cdots & 0\\
%         \vdots & \vdots & \vdots & \ddots & \vdots\\
%         0 & 0 & 0 &0 & (p-1)\mathfrak{N}_p\\
%         0 & 0 & 0& 0 & 0
% \end{pmatrix}\]
\begin{center}
\begin{tikzpicture}
    % Matrix label "M = "
    \node at (-3, 0) {$\fM =$};
    
    % Define the matrix with delimiters and arrow
    \matrix (m) [matrix of math nodes, left delimiter={(}, right delimiter={)}] {
        0 & \mathfrak{N}_p & 0 &\cdots & 0\\
        0 & 0 &2\mathfrak{N}_p & \cdots & 0\\
        \vdots & \vdots & \vdots & \ddots & \vdots\\
        0 & 0 & 0 &0 & (p-1)\mathfrak{N}_p\\
        0 & 0 & 0& 0 & 0\\
    };

    % Draw an arrow above the matrix with label "i"
    \draw[->] ([yshift=6pt] m-1-1.north west) -- ([yshift=6pt] m-1-5.north east) 
        node[midway,above] {$i$};
\end{tikzpicture}
\end{center}

Then with respect to our ordered basis of monomials, the matrix representing the action of $x$ on $V_r$ is given by the upper triangular block matrix $\mathfrak{L}_r$, with $r\times r$ blocks, each of size $p^2 \times p^2$
% \[\mathfrak{L}_r = \begin{pmatrix}
%     \mathfrak{M} & I_{p^2} & 0 &\cdots & 0\\
%     0 & \mathfrak{M} & I_{p^2} & \cdots &0\\
%     \vdots & \vdots & \vdots & \ddots & \vdots\\
%     0 & 0 & 0 & \mathfrak{M} & I_{p^2}\\
%     0 & 0 & 0 & 0 & \mathfrak{M}
% \end{pmatrix},\]

\begin{center}
\begin{tikzpicture}
    % Matrix label "M = "
    \node at (-2.5, 0) {$\fL_r =$};
    
    % Define the matrix with delimiters and arrow
    \matrix (m) [matrix of math nodes, left delimiter={(}, right delimiter={)}] {
    \mathfrak{M} & I_{p^2} & 0 &\cdots & 0\\
    0 & \mathfrak{M} & I_{p^2} & \cdots &0\\
    \vdots & \vdots & \vdots & \ddots & \vdots\\
    0 & 0 & 0 & \mathfrak{M} & I_{p^2}\\
    0 & 0 & 0 & 0 & \mathfrak{M}\\
    };

    % Draw an arrow above the matrix with label "i"
    \draw[->] ([xshift = 10pt,yshift=6pt] m-1-5.north west) -- ([xshift = -10pt, yshift=6pt] m-1-1.north east) 
        node[midway,above] {$\ell$};
\end{tikzpicture}
\end{center}
where $I_{p^2}$ is the identity matrix. Column positions of $\fL_r$ correspond to a set of monomials $y^iz^jx^\ell$ for fixed $\ell,$ increasing to the left. Within $\ell$th column, each column (of e.g. $\fM$ or $I_{p^2}$) is a fixed $i$, increasing to the right. 

In Proposition \ref{HeisenbergnPA}, we will also need the matrix $\mathfrak{O}_r$ representing the action of $(yz)^{p-1}\in A$ on $V_r.$
First define $\mathfrak{E}$ to be the $p\times p$ matrix
% \[\mathfrak{E} = \begin{pmatrix}
%             1 & 0 & \cdots & 0\\
%             0 & 0  & \cdots & 0\\
%             \vdots & \vdots & \ddots & \vdots\\
%             0 & 0 & \cdots & 0
% \end{pmatrix}\]
\begin{center}
\begin{tikzpicture}
    % Matrix label "M = "
    \node at (-1.7, 0) {$\fE =$};
    
    % Define the matrix with delimiters and arrow
    \matrix (m) [matrix of math nodes, left delimiter={(}, right delimiter={)}] {
    0 & \cdots & 0 & 1\\
    \vdots & \ddots & \vdots & \vdots\\
    0 & \cdots & 0 & 0\\
    0 & \cdots & 0 & 0\\
    };

    % Draw an arrow above the matrix with label "i"
    \draw[->] ([xshift = 10pt,yshift=6pt] m-1-4.north west) -- ([xshift = -10pt, yshift=6pt] m-1-1.north east) 
        node[midway,above] {$j$};
\end{tikzpicture},
\end{center}
and define the block matrix $\mathfrak{F}$, with $p\times p $ blocks, each of size $p \times p$, by
% \[\mathfrak{F}  = \begin{pmatrix}
%     0 & \cdots & 0 & \mathfrak{E}\\
%     \vdots & \ddots & \vdots & \vdots\\
%     0 & \cdots & 0 & 0\\
%     0 & \cdots & 0 & 0
% \end{pmatrix}.\]
\begin{center}
\begin{tikzpicture}
    % Matrix label "M = "
    \node at (-1.7, 0) {$\fF =$};
    
    % Define the matrix with delimiters and arrow
    \matrix (m) [matrix of math nodes, left delimiter={(}, right delimiter={)}] {
            0 & 0 & \cdots & 0\\
            0 & 0  & \cdots & 0\\
            \vdots & \vdots & \ddots & \vdots\\
            \mathfrak{E} & 0 & \cdots & 0\\
    };
    % Draw an arrow above the matrix with label "i"
    \draw[->] ([yshift=6pt] m-1-1.north west) -- ([yshift=6pt] m-1-4.north east) 
        node[midway,above] {$i$};
\end{tikzpicture}.
\end{center}
Now we have $\mathfrak{O}_r$ is a block matrix, with $r \times r$ blocks, each of size $p^2 \times p^2,$ given by
% \[\mathfrak{O}_r = \begin{pmatrix}
%             \mathfrak{F}_r & 0 & \cdots & 0\\
%             0 & \mathfrak{F}_r  & \cdots & 0\\
%             \vdots & \vdots & \ddots & \vdots\\
%             0 & 0 & \cdots & \mathfrak{F}_r
% \end{pmatrix}
% \]
\begin{center}
\begin{tikzpicture}
    % Matrix label "M = "
    \node at (-2, 0) {$\fO_r =$};
    
    % Define the matrix with delimiters and arrow
    \matrix (m) [matrix of math nodes, left delimiter={(}, right delimiter={)}] {
            \mathfrak{F} & 0 & \cdots & 0\\
            0 & \mathfrak{F}  & \cdots & 0\\
            \vdots & \vdots & \ddots & \vdots\\
            0 & 0 & \cdots & \mathfrak{F}\\
    };

    % Draw an arrow above the matrix with label "i"
    \draw[->] ([xshift = 10pt,yshift=6pt] m-1-4.north west) -- ([xshift = -10pt, yshift=6pt] m-1-1.north east) 
        node[midway,above] {$\ell$};
\end{tikzpicture}.
\end{center}
\end{expo}

\begin{prop}\label{HeisenbergnPA}
    Let $\fg = \fg_1$ be the Heisenberg Lie algebra of dimension 3. Let $A =u(\fg)$. There exists $t \in \Nscr(\fg)$ (of order 1), such that, for $\fp = \fp(t) \in \Xscr(A),$
    there exists an isotropy $\varphi \in \Omega(A, \fp)$, for which $V \widetilde \otimes V \not\cong V\widetilde\otimes^{\varphi} V$, where $V = V_{1, \fg}(t)$. 
    Further, $\fg$ is unipotent and therefore $\Xscr(A, V) = \{\fp\}$. So $\fg$ does not satisfy Property PA.
\end{prop}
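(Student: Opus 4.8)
The plan is to follow the template of Lemma~\ref{nPALemma} (and its proof of Theorem~\ref{nPA}): exhibit a central nilpotent $t\in\Nscr(\fg_1)$ of order~1, a module $V=V_{1,\fg}(t)$ satisfying the polynomial identity $V\widetilde\otimes V\cong nV$ by Frobenius reciprocity, and an isotropy $\varphi\in\Omega(A,\fp(t))$ so that $V^{\varphi^{-1}}\!\!\downarrow^{\fg}_{\langle t\rangle}$ fails the restriction identity, i.e.\ is not isomorphic to any $nJ_{p^s}$. Concretely I would take $t=x=x_1$, so $\langle t\rangle=\langle x\rangle$ is one--dimensional, $V=V_1=A/Ax$ is the induced module $k\uparrow^{\fg_1}_{\langle x\rangle}$, and $\fp=\fp(x)=[1:0]$ in suitable coordinates on $\Xscr(A)=\PP(\fg_1)=\PP^2$. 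Because $\fg_1$ is unipotent (so $A$ is local), $\Xscr(A,V)=\{\fp\}$ by the PBW argument in \ref{PBWsupport}, and $\fp$ is noble for $\widetilde G$ by construction and for $\widetilde G^\varphi$ by Lemma~\ref{twisting} once $\varphi$ is shown to lie in $\Omega(A,\fp)$. The whole statement then follows from Lemma~\ref{nPALemma}.

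The first real step is to choose $\varphi$. Following \ref{indForHeis}, the action of $x$ on $V^{\varphi^{-1}}$ is the matrix for the action of $\varphi(x)$ on $V=V_1$, and $V_1$ has $k$-basis the monomials $y^iz^j\otimes 1$, $0\le i,j\le p-1$, with $x\cdot y^iz^j = iy^{i-1}z^{j+1}$ (the $x^{\ell+1}$-term dies in $A/Ax$). So $\varphi(x)=x+\xi$ for a higher-order $\xi$ gives, on $V_1$, the nilpotent operator obtained from the action of $\xi$. To keep $\varphi\in\Omega(A,\fp)$ I need $\varphi$ to fix the point $[1:0]$, equivalently to preserve the linear term $x$ modulo the ideal $(\text{quadratics})$; any $\xi\in(x,y,z)^2$ works for that, and I also need $\varphi$ to be a genuine augmented algebra automorphism of $A=u(\fg_1)$, which constrains $\xi$ (it must respect $[x,y]=z$, $x^{[p]}=y^{[p]}=z^{[p]}=0$). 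The natural candidate, parallel to the abelian case, is $\varphi:x\mapsto x+(yz)^{p-1}$ (up to a scalar), $y\mapsto y$, $z\mapsto z$ — note $(yz)^{p-1}$ is central and $p$-nilpotent, so this is an algebra automorphism fixing $\fp$. Then the action of $x$ on $V^{\varphi^{-1}}$ is represented by $\fL_1$ (the $x$-part, which is $\fM$) plus $\fO_1=\fF$ (the $(yz)^{p-1}$-part) from \ref{indForHeis}; I would compute the Jordan type of $\fM+\fF$ on the $p^2$-dimensional space $V_1$.

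The main obstacle — and the only substantive computation — is verifying that the Jordan type of $\fM+\fF$ on $V_1$ is \emph{not} of the form $nJ_{p^s}$, i.e.\ not all blocks have equal $p$-power size. Since $\dim V_1=p^2$, the ``bad'' (identity-preserving) possibilities are exactly $p^2 J_1$ (impossible, as $\fM\ne0$ already), $pJ_p$, and $J_{p^2}$ (impossible once we see more than one block or a block of size $<p^2$). So it suffices to show $\fM+\fF$ has at least two distinct block sizes, e.g.\ at least one block of size $\neq p$. I expect this to come out by a direct rank computation: $\fM$ alone has Jordan type $(J_p)^{p}$ on the $i$-graded pieces, and adding the rank-one-per-block perturbation $\fF$ (which moves the top $i=p-1$, $j=p-1$ vector back to $i=0$, $j=0$) should fuse some blocks while leaving a $J_1$ fixed by both $\fM$ and $\fF$, producing an uneven partition — concretely, the vector $y^{p-1}\otimes 1$ (annihilated by $x$) and the vector $1\otimes 1$ (annihilated by $x$ but hit by $(yz)^{p-1}$) behave differently, forcing mixed block sizes. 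I would make this precise by exhibiting $\dim\ker(\fM+\fF)$ and $\dim\ker(\fM+\fF)^2$ and checking these are incompatible with $pJ_p$ (which would need $\dim\ker=p$, $\dim\ker^2=2p$); any single explicit small case ($p=3$) plus the visible pattern suffices, but the clean argument is the kernel-dimension count valid for all odd $p$. Once the Jordan type is seen to be non-homogeneous, Lemma~\ref{nPALemma} delivers $V\widetilde\otimes V\not\cong V\widetilde\otimes^\varphi V$, hence $\fg_1$ fails Property PA, completing the proof. \qed
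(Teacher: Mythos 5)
Your choice of $t=x_1$, of the automorphism $\varphi:x\mapsto x+(yz)^{p-1}$, $y\mapsto y$, $z\mapsto z$, and your treatment of the support condition and of $\varphi\in\Omega(A,\fp)$ all match the paper. But the core reduction has a genuine gap: you plan to "exhibit a central nilpotent $t$" and then invoke Lemma \ref{nPALemma}, yet $x_1$ is \emph{not} central in the Heisenberg algebra ($[x_1,y_1]=z\neq 0$), so that lemma does not apply, and the polynomial identity you rely on, $V\widetilde\otimes V\cong nV$, is false here. By Frobenius reciprocity $V\widetilde\otimes V\cong\bigl(V\!\downarrow^{\fg}_{\langle x\rangle}\bigr)\!\uparrow^{\fg}_{\langle x\rangle}$, and the restriction is not trivial: as you yourself note, $x\cdot y^iz^j=iy^{i-1}z^{j+1}$ on $V=A/Ax$, and in fact $V\!\downarrow^{\fg}_{\langle x\rangle}\cong 2J_1\oplus 2J_2\oplus\cdots\oplus 2J_{p-1}\oplus J_p$, so the correct (untwisted) identity is $V_1\widetilde\otimes V_1\cong 2V_1\oplus 2V_2\oplus\cdots\oplus 2V_{p-1}\oplus V_p$, not $p^2V_1$.

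Because of this, your proposed final computation is insufficient: showing that the Jordan type of $\fM+\fF$ (the action of $\varphi(x)$ on $V$) is not of the homogeneous form $nJ_{p^s}$ does not yield $V\widetilde\otimes V\not\cong V\widetilde\otimes^{\varphi}V$, since the untwisted module never satisfied $V\widetilde\otimes V\cong nV$ to begin with. What is actually needed (and what the paper does, via the "CGM polynomial identity" formalism of \ref{MackeyPoly} and \ref{polyIds}) is to compare \emph{both} sides of the twisted identity after restriction to $\langle x\rangle$: one must compute enough of the Jordan data of all the restrictions $L_r=V_r\!\downarrow^A_B$ for $r=1,\dots,p$ (via the matrices $\fL_r+\fO_r$), determine the Jordan type of $L_1$ (which turns out to be $3J_2\oplus 2J_3\oplus\cdots\oplus 2J_{p-1}\oplus J_p$), and show that the number of $J_1$ summands in $L_1\widetilde\otimes L_1$ differs from that in $2L_1\oplus 2L_2\oplus\cdots\oplus 2L_{p-1}\oplus L_p$. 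That multiplicity count over all $r$ is the bulk of the proof and is absent from your plan. (To salvage your shorter route you would need a genuinely central $t$, i.e.\ $t=z$, together with an isotropy making $\varphi(z)$ act non-trivially on $k\!\uparrow^{\fg}_{\langle z\rangle}$, which is a different and unverified construction.)
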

\begin{proof}
    Denote $x = x_1, y = y_1$ and so we have 
    \[A = \frac{k\langle x, y, z\rangle}{([x,y] - z, [z,x], [z, y], x^p, y^p, z^p)}.\]
    Define an automorphism $\varphi \in \Aut(A)$ by 
    \[\varphi(x) = x + (yz)^{p-1}; \quad \varphi(y) = y.\]
    Note that $\varphi(z) = z$, and the inverse is given by 
    \[\varphi^{-1}(x) = x - (yz)^{p-1}; \quad \varphi^{-1}(y) = y.\]
    We will take $t = x \in \Nscr_1(\fg)$, and argue that $\varphi \in \Omega(A, \fp)$, for $\fp = \fp(x)$ by following the arguments \ref{noteOnPiEquivalence} and \ref{PBWsupport}. So let $V = k\uparrow_{\langle x \rangle}^\fg$ be the induced module. Then $\Xscr(A, V) = \{\fp\}$ (since $\fg$ is unipotent) and so $\varphi \in \Omega(A, \fp)$ if and only if the restriction $V\!\downarrow_B^A$, to the image 
    subalgebra $B = \varphi(u(\langle x \rangle ))$, is not a projective module. 

    By the PBW theorem using the ordered basis $y < z < x$ for $\fg$, we get the following basis for $V = A \otimes_{u(\langle x \rangle) } k$ 
    \[\{y^iz^j\otimes 1 \mid 1 \le i,j, \le p-1\}.\]
    The subspace $\langle 1 \otimes 1, (yz)^{p-1} \otimes 1\rangle$ is a Jordan block of size 2 for the element $(yz)^{p-1} \in A$ acting on $V$. One checks that $x \in A$ annihilates both $1 \otimes 1$ and $(yz)^{p-1}\otimes 1$, and neither are in the image of $x$. It follows that $V\downarrow^{A}_B$ has $J_2$ as a direct summand, as the action of $\varphi(x) = x + (yz)^{p-1}$ contains a Jordan block of size 2. We conclude $V\downarrow^{A}_B$ is not projective and hence $\varphi \in \Omega(A, \fp).$

    Now, we want to argue that $V \widetilde\otimes^{\varphi} V \not\cong V \widetilde \otimes V.$ Frobenius reciprocity (\ref{IndTensor}) gives us
    \[V \widetilde \otimes V \cong V\downarrow_{\langle x \rangle}^{\fg} \uparrow_{\langle x \rangle}^{\fg}.\]
    Recall our notation $J_i$ to mean the unique indecomposable $\langle x \rangle$-module of dimension $i$ for $1 \le i \le p$. We can compute directly with the basis $\{y^iz^i \otimes 1\}$ for $V$ that we have a Mackey decomposition \begin{equation}\label{HeisPolyId}V\downarrow^{\fg}_{\langle x \rangle} \cong 2J_1 \oplus 2J_2 \oplus \dots \oplus 2J_{p-1} \oplus J_p,
    \end{equation}and therefore a polynomial identity
    \[V_1\widetilde\otimes V_1 \cong 2V_1 \oplus 2V_2 \oplus \dots \oplus 2V_{p-1} \oplus V_p,\]
    the $1^{st}$ CGM polynomial identity for $\fg$ at $x$,
    where $V_i = V_{i, \fg}(x)$ (note $V = V_1$ and $V_p$ is the free $A$-module of rank $1$). All we must do now is verify that the same polynomial idenity fails after twisting by $\varphi^{-1}$, i.e. that 
    \begin{equation}\label{HeisNonIso}
    V_1^{\varphi^{-1}}\widetilde \otimes V_1^{\varphi^{-1}} \not\cong 2V_1^{\varphi^{-1}} \oplus 2V_2^{\varphi^{-1}} \oplus \dots \oplus 2V_{p-1}^{\varphi^{-1}} \oplus V_p^{\varphi^{-1}}.\end{equation}
    This will follow from seeing that the number of $J_1$ blocks in $(V_1^{\varphi^{-1}} \widetilde\otimes V_1^{\varphi^{-1}})\downarrow_{\langle x \rangle }^{\fg}$ differs from that of the restriction of the right hand side. 
    By restriction property for $\widetilde\otimes$, we have
\begin{flalign*}
    (V_1^{\varphi^{-1}} \widetilde\otimes V_1^{\varphi^{-1}})\downarrow_{\langle x \rangle }^{\fg} &\cong V_1^{\varphi^{-1}}\downarrow_{\langle x \rangle }^{\fg} \widetilde \otimes V_1^{\varphi^{-1}}\downarrow_{\langle x \rangle }^{\fg}\\
    &= V_1 \downarrow^A_B \widetilde \otimes V_1\downarrow^{A}_B.
\end{flalign*}
    Here we use $\widetilde \otimes$ to also denote the tensor product of $\langle x \rangle$-representations. We have also abused notation to assert that $V_1^{\varphi^{-1}}\downarrow_{\langle x \rangle }^{\fg} = V_1 \downarrow^A_B$, identifying $B$ modules as $\langle x \rangle$-representations using the fixed isomorphism $\varphi : u(\langle x \rangle ) \to B.$ 
    
    Let $L_1 = V_1\downarrow^{A}_{B}$. We compute the number of $J_1$ summands in $L_1\widetilde\otimes L_1$ directly as follows: The matrix representing the action of $x$ on $L_1$ is given by $\mathfrak{L}_1 + \mathfrak{O}_1$, as defined in \ref{indForHeis}. The Jordan canonical form of $\mathfrak{L}_1 + \mathfrak{O}_1$ will then tell us the number of summands $J_r$ for each $1 \le r \le p$, within $L_1,$ say
    \[L_1 \cong \sum_{r=1}^{p}c_r J_r.\]
    Then the number of $J_1$-blocks in $L_1\widetilde\otimes L_1$ is $\sum_{r = 1}^{p-1}c_r^2,$ a standard computation for $\langle x \rangle$-representations. 

    Note now, in $\ref{indForHeis}$, for $r = 1$, we have $\fL_1 = \fM$ and $\fO_1 = \fF$, so we have 
    \[\fL_1 + \fO_1 = \fM + \fF = \begin{pmatrix}
                0 & \mathfrak{N}_p & 0 &\cdots & 0\\
        0 & 0 &2\mathfrak{N}_p & \cdots & 0\\
        \vdots & \vdots & \vdots & \ddots & \vdots\\
        0 & 0 & 0 &0 & (p-1)\mathfrak{N}_p\\
        \mathfrak{E} & 0 & 0& 0 & 0
    \end{pmatrix}.\]
    Notice $(\fM + \fF)^n = \fM^n$ for each power $n \ge 2$. The rank $\rho_n$ of $(\fM + \fF)^n$ is therefore
    \[\rho_1 = (p-1)^2 + 1;\quad \rho_n = (p - n)^2,\ \ n \ge 2.\]
    We conclude $L_1 \cong 3J_2\oplus 2J_3 \oplus \dots \oplus 2J_{p-1} \oplus J_{p},$ and so $L_1\widetilde \otimes L_1$ has $9 + 4(p-3)$ many $J_1$ summands. 

    On the other side of the polynomial identity, we want to find the number of $J_1$ summands in each $L_r = V_r\downarrow_{B}^A,$ for the remaining values $r >1$. The action of $x$ on $L_r$ is given by the matrix $\fL_r + \fO_r$. So the number of $J_1$ summands is the nullity of $\fL_r + \fO_r$, minus the number of its Jordan blocks of size $>1.$ 
    
    We find $\fL_r + \fO_r$ is given by the $r\times r$ block matrix, with blocks of size $p^2\times p^2$
    \[\fL_r + \fO_r = \begin{pmatrix} 
    \fM + \fF & I_{p^2} & 0 & \cdots & 0\\
    0 &\fM + \fF & I_{p^2} & \cdots & 0\\
    \vdots & \vdots & \vdots &\ddots & \vdots\\
    0 & 0 & 0& \fM + \fF & I_{p^2}\\
    0 & 0 & 0 & 0 & \fM + \fF
    \end{pmatrix}. 
    \]
    % This matrix is of rank $r(1 + (p-1)^2).$
    Noting again $(\fM + \fF)^2 = \fM^2$, we have $(\fL_r + \fO_{r})^2$ is the block matrix
    \[(\fL_r + \fO_{r})^2 = \begin{pmatrix} 
\fM^2 & \fM + \fF & I_{p^2} & 0 & \cdots & 0\\
0       & \fM^2 &  \fM + \fF & I_{p^2} & \cdots & 0\\
0       & 0     & \fM^2     & \fM + \fF & \ddots & 0\\
\vdots  & \vdots & \vdots   & \vdots    &\ddots & \cdots\\
0       & 0     & 0         & \fM^2     & \fM + \fF & I_{p^2}\\
0 & 0& 0& 0& \fM^2 & \fM + \fF\\
0 & 0& 0&0&0&\fM^2
\end{pmatrix}.
\]
For the rank of $\fL_r + \fO_r$: We may construct a similar matrix, which is a diagonal $p\times p$ matrix, with blocks of size $rp\times rp,$ by using a partition of the blocks of size $p\times p$. Each $pr\times pr$ diagonal entry we will call a string. Each string has $2r -1$ nonzero block entries of size $p\times p$. There are $r$ strings containing $\fE$, in the form
\[
\begin{pmatrix}
    \fE & \hspace{-0.5cm} I_p \\
    & \hspace{-0.5cm} (p-1)\fN_p & \hspace{-0.5cm} I_p \\
    && \hspace{-0.5cm} (p-2)\fN_p & \hspace{-0.5cm} I_p \\
    && \hspace{-0.5cm} \ddots & \hspace{-0.5cm} (p -(r -1))\fN
\end{pmatrix},
\begin{pmatrix}
    \fN & I_p \\
    & \fE & \hspace{-0.5cm} I_p \\
    && \hspace{-0.5cm} (p-1)\fN_p & \hspace{-0.5cm} I_p \\
    && \hspace{-0.5cm} \ddots & \hspace{-0.5cm} (p - (r-2))\fN_p
\end{pmatrix},
\]
\[
\begin{pmatrix}
    2\mathfrak{N_p} & I_p \\
    &  \mathfrak{N_p} & I_p \\
    && \mathfrak{E} & \hspace{-0.2cm} I_p \\
    &&\ddots& \hspace{-0.2cm}(p-(r-3))\mathfrak{N_p}
\end{pmatrix}, \, \dots\, ,
\begin{pmatrix}
    (r-1)\mathfrak{N}_p & I_p\\
    &\ddots & I_p\\
    &&\mathfrak{N}_p & I_p\\
    &&&\fE
\end{pmatrix}.
\]
The remaining $p-r$ strings, not containing $\fE$, are of the form
\[\begin{pmatrix}
    i\fN_p& \hspace{-0.5cm} I_p\\
    & \hspace{-0.5cm}(i -1)\fN_p & \hspace{-0.5cm}I_p\\
    &&\hspace{-0.5cm}(i - 2)\fN_p & I_p\\
    &&\ddots & \hspace{-0.5cm} (i - r + 1)\fN_p
\end{pmatrix}\]
for $i = r, \dots, p-1$. 
The rank of $\fL_r + \fO_r$ is the sum of the ranks of the $p$ strings, each a matrix of size $pr \times pr.$ 
Each of the strings not containing $\fE$ has rank $r(p-1).$ Each string containing $\fE$ has rank $(r-1)p.$ Thus the rank of $\fL_r + \fO_r$ is $(p-r)r(p-1) + r(r-1)p = rp^2 - 2rp + r^2,$ and the nullity of $\fL_r + \fO_r$ is $2rp - r^2.$

%For $r = 1$ we have $(\fL_1 + \fO_1)^2 = \fM^2$, and has rank $(p-2)^2$. 

For $r = 2$ the rank of $(\fL_2 + \fO_2)^2$ is $2(p-3)(p-2) + 2(p-1) + 1.$

For $r \ge 3,$ we find the rank of $(\fL_r + \fO_r)^2$ again using strings, which are still $r \times r$ block matrices, with blocks of size $p\times p$. We abbreviate $c(n) = n(n-1)$. There are $(r-1)$ strings containing $\fE$, each of the form
\[\begin{pmatrix}
    0 &\fE & I_p \\
     & 0& (p-1)\fN_p & I_p\\
     & &c(p-1)\fN^2_p &(p-2)\fN_p & \hspace{-0.8cm}\ddots\\
     & & & c(p-2)\fN^2_p & \hspace{-0.8cm}\ddots\\
     & & &\ddots & \hspace{-0.8cm}c(p-(r-1))\fN_p^2 & (p - (r-2))\fN_p\\
    &&&&&c(p-(r-2))\fN^2_p
\end{pmatrix},\]
\[\begin{pmatrix}
     2\fN_p^2& \fN_p& I_p \\
     & 0& \fE & I_p\\
     & &0 &(p-1)\fN_p & \hspace{-0.8cm}\ddots\\
     & & & c(p-1)\fN^2_p & \hspace{-0.8cm}\ddots\\
     & & &\ddots & \hspace{-0.8cm}c(p-(r-2))\fN_p^2 & (p - (r-3))\fN_p\\
    &&&&&c(p-(r-3))\fN^2_p
\end{pmatrix},\]
\[\dots, \begin{pmatrix}
    c(r-1)\fN^2_p & (r-2)\fN_p & I_p &\ddots\\
        & c(r-2)\fN^2 & (r-3)\fN_p & \ddots\\
        &\ddots& 2\fN_p^2 & \fN_p & I_0\\
        && &0 &\fE\\
        && & &0
\end{pmatrix}.\]
Each of these matrices has a rank of $(r-2)p.$

The remaining $p - (r-1)$ strings don't contain $\fE$.
Provided $r\le p-1$, there are $p-(r+1)$ such strings which have no zero blocks along the diagonal. 
These strings have a rank of $r(p-2).$ 
There are $2$ strings having a zero block either in the top left or bottom right entry. 
Still assuming $r \ge 3$, these types have a rank of $(r-1)(p - 1) - 1$ when $r$ is divisible by $3$, and $(r-1)(p-1)$ when $r$ is not divisible by $3$. 

If $p = r,$ the module $L_r$ is projective so we already know the Jordan decomposition. The rank of $(\fL_p +\fO_p)^2$ is hence $p^2(p-2).$

We have now the rank of $(\fL_r + \fO_r)^2$, for $r \le p$ is given by
\[\rho(r, p) = \begin{cases}
    2p^2 - 8p + 11 & r = 2,\\
    %(r-1)(r-2)p + 2((r-1)(p-1) - 1) + (p-(r+1))r(p-2)& 3 \le r  < p,\quad 3 \divides r,
    rp^2 - 2pr + r^2 - 4r & 3 \le r < p, \quad 3 \divides r,\\
    rp^2 - 2pr + r^2 - 4r + 2& 3 \le r  < p,\quad 3 \not\divides r,\\
    p^2(p-2)  & r=p.
\end{cases}\]
The nullity (i.e. the total number of Jordan blocks) of $(\fL_r + \fO_r)$ was calculated as $2rp - r^2$ for any $r,$ and $ (rp^2 - 2rp + r^2) - \rho(r, p)$ is giving the number of Jordan blocks of size $> 1.$ Hence $L_r$ contains $\tau(r, p)$ many $J_1$ summands, where 
\[\tau(r, p)= 4pr - 2r^2 - rp^2 + \rho(r, p) = \begin{cases}
    3                   & r = 2\\
    (2p-4)r - r^2      & 3 \le r < p,\quad 3 \divides r,\\
    (2p-4)r -  r^2 + 2  & 3 \le r < p, \quad 3 \not\divides r,\\
    0               & r = p = 3.
\end{cases}\]
The number of $J_1$ summands in the right hand side of the $1^{st}$ CGM polynomial for $\rho_i = L_i$ \ref{HeisPolyId}\[2L_1 \oplus 2L_2 \oplus \cdots \oplus 2 L_{p-1} \oplus L_p\] is therefore the sum
\[2\sum_{r = 2}^{p-1}\tau(r, p) =\begin{cases}
    6 & p =3\\
    36m^3 - 3m^2 - 35m + 20 & p = 3m+ 1\\
    36m^3 + 27m^2 - 29m + 10& p = 3m + 2.
\end{cases} \]
and is never equal to $9 + 4(p-3).$

We conclude the non-isomorphism \ref{HeisNonIso}, as the $1^{st}$ CGM polynomial identity is not witnessed by $x$ for $\rho_i = V_i^{\varphi^{-1}}$. Therefore
\[V\widetilde \otimes V \not\cong V\widetilde\otimes^{\varphi} V,\]
and so $\fg$ does not satisfy Property PA. 
\end{proof}

\begin{cor}
     For any $n \ge 1,$ the Heisenberg Lie algebra $\fg_n$ of dimension $2n + 1$ does not satisfy Property PA. 
\end{cor}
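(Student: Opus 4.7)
The plan is to reduce directly to the $n=1$ case established in Proposition~\ref{HeisenbergnPA}, using the same nilpotent element $t = x_1$ and extending the same twisting automorphism $\varphi$ to $u(\fg_n)$. I would \emph{not} try to invoke Lemma~\ref{generalAutomorphismExtension}, since $\fg_1$ is not a central subalgebra of $\fg_n$ for $n \ge 2$. The key observation that salvages the argument is that the element $x_1$ itself commutes with every generator of $\fg_n$ other than $y_1$, which turns out to suffice for the computations of Proposition~\ref{HeisenbergnPA} to carry through essentially verbatim.

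The first step is to extend $\varphi$ to $A_n := u(\fg_n)$ by the same formulas $\varphi(x_1) = x_1 + (y_1z)^{p-1}$, $\varphi(y_1) = y_1$, $\varphi(z) = z$, together with $\varphi(x_i) = x_i$ and $\varphi(y_i) = y_i$ for $i \ge 2$. Verifying that this respects the algebra relations is immediate: the only bracket touching $x_1$ is $[x_1, y_1] = z$, which is preserved since $[(y_1 z)^{p-1}, y_1] = 0$; and the $p$-power relation $\varphi(x_1)^p = 0$ takes place entirely in the Hopf-subalgebra $u(\fg_1) \subset A_n$, where it already holds. Setting $V = V_{1, \fg_n}(x_1)$ and $\fp = \fp(x_1) \in \Xscr(A_n)$, the support equality $\Xscr(A_n, V) = \{\fp\}$ follows from \ref{PBWsupport} using that $\fg_n$ is unipotent (it is nilpotent as a Lie algebra with every generator $p$-nilpotent, so $u(\fg_n)$ is local). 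The isotropy condition $\varphi \in \Omega(A_n, \fp)$ follows by transporting the $J_2$ summand argument of Proposition~\ref{HeisenbergnPA}: the elements $1 \otimes 1$ and $(y_1 z)^{p-1} \otimes 1$ of $V$ live inside the image of the natural map $u(\fg_1)\otimes_{u(\langle x_1\rangle)} k \hookrightarrow V$, and the identical computation shows they exhibit a $J_2$ summand of $V\!\downarrow^{A_n}_B$, where $B = \varphi(u(\langle x_1\rangle))$.

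The crux is a PBW sub-block reduction. Using a PBW basis of $A_n$ with $x_1$ placed last, each $V_{i, \fg_n}(x_1)$ has a $k$-basis of cosets of monomials $y_1^{j_1}\cdots y_n^{j_n}\, x_2^{i_2}\cdots x_n^{i_n}\, z^k\, x_1^{\ell}$. Since $x_1$ commutes with each $x_m, y_m$ for $m \ge 2$ and with $z$, for each fixed tuple of the ``extra'' exponents the $ip^2$-dimensional sub-block obtained by varying $j_1, k, \ell$ is invariant under both $x_1$ and $(y_1z)^{p-1}$, with action identical to that on $V_{i, \fg_1}(x_1)$ recorded in \ref{indForHeis}. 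This yields
\[V_{i, \fg_n}(x_1)^{\varphi^{-1}}\!\!\downarrow^{\fg_n}_{\langle x_1 \rangle}\ \cong\ p^{2(n-1)}\, L_i,\]
with $L_i$ as in the proof of Proposition~\ref{HeisenbergnPA}, and correspondingly $c^1_{i, \fg_n}(x_1) = p^{2(n-1)} c^1_{i, \fg_1}(x_1)$. After restricting to $\langle x_1 \rangle$, both sides of the candidate $1^{st}$ CGM polynomial identity for $\rho_i = V_{i, \fg_n}(x_1)^{\varphi^{-1}}$ become exactly $p^{4(n-1)}$ times their $\fg_1$ counterparts, so the $J_1$-multiplicity discrepancy isolated in Proposition~\ref{HeisenbergnPA} persists. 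By Definition~\ref{polyIds} the identity is not witnessed by $x_1$ and hence fails for $\fg_n$-modules; equivalently, $V \widetilde\otimes V \not\cong V\widetilde\otimes^\varphi V$, with $V \in \cC(\fp)$ and $\fp$ noble for both $\widetilde G_n$ and its twist $\widetilde G_n^\varphi$. Therefore $\fg_n$ fails Property PA. The only nontrivial step is checking the sub-block decomposition, which is routine once one notes that $x_1$, $y_1$ and $z$ commute with every additional generator of $\fg_n$.
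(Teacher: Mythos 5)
Your proof is correct, and it reaches the conclusion by a genuinely different route at the one place where the argument has real content. The paper proves the corollary by computing the Mackey coefficients $c^1_{i,\fg_1}(x_1)$, defining the same extension $\varphi_n$ of $\varphi_1$, checking $\varphi_n\in\Omega(A_n,\fp(x_1))$ by an induction--restriction base-change argument, and then invoking Lemma \ref{generalAutomorphismExtension} with the assertion that $\fg_1\subset\fg_n$ is central; you instead bypass that lemma and establish the needed multiplicative relation by hand, via the PBW sub-block decomposition showing that $x_1$ and $(y_1z)^{p-1}$ preserve each $ip^2$-dimensional block indexed by the exponents of $y_2,\dots,y_n,x_2,\dots,x_n$, whence $V_{i,\fg_n}(x_1)^{\varphi_n^{-1}}\!\!\downarrow_{\langle x_1\rangle}\cong p^{2(n-1)}L_i$ and $c^1_{i,\fg_n}(x_1)=p^{2(n-1)}c^1_{i,\fg_1}(x_1)$, so the $J_1$-multiplicity discrepancy from Proposition \ref{HeisenbergnPA} scales by $p^{4(n-1)}$ and persists. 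Your stated reason for avoiding the lemma is legitimate as the hypothesis is literally written: the paper defines ``central'' to mean $[\fg',\fg]=0$, and $\fg_1$ is not even abelian, so it cannot satisfy that condition inside $\fg_n$; what the lemma's proof actually uses is only that $W\!\uparrow_{\fg'}^{\fg}\downarrow_{\fg'}^{\fg}\cong[\fg:\fg']\,W$, which holds here because a PBW complement of $\fg_1$ in $\fg_n$ can be chosen inside the centralizer of $\fg_1$ (the $x_i,y_i$ with $i\ge 2$) --- and your sub-block computation is precisely an explicit proof of that fact. So the paper's route buys brevity and a reusable lemma (at the cost of a hypothesis that should be weakened from ``central'' to the centralizing-complement condition), while yours is self-contained and makes the isotropy claim $\varphi_n\in\Omega(A_n,\fp)$ and the non-projectivity of $V\!\downarrow_B$ immediate, since the restriction is visibly $p^{2(n-1)}$ copies of $L_1\cong 3J_2\oplus 2J_3\oplus\dots\oplus J_p$. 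All the auxiliary points you check (well-definedness of the extended $\varphi$, unipotence of $\fg_n$ giving $\Xscr(A_n,V)=\{\fp\}$ via \ref{PBWsupport}, nobility of $\fp$ for $\widetilde G_n$ and, by Lemma \ref{twisting}, for $\widetilde G_n^{\varphi_n}$, and the passage from the unwitnessed CGM identity of Definition \ref{polyIds} to $V\widetilde\otimes V\not\cong V\widetilde\otimes^{\varphi_n}V$) match the paper's logic.
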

\begin{proof}
    We have calculated the $1^{st}$ Mackey coefficients at $x = x_1 \in \fg_1$ as 
    \[c_{i, \fg_1}^1(x) = \begin{cases}
        2 & 1 \le i < p\\
        1 & i = p.
    \end{cases}\]
    For each $n$, define an augmented automorphism $\varphi_n$ of $A_n = u(\fg_n)$ by
    \begin{flalign*}
        \varphi_n : x_1 &\mapsto x_1 + (y_1z_1)^{p-1},\\
        x_i & \mapsto x_i \quad 2 \le i \le n,\\
        y_i &\mapsto y_i \quad 1 \le i \le n,
    \end{flalign*}
    hence $\varphi(z) = z.$
    Each $\fg_n$ contains $\fg_1$ as a central subalgebra, and $\varphi_{n}$ extends $\varphi_1.$ We see $\varphi_{n} \in \Omega(A_n, \fp(x_1) )$ for each $n$: since each $\fg_n$ is unipotent, we know $\Xscr(A_n, V_{1, \fg}(x_1)) = \{\fp(x_1)\},$ and we already showed that $V_{1, \fg_1}(x_1)^{\varphi_1^{-1}}\downarrow^{\fg_1}_{\langle x_1\rangle}$ is not projective, hence $\varphi_1^*(\fp(x_1)) = \fp(x_1).$ For general $n$ we have $V_{1, \fg_n}(x_1) = V_{1, \fg_1}(x_1)\uparrow_{\fg_1}^{\fg_n},$ so copying our base-change argument from Corollary \ref{automorphismExtension} we have $V_{1, \fg_n}(x_1)^{\varphi_n^{-1}} \cong V_{1, \fg_1}(x_1)^{\varphi_1^{-1}}\uparrow_{\fg_1}^{\fg_n}.$ Hence
    \begin{flalign*}
        V_{1, \fg_n}(x_1)^{\varphi_n^{-1}}\downarrow^{\fg_n}_{\langle x_1 \rangle} &\cong V_{1, \fg_n}(x_1)^{\varphi_n^{-1}}\downarrow^{\fg_n}_{\fg_1}\downarrow^{\fg_1}_{\langle x_1 \rangle}\\
        &\cong V_{1, \fg_1}(x_1)^{\varphi_1^{-1}}\uparrow_{\fg_1}^{\fg_n}\downarrow^{\fg_n}_{\fg_1}\downarrow^{\fg_1}_{\langle x_1 \rangle}\\
        &\cong[\fg_n : \fg_1]V_{1, \fg_1}(x_1)^{\varphi_1^{-1}}\downarrow^{\fg_1}_{\langle x_1\rangle},
    \end{flalign*}
    so $\varphi^{*}_{n}(\fp(x_1)) = \fp(x_1).$

    By Proposition \ref{HeisenbergnPA}, we have that the $1^{st}$ CGM polynomial identity is not witnessed by $x_1$ for $\rho_i = V_{i, \fg_1}(x_1)^{\varphi_1^{-1}}.$ By Lemma \ref{generalAutomorphismExtension}, we then have that the $1^{st}$ CGM polynomial identity is not witnessed by $x_1$ for $\rho_i = V_{i, \fg_n}(x_1)^{\varphi_n^{-1}}.$

    Therefore, $V = V_{1, \fg_n}(x_1)$ is a representation of $\fg_n$, with $\Xscr(A_n, V) = \{\fp(x_1)\},$ such that
    \[V\widetilde\otimes V \not\cong V \widetilde\otimes^{\varphi_n} V.\]
    Since $\fp(x_1)$ is noble for $\widetilde G_n$ and $\widetilde G_n^{\varphi_n}$ (the infinitesimal group scheme for $\fg_n$ and its twist), we have that $\fg_n$ does not satisfy Property PA. 
    
\end{proof}

\printbibliography
\end{document}